\documentclass[12pt,reqno]{amsart}
\usepackage[a4paper,lmargin=3cm,rmargin=3cm,tmargin=3.5cm,bmargin=3.5cm]{geometry}
\usepackage[centertags]{amsmath}
\usepackage[english]{babel}
\usepackage{amsfonts}
\usepackage{amssymb}
\usepackage{amsthm,mathtools}
\usepackage{mathrsfs}
\usepackage{amsrefs}
\usepackage{pifont}
\usepackage[dvipsnames]{xcolor}
\usepackage{graphicx}
\usepackage{comment}
\usepackage{float}

\usepackage{hyperref}
\hypersetup{colorlinks=true, linkcolor=MidnightBlue, citecolor=teal, urlcolor=cyan}

\usepackage{tikz-cd}
\usepackage[T1]{fontenc}
\usetikzlibrary{patterns}
\usetikzlibrary{decorations.markings}
\tikzset{
  tachado/.style={
    decoration={
      markings,
      mark=at position 0.5 with {\node[ font=\sffamily] {X};}
    },
    postaction={decorate}
  }
}

\usepackage[shortlabels]{enumitem} 
\usepackage{bm}

\setlist[enumerate,1]{label=\textup{(\alph*)}}
\setlist[enumerate]{leftmargin=*, widest=ii}
\usepackage{pifont}

\newcommand{\ext}{\operatorname{ext}}
\newcommand{\str}{\operatorname{str-exp}}
\newcommand{\wexp}{w^*\operatorname{-exp}}
\newcommand{\wstr}{w^*\operatorname{-str-exp}}
\newcommand{\conv}{\operatorname{conv}}
\newcommand{\cconv}{\overline{\operatorname{conv}}}

\DeclareMathOperator{\inte}{int}
\DeclareMathOperator{\ainte}{a-int}
\DeclareMathOperator{\supp}{supp}

\DeclareMathOperator{\sgn}{sgn}
\DeclareMathOperator{\spann}{span}

\newcommand{\e}{\varepsilon}
\newcommand{\N}{\mathbb{N}}
\newcommand{\Z}{\mathbb{Z}}
\newcommand{\R}{\mathbb{R}}
\newcommand{\cal}[1]{\mathcal{#1}}
\newcommand{\B}{\mathcal{B}}
\newcommand{\D}{\mathcal{D}}

\newcommand{\cut}{\mathord{\upharpoonright}}
\newcommand{\vertt}[1]{{\left\vert\kern-0.25ex\left\vert\kern-0.25ex\left\vert #1 \right\vert\kern-0.25ex\right\vert\kern-0.25ex\right\vert}}
\newcommand{\n}{\left\Vert\cdot\right\Vert}
\newcommand{\nn}{\vertt{\cdot}}
\newcommand{\bone}{\text{\usefont{U}{bbold}{m}{n}1}}
\renewcommand{\leq}{\leqslant}
\renewcommand{\geq}{\geqslant}

\theoremstyle{plain}
\newtheorem{theorem}{Theorem}[section]
\newtheorem{corollary}[theorem]{Corollary}
\newtheorem{proposition}[theorem]{Proposition}
\newtheorem{lemma}[theorem]{Lemma}
\newtheorem{fact}[theorem]{Fact}
\newtheorem*{theorem*}{Theorem}
\newcounter{nstep}
\newtheorem{step}[nstep]{Step}
\AddToHook{env/proof/begin}{\setcounter{nstep}{0}}

\newcounter{maintheorem}

\newtheorem{mainth}[maintheorem]{Theorem}

\numberwithin{equation}{section}

\theoremstyle{definition}
\newtheorem{example}[theorem]{Example}
\newtheorem{defi}[theorem]{Definition}

\theoremstyle{remark}
\newtheorem{rem}[theorem]{Remark}

\makeatletter
\renewcommand{\tocsection}[3]{%
	\indentlabel{\@ifnotempty{#2}{\bfseries\ignorespaces#1 #2\quad}}\bfseries#3}
\renewcommand{\tocsubsection}[3]{%
	\indentlabel{\@ifnotempty{#2}{\ignorespaces#1 #2\quad}}#3}

\newcommand\@dotsep{4.5}
\def\@tocline#1#2#3#4#5#6#7{\relax
	\ifnum #1>\c@tocdepth 
	\else
	\par \addpenalty\@secpenalty\addvspace{#2}%
	\begingroup \hyphenpenalty\@M
	\@ifempty{#4}{%
		\@tempdima\csname r@tocindent\number#1\endcsname\relax
	}{%
		\@tempdima#4\relax
	}%
	\parindent\z@ \leftskip#3\relax \advance\leftskip\@tempdima\relax
	\rightskip\@pnumwidth plus1em \parfillskip-\@pnumwidth
	#5\leavevmode\hskip-\@tempdima{#6}\nobreak
	\leaders\hbox{$\m@th\mkern \@dotsep mu\hbox{.}\mkern \@dotsep mu$}\hfill
	\nobreak
	\hbox to\@pnumwidth{\@tocpagenum{\ifnum#1=1\bfseries\fi#7}}\par
	\nobreak
	\endgroup
	\fi}
\AtBeginDocument{%
	\expandafter\renewcommand\csname r@tocindent0\endcsname{0pt}
}
\def\l@subsection{\@tocline{2}{0pt}{2.5pc}{5pc}{}}
\makeatother

\makeatletter
\g@addto@macro\bfseries{\boldmath} 
\makeatother

\title[The structural theorem and locally finite tilings]{Polyhedral normed spaces: the structural theorem and locally finite tilings}

\author[C.A.~De~Bernardi]{Carlo Alberto De Bernardi}
\address[C.A.~De~Bernardi]{Dipartimento di Matematica per le Scienze economiche, finanziarie ed attuariali, Universit\`a Cattolica del Sacro Cuore, 20123 Milano, Italy\newline
\href{https://orcid.org/0000-0002-9654-1324}{ORCID: \texttt{0000-0002-9654-1324}}}
\email{carloalberto.debernardi@unicatt.it, carloalberto.debernardi@gmail.com}

\author[H.~del~R\'{\i}o]{Helena del R\'{\i}o}
\address[H.~del~R\'{\i}o]{Department of Mathematical Analysis and Institute of Mathematics (IMAG), University of Granada, E-18071 Granada, Spain \newline
\href{https://orcid.org/0009-0004-5078-6993}{ORCID: \texttt{0009-0004-5078-6993}}}
\email{helenadelrio@ugr.es}

\author[T.~Russo]{Tommaso Russo}
\address[T.~Russo]{Universit\"{a}t Innsbruck, Department of Mathematics, Technikerstra\ss e 13, 6020 Innsbruck, Austria \newline
\href{https://orcid.org/0000-0003-3940-2771}{ORCID: \texttt{0000-0003-3940-2771}}}
\email{tommaso.russo@uibk.ac.at, tommaso.russo.math@protonmail.com}

\author[J.~Somaglia]{Jacopo Somaglia}
\address[J.~Somaglia]{Politecnico di Milano, Dipartimento di Matematica, Piazza Leonardo da Vinci 32, 20133 Milano, Italy \newline
\href{https://orcid.org/0000-0003-0320-3025}{ORCID: \texttt{0000-0003-0320-3025}}}
\email{jacopo.somaglia@polimi.it}

\subjclass[2020]{Primary 46B20, 52A05; Secondary 51M20, 05B45.}
\keywords{Polyhedral normed space; Structural theorem; True face; Algebraic true face; Locally finite tiling; Boundary; Normed space of countable dimension.}

\begin{document}

\begin{abstract} A fundamental result due to Fonf (1981) asserts that the unit sphere of every polyhedral Banach space is covered by true faces of the unit ball. The principal aim of our paper is to study the validity of the same result for polyhedral normed spaces and present some applications. Our first main result is that if $X$ is a polyhedral normed space with property ($\Delta$), then its unit sphere is covered by algebraic true faces. It then follows that if the space is additionally (VI)-polyhedral, then its unit sphere is covered by genuine true faces. We also present several counterexamples showing that these results are optimal in a strong sense. For instance, we show that there exist (V)-polyhedral normed spaces whose unit ball doesn't have any algebraic true face at all, and that there exist polyhedral normed spaces with ($\Delta$) whose unit sphere is not covered by true faces. Further, we give an example of a polyhedral normed space such that the set of strongly exposed points of the unit ball is dense in the sphere, and some results and counterexamples concerning boundaries of polyhedral normed spaces. Our principal application of this material involves locally finite tilings of normed spaces, and we show that a normed space admits a locally finite tiling (by bounded convex bodies) if and only if it admits a (VI)-polyhedral norm with ($\Delta$). In particular, such a tiling exists in every polyhedral Banach space with ($\Delta$), which generalises a result of Fonf.
\end{abstract}
\maketitle
\tableofcontents

\section{Introduction}
A closed, bounded convex set $C$ in $\R^n$ is a \emph{polytope} if it has finitely many extreme points; equivalently, it is a finite intersection of closed half-spaces. Polytopes are ubiquitous objects that have found applications in several areas of mathematics, such as algebraic topology and algebraic geometry, combinatorics, and optimisation, just to name a few. We refer, \emph{e.g.}, to \cites{Coxeter, Grunbaum, HRGZ, Kaibel, Ziegler} for introductions to the study of polytopes and their applications. It is therefore not surprising that several attempts have been made to define polytopes also in infinite dimensions, \cites{AmirDeutsch, DP-Modena, DP-Rocky, InfPoly, Klee60}. The most successful definition is the one given by Klee in \cites{Klee60} (see also \cite{FLP_handbook}*{Section~6}): a closed, bounded convex set $C$ in a normed space $X$ is a \emph{polytope} if every finite-dimensional section of $C$ is a (finite-dimensional) polytope. A normed space $X$ is \emph{polyhedral} if its unit ball is a polytope. By a deep result of Klee \cite{Klee_2dim}*{Theorem~4.7}, $X$ is polyhedral if and only if the unit ball of every $2$-dimensional subspace of $X$ is a polygon. The simplest and most important example of a polyhedral Banach space is the space $c_0$. Conversely, it is clear that strictly convex or smooth normed spaces are not polyhedral, hence no $L_p(\mu)$, $1<p <\infty$, is polyhedral. Further, $\ell_1$ contains a $2$-dimensional smooth subspace \cites{KadetsFonf, Lind_ell1_smooth}, thus an $L_1(\mu)$ space is polyhedral if and only if it is finite dimensional. (The fact that $\ell_1$ is not polyhedral also follows from Lindenstrauss' result that no dual space is polyhedral, \cite{Lindenstrauss}.) Moreover, it is a folklore fact that $c$ is not polyhedral (see, \emph{e.g.}, \cite{GMZ_poly}, \cite{FPST1}*{Lemma~3}, or Example~\ref{ex: c not poly}), so a $C(K)$ space is polyhedral if and only if $K$ is finite.

Despite the seemingly innocent finite-dimensional definition, it turns out that polyhedral Banach spaces admit extremely strong structural properties (both of isometric and of isomorphic nature). To begin with, each polyhedral Banach space $X$ admits a boundary $\B$ of cardinality equal to $\textup{dens}(X)$, which also satisfies $B_{X^*}= \cconv(\B)$, \cite{fonfstruttnew}*{Theorems~1.4 and~3.9}. As a consequence, $\textup{dens}(X)= \textup{dens}(X^*)$ and, in particular, every polyhedral Banach space is an Asplund space. Moreover, every polyhedral Banach space is $c_0$-saturated, \cite{fonfstrutt}; as a consequence, no dual space is isomorphically polyhedral. Notice however that there are $c_0$-saturated (separable) Asplund spaces that are not isomorphically polyhedral, \cite{Leung}. In the separable case even stronger results are available. In fact, as a consequence of the existence of a countable boundary, separable polyhedral Banach spaces admit a $C^\infty$-smooth LFC norm \cite{Hajek_PAMS}, as well as an analytic norm \cite{devillefonfhajek}*{Corollary~3.2}. Furthermore, a separable Banach space admits a polyhedral norm if and only if it admits an LFC norm \cites{fonftiling, Hajek_PAMS}, if and only if it admits a locally finite tiling by bounded convex bodies \cite{fonftiling}*{Theorem~2}.

Let us also mention in passing that polyhedral Banach spaces have also been considered in connection to several topics in Banach space theory, such as extensions of compact operators \cites{Lazar, Lind_Memoires}, proximinality \cites{FonfLindVes, GodefroyIndu}, $L_1$-preduals \cites{CMPP, CMPV, CP_LindPoly}, extremal structure \cites{Schreier, DEPOLY, GMZ_poly}, Orlicz spaces \cites{HJ_Orlicz, Leung}, twisted sums \cites{CP_Hepheastus, CS_Twist1, CS_Twist2}, projection constants \cite{Basso}, diameter two properties \cite{GinesAbraham}. There also are several results in the literature aimed at constructing more examples or classes of polyhedral Banach spaces. For instance, Fonf \cite{Fonf_ordinals} proved that the Banach space $C([0,\alpha])$ admits a polyhedral norm, for each ordinal $\alpha$ (see also \cite{FPST1}*{Corollary~8}). Some more general methods to construct polyhedral norms in non-separable Banach spaces have been developed in \cites{AFST, FPST1, FPST2, Smith}. There even are examples of polyhedral Banach spaces that admit quotients isomorphic to $\ell_p$, $1<p <\infty$, \cites{Gasparis, GasparisII}.

As it turns out, essentially all the results mentioned in the previous paragraphs depend, explicitly or implicitly, upon a fundamental result due to Fonf \cites{fonfstrutt, fonfstruttnew} (see also \cites{FLP_handbook, veselystrutt} or \cite{HJ}*{Chapter~5, Theorem~103} for alternative proofs), that is nowadays known as the `structural theorem' and whose statement we recall here.

\begin{theorem*}[The structural theorem, Fonf \cites{fonfstrutt, fonfstruttnew}] If $X$ is a polyhedral Banach space, $S_X$ is covered by true faces of $B_X$. As a consequence, the set 
\[ \B_0\coloneqq \{f\in B_{X^*}\colon f^{-1}(1)\cap B_X \text{ is a true face}\} \]
is a boundary for $X$, which is contained in each boundary of $X$ and it coincides with $\wstr(B_{X^*})$. Further, $|\B_0|= \textup{dens}(X)$ and $B_{X^*}= \cconv(\B_0)$.
\end{theorem*}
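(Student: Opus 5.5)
The plan is to first prove the covering statement by a Baire category argument, and then derive the properties of $\B_0$ from it. Recall that a true face is a face $F=f^{-1}(1)\cap B_X$, with $f\in S_{X^*}$, that has nonempty interior relative to the hyperplane $f^{-1}(1)$.

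\emph{Local conical structure.} Fix $x\in S_X$ and let $D(x)=\{f\in S_{X^*}\colon f(x)=1\}$. Put $d(y)=\max_{f\in D(x)}f(y)$; this is the directional derivative of the norm at $x$, and it is a continuous sublinear function. Apply Klee's $2$-dimensional characterisation to $\spann\{x,y\}$: its unit ball is a polygon, so for each $y\in X$ there is $t_y>0$ with
\[ \|x+ty\|=1+t\,d(y) \quad\text{for } t\in[0,t_y]. \]
For $n\in\N$ define the closed sets
\[ A_n=\{y\in X\colon \|x+ty\|=1+t\,d(y)\ \text{for all}\ t\in[0,1/n]\}. \]
These cover $X$. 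Since $X$ is complete, the Baire theorem gives some $A_n$ containing a ball $U(y_0,r)$. Hence on the open cone generated by $x+\tfrac1n U(y_0,r)$ the norm coincides with $1+d$ in the sense above, so locally the sphere near such points is the graph of $d$ over a piece of the tangent cone.

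\emph{Extracting a single functional.} Next I would show that $d$ is itself a ``polyhedral'' sublinear function: by the $2$-dimensional argument it is piecewise linear on every plane. I would then run the same Baire argument for $d$ around $y_0$. The aim is a smaller open set $W\subseteq U(y_0,r)$ on which $d$ coincides with a single $f\in D(x)$, namely a functional maximising $d$ at a suitable point of $W$. Then $\|x+ty\|=1+tf(y)$ for $y\in W$ and small $t$, so $f^{-1}(1)\cap B_X$ has nonempty relative interior in $f^{-1}(1)$. Finally $x$ is recovered as a limit of points $x+ty$ of this face, and since faces are closed, $x$ lies in the true face.

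This passage from the finitely-many-pieces structure in each plane to a single functional on an open set is the step I expect to be the main obstacle. Iterating the Baire argument might a priori never stabilise. What I would try first is to choose $y_0$ so that the face of $D(x)$ attaining $d(y_0)$ is minimal, using w$^*$-compactness of $D(x)$. I would then show by the planar polygon argument that this minimal face is a singleton on an open set.

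\emph{Consequences.} Each true face $F$ is exposed by a unique $f$, and points in the relative interior of $F$ have $f$ as their unique supporting functional. Hence every boundary of $X$ contains $f$, so $\B_0$ is contained in each boundary. Since the true faces cover $S_X$, $\B_0$ is itself a boundary. Relative interiority also yields that $f$ is w$^*$-strongly exposed by any relative interior point of $F$, so $\B_0\subseteq\wstr(B_{X^*})$. Conversely, a w$^*$-strongly exposed functional lies in every boundary, and therefore in $\B_0$.

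The relative interiors of distinct true faces give disjoint nonempty relatively open subsets of $S_X$, which yields $|\B_0|\leq\textup{dens}(X)$. The reverse inequality holds because $\B_0$ separates the points of $X$, so $\textup{dens}(X)\leq|\B_0|$ when $X$ is infinite-dimensional.

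Finally, for $B_{X^*}=\cconv(\B_0)$, I would note that distinct elements of $\B_0$ are uniformly separated locally: if $f\neq g$ are in $\B_0$ and $x$ is a relative interior point of the face of $f$, then $g(x)<1-\delta$. I would then combine this with a Rainwater/Simons-type argument, namely that a boundary which is norm-discrete in this sense has norm-closed convex hull equal to $B_{X^*}$.
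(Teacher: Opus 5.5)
\textbf{Gap 1: the key step is missing.} Your Step~1 is sound. The sets $A_n$ are closed, they cover $X$ by planar polyhedrality, and Baire gives an open set $x+\frac1n U(y_0,r)$ on which $\|\cdot\|=\max_{f\in\D(x)}f$. Likewise, a single $f$ with $d=f$ on an open $W\subseteq U(y_0,r)$ does put $x$ in a true face. The gap is the step you flag yourself, which is the heart of Fonf's theorem, and your proposed remedy does not work.

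Put $F_y\coloneqq\{f\in\D(x)\colon f(y)=d(y)\}$. Weak$^*$-compactness of $\D(x)$ gives minimal faces of $\D(x)$ (e.g.\ extreme points). It does not give a point $y$ of your open set whose active face $F_y$ is minimal among the $F_z$. A decreasing chain $(F_{y_\alpha})$ need not have a lower bound of this form: $y\mapsto F_y$ is only upper semicontinuous, so the active face can jump up at a limit of the $y_\alpha$.

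Moving along edges of planar sections shrinks the face one step at a time: for small $t>0$, $F_{y+tv}\subseteq\{f\in F_y\colon f(v)=\max_{g\in F_y}g(v)\}$. This is exactly the descent in Theorem~\ref{thm: structural in normed space}, and there it terminates only because of ($\Delta$). Polyhedral Banach spaces need not have ($\Delta$): in Banach spaces (K)+($\Delta$) already gives (VI), which is strictly stronger than (K).

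Moreover, planar arguments alone never give an open set. A singleton active face only yields algebraic interior (cf.\ Example~\ref{ex: (K) + (Delta) not (VI)}), and you need a second use of completeness: Baire applied to the closed convex cone $\{d=f\}$. The paper does not reprove this theorem; it cites Fonf and Vesel\'y, whose arguments use differentiation-type tools.

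\textbf{A repair in the separable case.} You can close the gap with a nested descent, as follows.
\begin{enumerate}
\item The sets in your Baire step are star-shaped ($v\in A_n\Rightarrow sv\in A_n$ for $s\in[0,1]$). So the same Baire argument for $d$ at any point $z$ gives open sets arbitrarily close to $z$ on which every active face is contained in $F_z$.
\item Starting inside $U(y_0,r)$, alternate a small move along the $j$-th term of a dense sequence $(v_j)$ with such a Baire step. Nest the open sets with closures inside and diameters tending to $0$.
\item The limit point $u$ has active face contained in faces whose elements agree on every $v_j$, so $F_u=\{f\}$.
\item Planar polyhedrality gives $u\in\ainte\{d=f\}$, and Baire upgrades this to an interior point, which is your $W$.
\end{enumerate}
For non-separable $X$ this descent only makes the active functionals agree on a separable subspace. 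A further argument is then needed, e.g.\ separable reduction of Fr\'echet differentiability of convex functions combined with the separable case.

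\textbf{Gap 2: the consequences for cardinality and for $\cconv(\B_0)$.} The inference ``$\B_0$ separates points, hence $\textup{dens}(X)\leq|\B_0|$'' is invalid. The coordinate functionals separate the points of $\ell_\infty$; compare also $\ell_\infty^F$ in Example~\ref{ex: ell_Infty^F}.

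The ``uniform local separation'' of $\B_0$ is false in polyhedral Banach spaces. On $c_0$ set
\[ \|y\|\coloneqq\max\{\|y\|_\infty,\sup_{n\geq2}|h_n(y)|\},\qquad h_n\coloneqq(1-\alpha_n)e_1^*+2\alpha_ne_n^*, \]
with $\alpha_n\in(0,\frac12)$ and $\alpha_n\to0$.
\begin{enumerate}
\item On a finite-dimensional $E$ one has $|y_n|\leq\frac12\|y\|_\infty$ for large $n$, hence $|h_n(y)|\leq\|y\|_\infty$. Since $c_0$ is polyhedral, every section is cut out by finitely many functionals, so this space is polyhedral.
\item $\|\cdot\|=e_1^*$ on a ball around $e_1$, and $\|\cdot\|=h_n$ near $e_1+e_n$, so $e_1^*,h_n\in\B_0$.
\item Yet $h_n(e_1)=1-\alpha_n\to1$ and $h_n\to e_1^*$ in norm.
\end{enumerate}
Also, a uniformly discrete boundary need not satisfy $\cconv(\B)=B_{X^*}$: take $\{\pm\delta_t\}$ in $C[0,1]$.

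The identity $B_{X^*}=\cconv(\B_0)$ needs genuine boundary theory. In the separable case, $\B_0$ is countable by your disjointness argument, and Simons' inequality shows that a countable boundary of a Banach space has norm-closed convex hull $B_{X^*}$. Once that identity is available, $\textup{dens}(X)\leq\textup{dens}(X^*)\leq|\B_0|$ gives the missing inequality.
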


Despite all the above-mentioned results concerning polyhedral Banach spaces, there is no obvious reason why one should include completeness in the definition of polyhedrality; for instance, all the notions of polyhedrality studied in \cites{DP-Modena, DP-Rocky} are defined for normed spaces. It is therefore natural to wonder to what extent the structural theory remains valid in absence of completeness, in the spirit, \emph{e.g.}, of \cites{dantashajekrusso, devillefonfhajek, Hajek_PAMS, HR_JFA}. The discussion in the previous paragraphs indicates that the first result to consider in this regard is undoubtedly the structural theorem. The second main motivation for us was Fonf's tiling result \cite{fonftiling}: in fact, Fonf's argument uses (IV)-polyhedrality in his proof, also known as property ($*$), and part of our motivation was also to understand the exact type of polyhedrality required, for possible extensions to non-separable spaces.

Interestingly, when considering the validity of the structural theorem for polyhedral normed spaces, two significant differences from the complete case appear. First, one has to distinguish between true faces (sets of the form $B_X\cap H$, where $H$ is a supporting hyperplane, that have non-empty relative interior in $H$) and algebraic true faces (where the topological interior is replaced by the algebraic one). Second, the result is no longer true for all polyhedral spaces and stronger notions of polyhedrality ought to be considered. (All the polyhedrality notions relevant for our paper, as well as undefined terminology mentioned in this section, will be defined in Section~\ref{sec: prelim}.) The result we obtain reads as follows.

\begin{mainth}\label{mth: structure thm} If $X$ is a \textup{(K)}-polyhedral normed space with property \textup{($\Delta$)}, then $S_X$ is covered by algebraic true faces of $B_X$. Further, if $X$ is additionally \textup{(VI)}-polyhedral, then $S_X$ is covered by true faces of $B_X$.
\end{mainth}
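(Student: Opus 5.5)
The plan is to fix $x\in S_X$ and build, by hand, a supporting functional $f$ with $f(x)=1$ whose face $F\coloneqq f^{-1}(1)\cap B_X$ has non-empty algebraic interior relative to the affine hyperplane $H\coloneqq f^{-1}(1)$. Then $x\in F$ and $F$ is an algebraic true face. Write $D(x)=\{g\in S_{X^*}\colon g(x)=1\}$ for the set of supporting functionals at $x$.

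\textbf{Step 1: choosing $f$.} The first step uses property ($\Delta$) to reduce $D(x)$ to a finite object. The form in which I expect to use ($\Delta$), to be checked against the definition in Section~\ref{sec: prelim}, is this: for each $x\in S_X$ there are finitely many functionals $f_1,\dots,f_n\in D(x)$ and some $\delta>0$ such that
\[ \|y\|=\max_{i}f_i(y) \quad\text{for every } y \text{ with } \|y-x\|<\delta , \]
so that near $x$ the norm is determined by finitely many active functionals. Taking $f_1,\dots,f_n$ minimal, Klee polyhedrality applied to finite-dimensional sections through $x$ should show that $D(x)=\conv\{f_1,\dots,f_n\}$. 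I then set $f=\frac1n\sum_i f_i$, a point of the relative interior of $D(x)$. With this choice $H\cap B_X=\bigcap_i f_i^{-1}(1)\cap B_X$, since $f(y)=1$ with $\|y\|\leq 1$ forces $f_i(y)=1$ for all $i$.

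\textbf{Step 2: algebraic interior.} I look for a point $z\in F$ that is algebraically interior to $F$ within $H$. The natural candidate is $z=x$ itself when $n=1$. In general I would pick $z$ in the "relative interior" of the finite face $\bigcap_i f_i^{-1}(1)\cap B_X$, obtained by perturbing $x$ inside $\bigcap_i\ker f_i$ in a controlled way. For every direction $h\in\ker f$, I restrict to the finite-dimensional subspace $E=\spann\{x,z,h\}$. By (K)-polyhedrality, $B_X\cap E$ is a polytope, and $F\cap E$ is a face of it. On the $\delta$-neighbourhood given by ($\Delta$) only $f_1,\dots,f_n$ are active, and $f$ was chosen in the relative interior of their convex hull. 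Together these should give that, for small $t>0$, the points $z\pm th$ lie in $B_X$ and satisfy $f(z\pm th)=1$, that is, $z\pm th\in F$. The key requirement is that the admissible $t$ depends on $h$ but that $z$ does not. This uniformity in $z$ is exactly what the $\delta$ of ($\Delta$) provides, and the finite-dimensional polytope structure alone would not give it.

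\textbf{Step 3: the (VI) case.} Assuming in addition (VI)-polyhedrality, I want to upgrade "for each $h$ some $t(h)>0$" to a single $t>0$ working for all $h$ in the unit ball of $\ker f$, which yields a topological interior point of $F$ in $H$. The plan is to use the (VI) condition, which I expect to say roughly that the functionals in $B_{X^*}$ not supporting at $z$ stay uniformly below $1-\eta$ at $z$. Then $\max_i f_i(z+th)=1$ together with $\|z+th\|\leq 1$ holds as soon as $t\|h\|<\eta/2$, and this estimate is uniform in $h$.

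\textbf{Main obstacle.} I expect the hardest part to be the middle of Step 2: proving that $F$ contains a point of algebraic interior at all, rather than merely being a face of each finite-dimensional section. Klee polyhedrality controls each section separately, but it does not prevent the faces $F\cap E$ from degenerating, in the sense of losing dimension in $H\cap E$, as $E$ varies. My first attempt at this would be a contradiction argument: suppose some direction $h$ fails. Then I would build a finite-dimensional section in which $x$ is approached by points where a functional outside $\{f_1,\dots,f_n\}$ becomes active, and show that this contradicts ($\Delta$).
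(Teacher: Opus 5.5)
Your Step~2 fails for every choice of $z$ whenever $\D(x)$ is not a singleton, because the barycentre $f=\frac1n\sum_i f_i$ never exposes an algebraic true face when $n\geq 2$. You correctly note that $F=\bigcap_i f_i^{-1}(1)\cap B_X$. Since some $f_i$ differs from $f$ and $f_i(x)=f(x)=1$, that $f_i$ is not a multiple of $f$, so there is $h\in\ker f$ with $f_i(h)\neq 0$. As $\sum_j f_j(h)=nf(h)=0$, some $f_j(h)>0$. Then $\|z+th\|\geq f_j(z+th)=1+tf_j(h)>1$ for every $z\in F$ and every $t>0$. Equivalently, by \ref{item: alg true face}$\implies$\ref{item: singleton} of Proposition~\ref{prop: poly VS smooth}, an algebraic interior point $z$ would satisfy $\D(z)=\{f\}$, whereas $f_1,\dots,f_n\in\D(z)$. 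Already at the vertex $(1,1)$ of the unit square of $\ell_\infty^2$, $\frac12(e_1^*+e_2^*)$ exposes only that vertex. So what you call the main obstacle is a false claim, not a technical difficulty.

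The missing idea is that the face must be exposed by an extreme point of $\D(x)$, with $x$ on its relative boundary; the algebraic interior point is some other $x'$ with $\D(x')=\{f\}\subseteq\D(x)$. The paper finds $x'$ by induction on $|\ext \D(x)|$, which is finite by ($\Delta$). If $\D(x)$ is not a singleton, Proposition~\ref{prop: poly VS smooth} gives a $2$-dimensional $Y$ in which $x$ is a vertex of the polygon $B_Y$, with edges $[x,v_1],[x,v_2]$ exposed by $g_1,g_2$. Extend $g_1$ to $f_1\in\ext \D_X(x)$ (Fact~\ref{fact: basic D(x)}\ref{i: extend extreme}). Then each $x_2\in(x,v_2)$ has $\D(x_2)\subseteq\D(x)$ and $f_1\notin\D(x_2)$, so $|\ext \D(x_2)|<|\ext \D(x)|$. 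Iterating reaches $x'$ with $\D(x')=\{f\}$, $f\in\D(x)$, and Proposition~\ref{prop: poly VS smooth} (where (K)-polyhedrality enters) gives $x'\in\ainte_H(f^{-1}(1)\cap B_X)$, a face containing $x$.

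You are also using stronger hypotheses than you have. Property ($\Delta$) only says $\ext \D(x)$ is finite. Your formula $\|y\|=\max_i f_i(y)$ near $x$ is the $\Delta$-QP property; holding at every point, it is equivalent to (VI)-polyhedrality plus ($\Delta$) (Lemma~\ref{lemma: Delta-QP pts iff VI+Delta}), and it fails in (K)-polyhedral spaces with ($\Delta$). In Example~\ref{ex: (K) + (Delta) not (VI)}, $\D(e_1)=\{e_1^*\}$, yet $\|e_1+te_m\|\geq h_m(e_1+te_m)=\frac{1+t}{1+\e_m}>1=e_1^*(e_1+te_m)$ for $t>\e_m$, while $\|te_m\|=t$; so no $\delta$ works. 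This matters: your Step~2 would give a $t$ uniform in $h$, i.e.\ genuine true faces, which Proposition~\ref{prop: c_00 renorming}\ref{i: faces don't cover} rules out. In the first statement $t$ must depend on $h$, and it comes only from the polygon $B_{\spann\{x',h\}}$. Under your reading, that is, under (VI) and ($\Delta$), the right move is to drop the barycentre: the finitely many closed sets $f_i^{-1}(1)\cap S_X$ cover a neighbourhood of $x$ in $S_X$, so one has non-empty relative interior, as in Lemma~\ref{lemma: DeltaQP iff locally finite faces}. That recovers only the second statement.

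Finally, the uniform gap you attribute to (VI) in Step~3 is property (V), which is strictly stronger than (VI); for all of $B_{X^*}$ it is simply false. The correct use of (VI) is Lemma~\ref{lemma: VI iff D(y)}. Since $\D(x')=\{f\}$, there is a neighbourhood $V$ of $x'$ with $\D(y)\subseteq\{f\}$, hence $f(y)=1$, for all $y\in V\cap S_X$. Therefore $x'\in\inte_{S_X}(F)=\inte_H(F)$ by Remark~\ref{rem: true faces}\ref{item: interiors coincide}.
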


As a particular case, since (IV)-polyhedral normed spaces have property ($\Delta$), we obtain that the unit sphere of a (IV)-polyhedral normed space is covered by true faces. As a consequence of this version of the structural theorem, we obtain some results of isomorphic nature, in absence of completeness: for instance, if $X$ is a separable (VI)-polyhedral normed space with ($\Delta$), then $X$ has a $C^\infty$-smooth and LFC norm, as well as an analytic norm. Both results follow from the fact that such an $X$ admits a norm with countable boundary (Corollary~\ref{cor: C^infty renorm}). Further, in the same section we also prove that, for a polyhedral normed space $X$, $\wexp(B_{X^*})$ is a boundary if and only if $S_X$ is covered by algebraic true faces (Theorem~\ref{thm: bdry iff alg-ST}). Surprisingly, the corresponding characterisation with $\wstr(B_{X^*})$ and true faces does not hold (Remark~\ref{rmk: str exp vs true faces}).

Part of the interest of Theorem~\ref{mth: structure thm} in our opinion also lies in the fact that, unlike \cites{fonfstrutt, fonfstruttnew, veselystrutt}, our argument is entirely based on geometric considerations, without any analytic tool such as differentiation theorems.

Our main application of the theorem is postponed to the subsequent Section~\ref{sec: tiling}, where we investigate possible extensions of Fonf's tiling result \cite{fonftiling} to polyhedral normed spaces (particularly non-separable ones); this was actually the original motivation for the research in our paper.

\begin{mainth}\label{mth: tiling} A normed space $X$ admits a locally finite tiling (by bounded convex bodies) if and only if it admits a renorming that is \textup{(VI)}-polyhedral and has \textup{($\Delta$)}; equivalently, if and only if it admits a renorming that is \textup{(K)}-polyhedral and LFC.
\end{mainth}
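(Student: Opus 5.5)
We argue by proving a cycle of three implications: locally finite tiling $\Rightarrow$ (K)-polyhedral LFC renorming $\Rightarrow$ (VI)-polyhedral renorming with ($\Delta$) $\Rightarrow$ locally finite tiling. Since the paper's preliminaries are not yet available, we fix the relevant notions as we use them. LFC means that there is a boundary-like family $\{f_\gamma\}\subseteq S_{X^*}$ with $\|x\|=\sup_\gamma f_\gamma(x)$, such that each $x\in S_X$ has a neighbourhood on which only finitely many $f_\gamma$ exceed $1-\delta$. Property ($\Delta$) is the analogous local finiteness of the set of active functionals near each point of $S_X$.

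\emph{Tiling $\Rightarrow$ (K)-polyhedral LFC norm.} Let $\{T_i\}$ be a locally finite tiling of $X$ by bounded convex bodies, and fix $0\in\inte T_{i_0}$. Each tile meets only finitely many others, and two adjacent tiles are separated by a hyperplane. Hence each tile is locally a finite intersection of half-spaces, whose functionals come from its finitely many neighbours. In particular $T_{i_0}$ is a bounded convex body that is locally (near each boundary point) a finite intersection of half-spaces. The candidate norm is the Minkowski functional of $T_{i_0}\cap(-T_{i_0})$. Its unit ball is described locally by finitely many functionals, which yields LFC and hence (K)-polyhedrality. The delicate point is the global step: $T_{i_0}$ need not be symmetric, and the supremum must be taken over the whole (possibly infinite) family of separating functionals. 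Local finiteness of the tiling is exactly what guarantees that only finitely many of them are active near each boundary point. Fonf's separable argument adapts here, because it uses only local finiteness and not separability.

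\emph{LFC (K)-polyhedral $\Rightarrow$ (VI) with ($\Delta$).} LFC gives directly a family of functionals with local finiteness near every point of the sphere, and from this ($\Delta$) follows. For (VI), recall that it asks that for every $f$ in the boundary the face $f^{-1}(1)\cap B_X$ be suitably nondegenerate, namely that each point of the sphere not lying on that face is separated from it by a positive gap. We would verify it from local finiteness: near any point only finitely many $f_\gamma$ are within $\delta$ of $1$, so the sup-structure is locally that of a finite polytope, and finite polytopes satisfy all the polyhedrality conditions.

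\emph{(VI) with ($\Delta$) $\Rightarrow$ tiling.} This is the main direction and it relies on Theorem~\ref{mth: structure thm}. We proceed in three steps.
\begin{enumerate}
\item By Theorem~\ref{mth: structure thm}, $S_X$ is covered by true faces $F_\gamma=f_\gamma^{-1}(1)\cap B_X$, where $\{f_\gamma\}=\B_0$ is the minimal boundary.
\item We then build Fonf's tiling. In the spirit of \cite{fonftiling}, we first obtain an LFC-type structure, using ($\Delta$) together with the true-face covering to show that $\{f_\gamma\}$ is locally finite near $S_X$.
\item Following Fonf, we tile the space by the cells $\{x\colon f_\gamma(x)\text{ is the maximal active }f\}$ intersected with dyadic shells $2^n\le\|x\|\le2^{n+1}$, refined so that each cell is bounded.
\end{enumerate}
Boundedness and non-empty interior of each cell come from the true faces having non-empty relative interior, which is precisely where (VI) rather than merely algebraic true faces is needed. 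Local finiteness comes from ($\Delta$).

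The main obstacle is this last step. We must ensure that the cells are genuine convex bodies (the preimages of a face cone over the shells are not convex), and in the non-separable setting we cannot use Fonf's enumeration. I would first try cutting each shell by finitely many hyperplanes locally: use a well-ordering of $\B_0$ and define each cell as the set of points whose first active index is $\gamma$. Convexity of the cells is then recovered by intersecting with half-spaces $\{f_\gamma\ge(1-\e)\|\cdot\|\}$ replaced by the linear conditions $f_\gamma\ge(1-\e)2^n$.
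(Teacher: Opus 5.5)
Your architecture (symmetrise a tile around $0$; use Theorem~\ref{mth: structure thm} for the converse) is close to the paper's. However, the decisive implication, (VI)+($\Delta$) $\Rightarrow$ tiling, is left unproved, and the obstacle that stops you is not real. If $\Gamma_f=f^{-1}(1)\cap B_X$ is a true face, then $\|y\|=f(y)$ for every $y$ in the convex cone $\R^+\Gamma_f$. So the part of a shell lying over $\Gamma_f$,
\[ T_{n,f}=\{ax\colon x\in\Gamma_f,\ n\le a\le n+1\}=f^{-1}([n,n+1])\cap\R^+\Gamma_f, \]
is a convex cone cut by a slab. It is closed, convex and bounded, and it has non-empty interior exactly because $\inte_H\Gamma_f\neq\emptyset$. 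The family $\{T_{n,f}\colon n\in\N,\ f\in\B_0\}\cup\{B_X\}$ covers $X$ because the true faces cover $S_X$. Its members have pairwise disjoint interiors because distinct true faces have disjoint relative interiors. No enumeration or well-ordering is needed, so non-separability plays no role. Local finiteness holds because every $x\in S_X$ has a neighbourhood meeting only finitely many true faces. That is the $\Delta$-QP reformulation of (VI)+($\Delta$) (Lemmas~\ref{lemma: DeltaQP iff locally finite faces} and~\ref{lemma: Delta-QP pts iff VI+Delta}), not a consequence of ($\Delta$) alone. By contrast, your `first active index' cells $\R^+\Gamma_\gamma\setminus\bigcup_{\beta<\gamma}\R^+\Gamma_\beta$ are in general neither closed nor convex. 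The proposed repair via $f_\gamma\ge(1-\e)2^n$ is too vague to yield a tiling, so this direction is not established as written.

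The other two implications rest on definitions that are not the paper's. LFC means that each $x\in S_X$ has a neighbourhood $U$ and functionals $f_1,\dots,f_n$ with $\|y\|=\|z\|$ whenever $y,z\in U$ agree on all $f_j$. It says nothing about a norming family being locally finite; your version is essentially property (IV) for such a family, far more than LFC and not justified by your first step. Likewise, ($\Delta$) is the pointwise condition $|\ext\D(x)|<\infty$, and (VI) asks that $[x,y]\subseteq S_X$ for $y\in S_X$ near $x$, not a positive gap.

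With the correct definitions, (K)+LFC $\Rightarrow$ (VI)+($\Delta$) is not `direct'. Put $Y=\bigcap_j\ker f_j$ and pick a finite-dimensional $Z\ni x_0$ with $X=Y\oplus Z$. Observe that $\|x_0+y+z\|=\|x_0+z\|$ near $x_0$. Use (K) to see that $x_0$ is a $\Delta$-QP point of the polytope $B_Z$. Composing its finitely many facet functionals with the projection onto $Z$ along $Y$ shows that $x_0$ is $\Delta$-QP for $B_X$, and Fact~\ref{fact: DeltaQP iff} then gives (VI) and ($\Delta$). This is where (K) is genuinely used, since LFC alone allows smooth norms.

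In your first implication, `each tile meets only finitely many others' is false in infinite dimensions: $B_X$ above meets every $T_{1,f}$. Only the local statement holds, and via separating hyperplanes it shows that each boundary point of a tile is $\Delta$-QP. You must then transfer this to $T\cap(-T)$. You can do it directly, since $\Delta$-QP boundary points are preserved under intersecting two convex bodies, or as the paper does, by showing that $\{C_1\cap(-C_2)\}$ is again a locally finite tiling. Finally, (K) follows from compactness of the finite-dimensional sections of a body all of whose boundary points are $\Delta$-QP, not from LFC.
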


In the course of the proof, we also observe that each body in a locally finite tiling is necessarily a polytope (Corollary~\ref{cor: locally finite gives Delta QP}), generalising an observation of \cite{KleeTri}, who proved the same for finite-dimensional spaces. Theorem~\ref{mth: tiling} provides us with a large collection of normed spaces that admit locally finite tilings\footnote{For simplicity, in the rest of the Introduction, we tacitly assume that all elements of all tilings are bounded convex bodies.}, for instance all normed spaces that are the linear span of a biorthogonal system (Corollary~\ref{cor: b.s. gives tiling}); intriguingly, it also yields an example of a non-separable normed space that admits a countable locally finite tiling (Example~\ref{ex: ell_Infty^F}), which cannot occur in a Banach space (Fact~\ref{fact: card of tiling in Banach}).

Even though the main focus of our paper is on normed spaces, our result even has strong consequences for Banach spaces. In fact, it implies that a Banach space admits a locally finite tiling if and only if it admits a polyhedral norm with ($\Delta$). Besides being a direct generalisation of Fonf's result \cite{fonftiling}, it clarifies which is the correct geometric assumption underlying the existence of locally finite tilings. Furthermore, it connects several apparently unrelated problems in renorming theory. In fact, it is unknown if every polyhedral Banach space admits a polyhedral norm with ($\Delta$) and it is also unknown if each polyhedral Banach space admits a polyhedral LFC norm. Our result shows that these two questions are one and the same, and that they are also equivalent to the existence of a locally finite tiling. Further, as LFC norms are most frequently employed as a tool to construct $C^\infty$-smooth ones, it is also conceivable that our result will be instrumental in advancing the understanding of smooth renormings of non-separable polyhedral Banach spaces. We refer to \cites{BibleSmith, dantashajekrusso, Smith, ST_JFA19} for some results about smooth and polyhedral renormings in the non-separable context.

Finally, Theorem~\ref{mth: structure thm} and Theorem~\ref{mth: tiling} seem to indicate the importance of (VI)-polyhedrality + ($\Delta$), especially in the renorming theory of non-separable polyhedral spaces. Notice that (VI)-polyhedrality + ($\Delta$) is a weakening of the more extensively studied (IV)-polyhedrality and, in a certain sense, a local variant of it.
\smallskip

In the second part of the paper (Section~\ref{sec: counterexamples}), we return to the structural theorem (Theorem~\ref{mth: structure thm}) with an opposite perspective and we construct several counterexamples that are collectively intended to show that our result in Theorem~\ref{mth: structure thm} is optimal in a strong sense. A summary of the results that we prove reads as follows.

\begin{mainth}\label{mth: examples} There exist polyhedral normed spaces whose unit sphere is not covered by true faces. More specifically:
\begin{enumerate}
    \item\label{mth: K Delta} There exists a \textup{(K)}-polyhedral normed space with property \textup{($\Delta$)} and whose unit sphere is not covered by the true faces (Section~\ref{sec: K+Delta no ST});
    \item\label{mth: V} There exists a \textup{(V)}-polyhedral normed space whose unit sphere doesn't have any algebraic true face (Section~\ref{sec: countable dim});
    \item\label{mth: str exp} There exists a \textup{(K)}-polyhedral normed space $X$ such that $\str(B_X)$ is dense in $S_X$ (Section~\ref{sec: str exp dense}).
\end{enumerate}
\end{mainth}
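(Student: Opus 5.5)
This statement collects three independent constructions, one for each item, so I will build three concrete incomplete normed spaces, in each case as a suitable subspace or renorming of $c_0$-type or $\ell_\infty$-type spaces. Each time the space is defined as the linear span of a well-chosen set of vectors, or as a set of finitely supported sequences, so that finite-dimensional sections are easy to control. Polyhedrality of the required kind is checked from an explicit description of a boundary $\B\subset S_{X^*}$. The same description is then used to identify the faces $f^{-1}(1)\cap B_X$ and to test whether they have non-empty (algebraic) relative interior. The first claim, that polyhedral normed spaces need not have $S_X$ covered by true faces, follows from any of \ref{mth: K Delta}--\ref{mth: str exp}.

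For \ref{mth: V}, I take $X=c_{00}$, which has countable dimension, with a norm $\|x\|=\sup_n |f_n(x)|$. Here $(f_n)$ is a countable family of functionals chosen so that each $x$ has only finitely many $n$ with $|f_n(x)|$ close to $\|x\|$; this gives (V)-polyhedrality. On the other hand, no single $f_n$ is attained on a set with algebraic interior in its hyperplane. To achieve the latter, I arrange that every face $F_n=f_n^{-1}(1)\cap B_X$ is "cut" in every direction by later functionals. For example, I take $f_n=e_n^*+\sum_{k>n}2^{-k}\e_k e_k^*$, with signs $\e_k$ chosen so that for any $x\in F_n$ and any direction $h\in\ker f_n$, some $f_m$ strictly exceeds $1$ on $x+th$ for all small $t>0$ in one of the two directions. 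Since $X$ has countable dimension, such directions $h$ can be enumerated and handled diagonally. The same diagonal device should also show that $\wexp(B_{X^*})$ is not a boundary, consistently with Theorem~\ref{thm: bdry iff alg-ST}.

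For \ref{mth: K Delta}, I exploit the gap between algebraic and topological interior. By Theorem~\ref{mth: structure thm}, such an $X$ has $S_X$ covered by algebraic true faces, so I must produce a face that is algebraically but not topologically thick. I take $X$ to be a dense subspace of $c_0$, such as finitely supported sequences renormed by adding a functional $\varphi$ that is unbounded (discontinuous) for the original norm restricted to the face. The norm is $\|x\|=\max\{\|x\|_\infty,|\varphi(x)|\}$ on a suitable subspace, or a variant of it. Then the face $\varphi^{-1}(1)\cap B_X$ has algebraic interior but empty relative interior. I will then verify (K)-polyhedrality and property ($\Delta$) directly from the explicit boundary $\{\pm e_n^*\}\cup\{\pm\varphi\}$.

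For \ref{mth: str exp}, I will again work on a countable-dimensional space with a boundary indexed by a dense sequence. The extreme points of $B_X$ (vertices of the finite-dimensional sections) should be strongly exposed. Density in $S_X$ is obtained by making every face shrink under finite-dimensional perturbation, in the same spirit as \ref{mth: V}.

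I expect the main obstacle to be \ref{mth: V}. There the requirement that no face has algebraic interior conflicts directly with the finiteness condition in (V)-polyhedrality, and the diagonal choice of coefficients must simultaneously keep the norm finite, preserve (V), and kill every direction. I would first try geometric decay $2^{-k}$ in the coefficients with signs chosen by enumeration, and adjust the decay rate if (V) fails.
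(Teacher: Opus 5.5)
Your plans for items (a) and (b) fail for the same reason. Each produces a space that is (IV)-polyhedral, and by Corollary~\ref{cor: str thm for IV-poly} the unit sphere of a (IV)-polyhedral normed space is covered by true faces.

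\emph{Item (b).} Your requirement that every $x$ has only finitely many $n$ with $|f_n(x)|$ close to $\|x\|$ is exactly property (IV) for the $1$-norming set $\{\pm f_n\}$: a $w^*$-cluster point $f$ with $f(x)=1$ would put infinitely many $\pm f_n$ in $\{g\colon g(x)>1-\delta\}$. So Lemma~\ref{lemma: V poly for bdry} makes $X$ (IV)-polyhedral, whatever signs or decay you choose. Concretely, $f_m$ only involves coordinates $k\geq m$, so $f_m(e_1)=0$ for $m\geq 2$ and $|f_m(y)|=|f_m(y-e_1)|\leq\|y-e_1\|$. Hence every $y\in f_1^{-1}(1)$ with $\|y-e_1\|<1$ has norm $1$, and $F_1$ is a genuine true face: later functionals never see earlier coordinates.

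The conflict you anticipated is with ($\Delta$), not with (V). Since (V)+($\Delta$)$\iff$(IV) (Fact~\ref{fact: IV iff D + V}), a (V)-polyhedral space without algebraic true faces must have infinitely many norming extreme functionals at every point. The paper simply takes $\ell_{1,0}=(c_{00},\|\cdot\|_1)$. Each $x$ is normed by every sign sequence agreeing with $\sgn(x_j)$ on $\supp(x)$, so no point is G\^ateaux smooth and Proposition~\ref{prop: poly VS smooth} excludes algebraic true faces. Meanwhile an extreme $f\in\{-1,1\}^{\N}$ outside $\D(x)$ satisfies $f(x)\leq 1-2\min_{j\in\supp(x)}|x_j|$, which is (V). The countable-boundary version of your idea is the branching $f\mapsto f\pm\varepsilon_{n+1}e^*_{n+1}$ in Theorem~\ref{thm: V no true faces}.

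\emph{Item (a).} The boundary $\{\pm e_n^*\}\cup\{\pm\varphi\}$ has $w^*$-derived set $\{0\}$, because $e_n^*(x)\to 0$ for every $x\in c_0$. So (IV) holds again, for any choice of $\varphi$. Directly: $\|\cdot\|\geq\|\cdot\|_\infty$, and such $x$ exist when $\varphi$ is $\|\cdot\|_\infty$-discontinuous, so any $x$ with $\varphi(x)=1$ and $\|x\|_\infty<1$ lies in the relative interior of $\varphi^{-1}(1)\cap B_X$, contrary to your claim. What is missing is a face cut by infinitely many boundary functionals accumulating at its own functional. The paper adds $h_n=(e_1^*+b_ne_n^*)/(1+a_n)$ with $\inf_n a_n/b_n=0$ (Proposition~\ref{prop: c_00 renorming}). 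For $x$ in the face of $e_1^*$ and $n>\max\supp(x)$, one has $h_n(x+re_n)=(1+b_nr)/(1+a_n)>1$ once $r>a_n/b_n$, so no relative ball survives. At the same time each finitely supported vector meets only finitely many $h_n$, which preserves algebraic interiors and ($\Delta$).

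\emph{Item (c).} You give no construction, and the heuristic offered is false. Vertices of finite-dimensional sections need not even be extreme in $B_X$: in $(c_{00},\|\cdot\|_\infty)$ the vertices of $B_{\spann\{e_1,\dots,e_n\}}$ are midpoints of segments in direction $e_{n+1}$. The appeal to the mechanism of (b) inherits the problem above. What is needed is a way to plant strongly exposed points along a dense set and keep them through a limit without losing polyhedrality. In Theorem~\ref{thm: extreme dense} the paper does the following:
\begin{itemize}
\item It chooses a dense sequence $(y_n)$ that is also an algebraic basis, and sets $X_n=\spann\{y_1,\dots,y_n\}$.
\item It inductively takes $x_{n+1}\in\R^+\{y_{n+1}\}\cap\partial P_n$ and $P_{n+1}=\conv\bigl(\conv((P_n\cap X_n)\cup(1-\varepsilon_{n+1})P_n)\cup\{\pm x_{n+1}\}\bigr)$.
\item Since $x_{n+1}\notin X_n$, it lies outside the inner body, so it is strongly exposed by Fact~\ref{fact: new str exp point}.
\item The identity $P_k\cap X_n=P_n\cap X_n$ for $k\geq n$ ensures that each $x_n$ survives in $P=\bigcap_n P_n$ and stays strongly exposed there, and that $P$ is a polytope.
\item The condition $\prod_n(1-\varepsilon_n)\geq 1-\varepsilon$ keeps $P$ a bounded body.
\end{itemize}
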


Therefore, combining the results of Theorem~\ref{mth: structure thm} and Theorem~\ref{mth: examples} we obtain the following situation. Suppose that $X$ is a polyhedral normed space with ($\Delta$). Then, already the weakest form of polyhedrality, namely (K)-polyhedrality, is sufficient to imply that $S_X$ is covered by algebraic true faces. Instead, (VI)-polyhedrality implies that the sphere is covered by genuine true faces and, if (VI)-polyhedrality is relaxed to the next weaker polyhedrality notion that we consider, namely (K)-polyhedrality, then the result fails to hold. Without the explicit assumption of ($\Delta$), (IV)-polyhedrality implies the validity of the structural theorem and relaxing (IV)-polyhedrality to (V)-polyhedrality doesn't even imply the algebraic version of the structural theorem. The following diagrams summarise the previous discussion.

\begin{figure}[H]\begin{center}
\begin{tikzcd}[column sep=1.5cm, row sep=1.5cm]
& & (\mathrm{ST}) \arrow[r, Rightarrow] &  (\mathrm{aST}) \\
(\mathrm{IV}) \arrow[r, Leftrightarrow] & 
(\mathrm{V})+(\Delta) \arrow[r, Rightarrow] & 
(\mathrm{VI})+(\Delta) \arrow[u, Rightarrow] \arrow[r, Rightarrow] & 
(\mathrm{K})+(\Delta) \arrow[u, Rightarrow] \arrow[ul, rightarrow, dashed]
\end{tikzcd}
\bigskip

\begin{tikzcd}[column sep=1.5cm, row sep=1.5cm]
(\mathrm{ST}) \arrow[r, Rightarrow] &  (\mathrm{aST}) \\
(\mathrm{IV}) \arrow[r, Rightarrow] \arrow[u, Rightarrow] & 
(\mathrm{V})  \arrow[r, Rightarrow] \arrow[u, rightarrow, dashed] & 
(\mathrm{VI}) \arrow[r, Rightarrow] & 
(\mathrm{K}) 
\end{tikzcd}
\caption{The top diagram concerns polyhedral spaces with ($\Delta$), the bottom one general polyhedral spaces. Dashed arrows mean that the implication does not hold. (ST) means that the structural theorem holds, while (aST) means that the algebraic version holds.}
\end{center}\end{figure}

However, the results that we obtain in Section~\ref{sec: counterexamples} are actually much stronger than claimed in Theorem~\ref{mth: examples}. For instance, while a normed space as in \ref{mth: V} is easy to construct (it is enough to take the space $\ell_{1,0}$ of finitely supported sequences with the $\ell_1$ norm, Section~\ref{sec: ell1,0}), the result of Section~\ref{sec: countable dim} shows that every normed space of countable dimension has an equivalent (V)-polyhedral norm whose unit sphere doesn't have any true face. The same comment applies to the space in \ref{mth: str exp}, which can be taken to be a suitable renorming of any normed space of countable dimension. The normed space in \ref{mth: K Delta} is a renorming of $c_{00}$ and it also offers two more phenomena that can't happen in polyhedral Banach spaces. First, it is possible that in a polyhedral normed space a point is G\^ateaux smooth, but not Fr\'echet smooth (Example~\ref{ex: (K) + (Delta) not (VI)}); second, quite surprisingly, a point can be Fr\'echet smooth without being in the interior of any true face (Example~\ref{ex: Frechet vs true face}). In a forthcoming paper \cite{DDRS}, we will present additional examples, involving in particular non-separable spaces, and examples of (K)-polyhedral normed spaces with ($\Delta$) and whose unit sphere doesn't have any true face at all (improving \ref{mth: K Delta}).

The significance of \ref{mth: str exp} is that a normed space whose unit sphere is covered by true faces clearly can't satisfy the property in \ref{mth: str exp}, hence \ref{mth: str exp} gives another witness of the failure of the structural theorem. Moreover, it yields an example of a polyhedral normed space with `many' extreme points, in a strong sense. The construction in Section~\ref{sec: countable dim} also goes in this direction, as the space $X$ constructed there has the additional property that $\conv(\ext B_X)= B_X$ (notice that we only take the convex hull, and not its closure). These results thus complement several existing examples of polyhedral Banach spaces whose ball has many extreme points, as \cites{Schreier, DEPOLY, DePolyII}. In this respect, recall Lindenstrauss' problem \cite{Lindenstrauss} of whether the unit ball of a polyhedral Banach space can be the closed convex hull of its extreme points, which was solved in \cite{DEPOLY}. 
\smallskip

Besides the sections whose contents we already introduced, in Section~\ref{sec: prelim} we present the definitions of all undefined notions from the Introduction, and additional necessary ones; we also recall in detail the various notions of polyhedrality that we need and some basic facts about them. Finally, since several results from the literature that we require are only stated and proved for polyhedral Banach spaces, or with significantly different notation, and for possible future reference, in Appendix~\ref{sec: appendix} we give a more detailed outline of known results about polyhedrality that we use.

\section{Preliminaries and notation}\label{sec: prelim}
Throughout the paper, we only consider real normed spaces. For a normed space $X$, $B_X$ and $S_X$ denote its closed unit ball and unit sphere, respectively; the topological dual of $X$ is denoted $X^*$. In case an equivalent norm $\nn$ is introduced on $X$, the unit ball in the new norm is indicated by $B_{(X,\nn)}$ and similarly for the unit sphere $S_{(X,\nn)}$. Given points $x,y\in X$, we write $[x,y]$ for the closed segment joining $x$ and $y$, and $(x,y)$ for the relatively open one. $B(x,r)$ denotes the closed ball centred at $x$ of radius $r>0$. Accordingly, a \emph{neighbourhood} of a point $x$ is any set containing $x$ in its interior (namely, we don't assume that neighbourhoods are open sets). For a subset $C$ of $X$, we write $\R^+ C\coloneqq \{ax\colon a>0,\ x\in C\}$.

A \emph{biorthogonal system} in a normed space $X$ is a family $\{e_\alpha; e^*_\alpha\}_{\alpha\in \Gamma}\subseteq X\times X^*$ such that $e^*_\alpha (e_\beta)= \delta_{\alpha, \beta}$, for $\alpha, \beta\in \Gamma$. The \emph{support} of a vector $x$ relative to the biorthogonal system is $\supp(x)\coloneqq \{\alpha\in \Gamma\colon e^*_\alpha (x)\neq 0\}$. For a biorthogonal system indexed by $\N$, we just write $\{e_n; e^*_n\}_{n= 1}^\infty$. By \emph{dimension} of a normed space we understand the algebraic dimension, namely the cardinality of an algebraic basis. If $X$ has countable dimension, a standard result due to Markushevich \cite{Markushevich} (see, \emph{e.g.}, \cite{HMVZ}*{Lemma~1.21}) shows that $X= \spann\{e_n \}_{n= 1}^\infty$, for some biorthogonal system $\{e_n; e^*_n\}_{n= 1}^\infty$ in $X$.

We now recall some basic notions concerning convex sets and we refer to \cites{DGZ, FHHMZ, GMZ} for more details and notions we don't define here. A \emph{convex body} in a normed space is a closed convex set with non-empty interior (equivalently, the closure of a non-empty open convex set). Given a convex set $C$ in $X$, a point $x\in C$ is an \emph{extreme point} of $C$ if $x= \frac{y+z}{2}$ with $y,z\in C$ implies $x= y= z$ (namely, $x$ is not the mid-point of a non-trivial segment in $C$). We write $\ext C$ for the set of extreme points of $C$. The point $x$ is an \emph{exposed point} of $C$ if there is a functional $f\in X^*$ such that $f(y)< f(x)$ for all $y\in C$, $y\neq x$; in this case, the functional $f$ \emph{exposes} $x$. The point $x$ is \emph{strongly exposed} for $C$ if there is a functional $f$ that exposes it and $y_n\to x$ for each sequence $(y_n)_{n= 1}^\infty$ in $C$ such that $f(y_n)\to f(x)$. We write $\exp C$ and $\str C$ for the sets of exposed and strongly exposed points of $C$, respectively. We also require the $w^*$ versions of these concepts, which are obtained by requiring that the functional belongs to the predual, rather than the dual. More precisely, given a convex set $C$ in $X^*$, an element $f\in C$ is \emph{$w^*$-exposed} if there exists $x\in X$ such that $g(x)< f(x)$ for all $g\in C$, $g\neq f$. The functional $f$ is \emph{$w^*$-strongly exposed} if there exists $x\in X$ as above and such that $g_n\to f$ for each sequence $(g_n)_{n= 1}^\infty$ in $C$ such that $g_n(x)\to f(x)$. We use the notations $\wexp C$ and $\wstr C$ for these sets. Recall that, by the classical \v{S}mulyan Lemma (see, \emph{e.g.}, \cite{GMZ}*{Lemma 142}, or \cite{Megginson}*{Theorem 5.6.11}), the norm $\n$ is Fr\'echet differentiable at $x\in S_X$ if and only if there exists $f\in B_{X^*}$ that is $w^*$-strongly exposed in $B_{X^*}$ by $x$.

For the definitions of G\^{a}teaux and Fr\'echet smoothness we refer to the sources above. We only recall here the definition of LFC norm. A norm $\n$ on $X$ \emph{locally depends on finitely many coordinates} (is LFC, for short) if for each $x\in S_X$ there exist a neighbourhood $U$ of $x$ and functionals $f_1,\ldots,f_n\in X^*$ such that $\|y\|=\|z\|$ for every $y,z\in U$ with $f_k(y)=f_k(z)$ for all $k=1, \dots, n$.
\smallskip

For a subset $A$ of $X^*$, $A'$ stands for the set of all $w^*$-cluster points of $A$. Given $x\in S_X$, we write 
\[ \D_X(x)\coloneqq \{f\in S_{X^*}\colon f(x)=1\}, \]
that is, $\D_X(x)$ is the image of $x$ under the duality mapping (depending on the context, we also write $\D_{(X,\n)}(x)$, or just $\D(x)$). Note that, for each $x\in S_X$, $\D(x)$ is $w^*$-compact and convex; further, it is an extremal set. Standard arguments involving extremality and the Krein--Milman theorem give the following.

\begin{fact}\label{fact: basic D(x)} Let $X$ be a normed space and $x\in S_X$. Then:
\begin{enumerate}
    \item\label{i: ext D(x)=} $\D(x)\cap \ext B_{X^*}=\ext \D(x)$;
    \item\label{i: ext D(y)} if $y\in S_X$ is such that $\D(y)\subseteq \D(x)$, then $\ext \D(y)\subseteq \ext \D(x)$;
    \item\label{i: extend extreme} if $Y$ is a subspace of $X$, $x\in Y$, and $g\in \ext \D_Y(x)$, there exists $f\in \ext \D_X(x)$ such that $f\cut_Y=g$.
\end{enumerate}
\end{fact}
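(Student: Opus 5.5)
We prove the three items in order. Each one reduces to standard facts: extremality of $\D(x)$, the Krein--Milman theorem, and the Hahn--Banach theorem.

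For \ref{i: ext D(x)=}, the first task is to check that $\D(x)$ is an extremal (face) subset of $B_{X^*}$. Suppose $f=\lambda g+(1-\lambda)h$ with $g,h\in B_{X^*}$, $\lambda\in(0,1)$ and $f\in\D(x)$. Then $1=f(x)=\lambda g(x)+(1-\lambda)h(x)$. Since $g(x),h(x)\leq 1$, this forces $g(x)=h(x)=1$, so $g,h\in\D(x)$. With extremality in hand, both inclusions follow.
\begin{itemize}
\item If $f\in\D(x)\cap\ext B_{X^*}$, then $f$ is not a proper convex combination of points of $B_{X^*}$, and in particular not of points of $\D(x)$. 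Hence $f\in\ext\D(x)$.
\item If $f\in\ext\D(x)$ and $f=\frac{g+h}{2}$ with $g,h\in B_{X^*}$, extremality gives $g,h\in\D(x)$. Since $f$ is extreme in $\D(x)$, we get $g=h=f$, so $f\in\ext B_{X^*}$.
\end{itemize}

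For \ref{i: ext D(y)}, apply \ref{i: ext D(x)=} twice. It gives $\ext\D(y)=\D(y)\cap\ext B_{X^*}\subseteq\D(x)\cap\ext B_{X^*}=\ext\D(x)$.

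For \ref{i: extend extreme}, which is the only step needing a genuine argument, consider the restriction map $R\colon X^*\to Y^*$, $R(f)=f\cut_Y$. It is linear and $w^*$-to-$w^*$ continuous. Set $F\coloneqq\{f\in\D_X(x)\colon f\cut_Y=g\}$.
\begin{itemize}
\item $F$ is non-empty. A Hahn--Banach norm-preserving extension $f$ of $g$ satisfies $\|f\|=\|g\|=1$ and $f(x)=g(x)=1$, so $f\in F$.
\item $F$ is convex and $w^*$-compact, being a $w^*$-closed subset of the $w^*$-compact set $\D_X(x)$.
\item By the Krein--Milman theorem, $F$ has an extreme point $f$.
\end{itemize}
It remains to show that $f\in\ext\D_X(x)$. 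Suppose $f=\frac{f_1+f_2}{2}$ with $f_1,f_2\in\D_X(x)$. Then $f_i\cut_Y\in B_{Y^*}$ and $f_i\cut_Y(x)=f_i(x)=1$, so $f_i\cut_Y\in\D_Y(x)$. Moreover $g=\frac{f_1\cut_Y+f_2\cut_Y}{2}$. Since $g$ is extreme in $\D_Y(x)$, we get $f_1\cut_Y=f_2\cut_Y=g$, which means $f_1,f_2\in F$. As $f$ is extreme in $F$, it follows that $f_1=f_2=f$.

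The only mildly delicate point is this last one: extremality must be checked in $\D_X(x)$, not just in $F$. It is handled by pushing the decomposition down to $Y$ and using that $g$ is extreme in $\D_Y(x)$.
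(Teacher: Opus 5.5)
Your proof is correct and is essentially the argument the paper has in mind. The paper gives no written proof: it only remarks that $\D(x)$ is a $w^*$-compact convex extremal subset of $B_{X^*}$, and that standard arguments with extremality and the Krein--Milman theorem yield the three items. You carry this out exactly, with Hahn--Banach supplying non-emptiness of the fibre $\{f\in\D_X(x)\colon f\cut_Y=g\}$ in (c), and the push-down to $Y$ correctly showing that an extreme point of this fibre is extreme in $\D_X(x)$.
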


A set $\B \subseteq B_{X^*}$ is \emph{$1$-norming} for the normed space $X$ if, for all $x\in X$,
\[ \|x\|=\sup \{f(x)\colon f\in \B\}. \]
By the Hahn--Banach theorem, $\B$ is $1$-norming for $X$ if and only if $B_{X^*}= \cconv^{w^*}(\B)$. The
set $\B \subseteq B_{X^*}$ is a \emph{boundary} for $X$ if for every $x\in S_X$ there exists $f\in\B$ such that $f(x)=1$. The Krein--Milman theorem implies that $\ext B_{X^*}$ is always a boundary for $X$. Instead, the set $\wexp(B_{X^*})$ is not necessarily a boundary, but it is contained in all boundaries of $X$. If $X$ is a polyhedral Banach space, the structural theorem however implies that $\wexp(B_{X^*})$ is a boundary, which is then contained in every other boundary. This boundary is usually denoted by $\B_0$ and called \emph{the minimal boundary}. For a polyhedral normed space $X$, we will prove that $\wexp(B_{X^*})$ is a boundary if and only if $S_X$ is covered by algebraic true faces of $B_X$ (Theorem~\ref{thm: bdry iff alg-ST}). In our paper the notation $\B_0$ and the terminology \emph{the minimal boundary} will be used in reference to a polyhedral normed space only when its unit sphere is covered by algebraic true faces and for $\B_0= \wexp(B_{X^*})$.
\smallskip

We now recall the polyhedrality notions that are relevant for our paper. We refer to \cites{DP-Modena, DP-Rocky, FLP_handbook, InfPoly} for more definitions, references, and results. Since in some of these sources the results are only stated for polyhedral Banach spaces, we give a more detailed outline of the results we need in Appendix~\ref{sec: appendix}.

\begin{defi} Consider the following properties of a normed space $X$:
\begin{enumerate}
    \item[(IV)] $f(x)<1$ for every $x\in S_X$ and $f\in (\ext B_{X^*})'$;
    \item[(V)] $\sup \{f(x) \colon f \in \ext B_{X^*} \setminus \D(x)\} <1$ for every $x\in S_X$;
    \item[(VI)] every $x\in S_X$ has a neighbourhood $V$ such that, for every $y\in V\cap S_X$, the segment $[x,y]$ is contained in $S_X$;
    \item[(K)] the unit ball of every finite-dimensional subspace of $X$ is a polytope.
\end{enumerate}
For $\textup{j} \in \{\textup{IV, V, VI, K}\}$, $X$ is called (j)\emph{-polyhedral} if it satisfies property (j).
\end{defi}

\begin{rem}\label{rmk: poly def} We outline the following remarks about the previous definitions.
\begin{enumerate}
    \item The following implications were proved in \cite{DP-Rocky}:
    \[ \textup{(IV)} \implies \textup{(V)} \implies \textup{(VI)} \implies \textup{(K)}. \]
    It is also known that none of them can be reversed, \cites{DP-Rocky, InfPoly}.
    \item The notion of (K)-polyhedrality is the most general and was introduced by Klee in \cite{Klee60}. If a normed space $X$ is (K)-polyhedral it is often simply said that it is \emph{polyhedral} and we occasionally also follow this convention. Further, (IV)-polyhedrality is often called property ($*$).
    \item\label{item: equiv IV V VI} In the definition of (IV) or (V) one can replace the boundary $\ext B_{X^*}$ with any boundary for $X$ (Lemma~\ref{lemma: V poly for bdry}). Further, property (VI) admits the following reformulation: every $x\in S_X$ has a neighbourhood $V$ such that $\D(y)\subseteq \D(x)$ for every $y\in V\cap S_X$ (Lemma~\ref{lemma: VI iff D(y)}).
\end{enumerate}
\end{rem}

The following geometric condition will play a key role in our results. A normed space $X$ satisfies property ($\Delta$) if for every $x\in S_X$ the set $\ext \D(x)$ is finite. By Fact~\ref{fact: basic D(x)}\ref{i: ext D(x)=}, it is equivalent to ask that $\D(x)\cap \ext B_{X^*}$ is finite for each $x\in S_X$. Further, as in Remark~\ref{rmk: poly def}\ref{item: equiv IV V VI} above, one can also replace the boundary $\ext B_{X^*}$ with any boundary for $X$ (Fact~\ref{fact: Delta iff B cap D finite}).

\begin{fact}[\cite{InfPoly}*{Observation~3.5}]\label{fact: IV iff D + V} A normed space $X$ is \textup{(IV)}-polyhedral if and only if it is \textup{(V)}-polyhedral and satisfies \textup{($\Delta$)}.
\end{fact}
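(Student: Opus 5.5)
The plan is to prove the two implications separately, using only $w^*$-compactness of $B_{X^*}$ and of $\D(x)$, together with Fact~\ref{fact: basic D(x)}\ref{i: ext D(x)=}, which identifies $\ext\D(x)$ with $\D(x)\cap\ext B_{X^*}$. Throughout, $(\ext B_{X^*})'$ denotes the set of $w^*$-cluster points of $\ext B_{X^*}$. Thus $f\in(\ext B_{X^*})'$ means that every $w^*$-neighbourhood of $f$ meets $\ext B_{X^*}\setminus\{f\}$; equivalently, some net in $\ext B_{X^*}\setminus\{f\}$ $w^*$-converges to $f$.

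\emph{(IV) implies ($\Delta$).} Fix $x\in S_X$ and suppose $\ext\D(x)=\D(x)\cap\ext B_{X^*}$ is infinite. Since $\D(x)$ is $w^*$-compact, this infinite subset has a $w^*$-cluster point $f\in\D(x)$, so $f(x)=1$. Moreover $f$ is a cluster point of $\ext B_{X^*}$, which contradicts (IV).

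\emph{(IV) implies (V).} Suppose (V) fails at some $x\in S_X$. Then there is a net $(f_\alpha)$ in $\ext B_{X^*}\setminus\D(x)$ with $f_\alpha(x)\to1$. By $w^*$-compactness of $B_{X^*}$, pass to a subnet $w^*$-converging to some $f$. Then $f(x)=1$, while $f_\alpha(x)<1$ for every $\alpha$, so $f_\alpha\neq f$. Hence $f\in(\ext B_{X^*})'$ with $f(x)=1$, again contradicting (IV).

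\emph{(V) and ($\Delta$) imply (IV).} Let $f\in(\ext B_{X^*})'$ and $x\in S_X$, and suppose $f(x)=1$. Take a net $(f_\alpha)$ in $\ext B_{X^*}\setminus\{f\}$ that $w^*$-converges to $f$. Then $f_\alpha(x)\to1$, so by (V) the functionals $f_\alpha$ eventually lie in $\D(x)$, hence in the set $\D(x)\cap\ext B_{X^*}$, which is finite by ($\Delta$). A finite subset of the Hausdorff space $(X^*,w^*)$ is closed and discrete. A net that eventually stays in it and converges to $f$ must therefore be eventually equal to $f$, contradicting $f_\alpha\neq f$. Hence $f(x)<1$, and (IV) holds.

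No step is a serious obstacle. The only point needing care is the handling of "cluster point" in the last step: one has to use the net characterisation, rather than sequences, so that finiteness can be exploited in the non-metrisable $w^*$ topology.
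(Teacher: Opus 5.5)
Your argument is correct in all three parts. It uses nothing beyond $w^*$-compactness of $B_{X^*}$ and of $\D(x)$, which holds for $\sigma(X^*,X)$ on any normed space, together with Fact~\ref{fact: basic D(x)}\ref{i: ext D(x)=}. The paper gives no proof of its own: it cites Observation~3.5 of \cite{InfPoly} and remarks that the argument there does not need completeness, and your self-contained compactness argument is the standard one that justifies that remark.
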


\begin{defi}\label{def: true faces} Given a convex body $C$ in a normed space $X$ and a supporting hyperplane $H$ for $C$, the face $F\coloneqq C\cap H$ is a \emph{true face} (\emph{algebraic true face}, respectively) of $C$ if $\inte_H(F)\neq \emptyset$ ($\ainte_H(F)\neq \emptyset$, respectively). Here and throughout the paper $\ainte_H(F)$ denotes the algebraic interior of $F$ relative to $H$. 
\end{defi}

\begin{rem}\label{rem: true faces} We list here a few basic observations that we require. Let $C$ be a closed convex set in a normed space $X$.
\begin{enumerate}
    \item\label{item: alg and true face coincide for banach} A simple `cone' argument shows that $\inte(C)= \ainte (C)$, whenever $\inte(C)\neq \emptyset$. Moreover, if $X$ is a Banach space, $\ainte(C)\neq \emptyset$ implies $\inte(C)\neq \emptyset$, by the Baire category theorem. Thus, in a Banach space there is no distinction between the concepts of true face and algebraic true face. This is not the case for incomplete normed spaces, as our examples in Section~\ref{sec: counterexamples} illustrate.
    \item\label{item: interiors coincide} The condition $\inte_H(F)\neq\emptyset$ in the definition of true face can be replaced by the condition $\inte_{\partial C}(F)\neq\emptyset$. In fact, it actually holds that $\inte_{\partial C}(F)=\inte_{H}(F)$, since clearly $F= (\partial C)\cap H$.
    \item\label{item: distinct faces} Finally, if $F$ and $F'$ are distinct algebraic true faces of a convex body $C$, then $\ainte_H(F)\cap F'= \emptyset$ (where $H$ is the hyperplane such that $F=C\cap H$). (When $C$ is the unit ball, this for example follows from \ref{item: alg true face}$\implies$\ref{item: singleton} in Proposition~\ref{prop: poly VS smooth}.) Note that this implies the same assertion for true faces.
\end{enumerate}
\end{rem}

Finally, ahead of the results of Section~\ref{sec: tiling}, we recall that a \emph{tiling} $\cal T$ of a normed space $X$ is a collection of convex bodies that cover $X$ and have mutually disjoint interiors. A point $x\in X$ is a \emph{regular point} for $\cal T$ if it admits a neighbourhood that intersects only finitely many elements of $\cal T$. The tiling $\cal T$ is \emph{locally finite} if each $x\in X$ is a regular point for $\cal T$. We refer, \emph{e.g.}, to \cites{DRShilbert, DEVEtiling, fonftiling, Klee2} for information on tilings of normed spaces.

\begin{rem}\label{rmk: loc finite tiling} Let us list here two basic properties of locally finite tilings, \cites{fonftiling, Klee2}. Let $\cal T$ be a locally finite tiling of a normed space $X$ by bounded convex bodies.
\begin{enumerate}
    \item\label{i: protective} For each $x\in X$ the set $\cal T(x)\coloneqq \{C\in \cal T\colon x\in C\}$ is finite and there is a neighbourhood $U$ of $x$ that is disjoint from each tile in $\cal T\setminus \cal T(x)$ (which of course implies that $U\subseteq \bigcup_{C\in \cal T(x)}C$). In fact, let $V$ be a neighbourhood of $x$ that intersects only finitely many elements of $\cal T$, say $C_1,\dots, C_n$. Then,
    \[ U\coloneqq V\setminus \bigcup\{ C_j\colon j=1, \dots, n,\ x\notin C_j \} \]
    is a neighbourhood of $x$ (as each $C_j$ is closed).
    \item\label{i: regular} As a consequence of the previous item, if $x$ belongs to the boundary of some $C\in \cal T$, there is at least another $C'\in \cal T$ with $x\in C'$.
\end{enumerate}
\end{rem}

\section{The structural theorem}\label{sec: structural thm}
This section is devoted to our first main result, and we study sufficient conditions for a polyhedral normed space to satisfy the conclusion of the structural theorem, or its algebraic version. The main result of the section is Theorem~\ref{thm: structural in normed space}, which proves Theorem~\ref{mth: structure thm}. We also identify the correct counterpart of the fact that in a polyhedral Banach space $\wexp(B_{X^*})$ is the minimal boundary, namely we prove that, for a polyhedral normed space $X$, $\wexp(B_{X^*})$ is a boundary if and only if $S_X$ is covered by algebraic true faces.

We begin with a result connecting faces with smoothness, which, in the Banach space setting, is essentially \cite{veselystrutt}*{Lemma 2}.

\begin{proposition}\label{prop: poly VS smooth} Let $X$ be a polyhedral normed space, $x\in S_X$, and $f\in \D(x)$; also let $H$ be the hyperplane $H=f^{-1}(1)$. Consider the following conditions:
\begin{enumerate}
    \item\label{item: singleton} $\D(x)=\{f\}$;
    \item\label{item: 2-dim section} if $Y$ is a $2$-dimensional subspace  of $X$ containing $x$ then $x\not\in \ext{B_Y}$;
    \item\label{item: alg true face}  $x\in \ainte_H(H\cap B_X)$, i.e., $x$ is in the relative algebraic interior of $H\cap B_X$ in $H$;
    \item\label{item: G-smooth} $x$ is a G\^ateaux smooth point;
    \item\label{item: frechet} $x$ is a Fr\'echet smooth point;
    \item\label{item: true face}  $x\in \inte_H(H\cap B_X)$, i.e., $x$ is in the relative interior of $H\cap B_X$ in $H$.
\end{enumerate}
Then, 
\[ \ref{item: singleton}\iff\ref{item: 2-dim section}\iff\ref{item: alg true face}\iff\ref{item: G-smooth}\impliedby\ref{item: frechet}\impliedby\ref{item: true face}. \]
Moreover, if $X$ is \textup{(VI)}-polyhedral or $S_X$ is covered by true faces, then all the conditions \ref{item: singleton}--\ref{item: true face} are equivalent.
\end{proposition}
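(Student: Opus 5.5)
We argue through the cycle (a)$\Rightarrow$(b)$\Rightarrow$(c)$\Rightarrow$(a), then add (d), and handle (e), (f) and the final clause separately. Two facts are used throughout. First, polyhedrality enters only via the $2$-dimensional sections $B_Y$, which are polygons. Second, by Fact~\ref{fact: basic D(x)}\ref{i: extend extreme}, extreme points of $\D_Y(x)$ extend to extreme points of $\D_X(x)$.

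For (a)$\Rightarrow$(b), suppose some $2$-dimensional $Y\ni x$ has $x\in\ext B_Y$. Since $B_Y$ is a polygon, $x$ is a vertex, so $\D_Y(x)$ is a nondegenerate segment with two distinct extreme points $g_1\neq g_2$. By Fact~\ref{fact: basic D(x)}\ref{i: extend extreme}, these extend to distinct elements of $\D_X(x)$, contradicting (a).

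For (b)$\Rightarrow$(c), take any direction $h\in\ker f$ and set $Y=\spann\{x,h\}$. Since $x$ is not a vertex of the polygon $B_Y$, it lies in the relative interior of an edge of $B_Y$. That edge lies in $H$, because $f\cut_Y$ supports $B_Y$ at $x$ and the supporting line at a non-vertex is unique. Hence $x\pm th\in B_X\cap H$ for small $t>0$, which is exactly (c).

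For (c)$\Rightarrow$(a), let $g\in\D(x)$. For each $h\in\ker f$ we have $x\pm th\in B_X$, so $g(x\pm th)\le 1=g(x)$, giving $g(h)=0$. Thus $\ker f\subseteq\ker g$, so $g=\lambda f$, and evaluating at $x$ gives $\lambda=1$.

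The equivalence (a)$\iff$(d) is the standard characterisation of Gâteaux smoothness by uniqueness of the supporting functional. The implication (e)$\Rightarrow$(d) is trivial.

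For (f)$\Rightarrow$(e), suppose $B(x,r)\cap H\subseteq B_X$. We show that $\|\cdot\|$ coincides with $f$ on a cone around $x$. Given $y$ near $x$, we have $f(y)>0$, and the point $y/f(y)\in H$ is close to $x$, hence lies in $B_X$. So $\|y\|\le f(y)$, and the reverse inequality holds always. Therefore the norm equals the linear functional $f$ near $x$, which makes it Fréchet differentiable there.

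It remains to show, under either extra hypothesis, that (a) implies (f).

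If $S_X$ is covered by true faces, pick a true face $F=B_X\cap H'$ with $H'=g^{-1}(1)$ and $x\in F$. By (a), $g=f$, so $F=H\cap B_X$. We still need $x$ to lie in the relative interior of $F$, not merely in $F$. Suppose instead $x\in F\setminus\inte_H F$. Choose $z\in\inte_H F$; the points on the ray from $z$ through $x$ slightly beyond $x$ lie outside $F$, and we may take them close to $x$. We would then build a $2$-dimensional section in which $x$ is a vertex, contradicting (b). I would try to make this rigorous using Remark~\ref{rem: true faces}\ref{item: distinct faces}: the points just beyond $x$ lie in $S_X$, so they belong to other true faces, and the plan is to show that one such face contains $x$, which would force a second functional in $\D(x)$. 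This step is delicate, and I expect it to be the main obstacle; if covering alone does not suffice, I would fall back on the (VI) argument below.

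If $X$ is (VI)-polyhedral, use the reformulation of Lemma~\ref{lemma: VI iff D(y)}: there is a neighbourhood $V$ of $x$ such that $\D(y)\subseteq\D(x)=\{f\}$ for every $y\in V\cap S_X$, hence $f(y)=1$. Now take $w\in H$ near $x$. Then $w/\|w\|$ lies in $V\cap S_X$, so $f(w/\|w\|)=1$, giving $\|w\|=f(w)=1$. Hence $w\in B_X$, and a whole $H$-neighbourhood of $x$ lies in $B_X\cap H$, which is (f).
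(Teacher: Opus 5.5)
The cycle (a)$\Rightarrow$(b)$\Rightarrow$(c)$\Rightarrow$(a), the implication (f)$\Rightarrow$(e), and (a)$\Rightarrow$(f) under (VI)-polyhedrality are all correct and essentially match the paper's proof. The gap is the case where $S_X$ is covered by true faces: you never prove (a)$\Rightarrow$(f) there, and the sketch you give does not work. The points $x+t(x-z)$ beyond $x$ on the ray from $z$ lie in $H$. If they are outside $F=H\cap B_X$, they have norm $>1$, so they are not in $S_X$ and cannot ``belong to other true faces''. Falling back on the (VI) argument is also not available, because covering by true faces does not imply (VI)-polyhedrality. The renorming of $c_{00}$ discussed right after Remark~\ref{rmk: true faces from hat X} has its sphere covered by true faces but is not (VI)-polyhedral.

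The missing step is short, and you already have both ingredients. By (a)$\Rightarrow$(c), $x\in\ainte_H(F)$. Since $F$ is a true face, $\inte_H(F)\neq\emptyset$. For a convex set with non-empty interior, the algebraic and topological interiors coincide (Remark~\ref{rem: true faces}\ref{item: alg and true face coincide for banach}), so $x\in\inte_H(F)$.

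Concretely, take $z\in H$ and $r>0$ with $B(z,r)\cap H\subseteq F$. Since $f(x)=f(z)=1$, we have $x-z\in\ker f$, so by (c) the point $x'\coloneqq x+t(x-z)$ lies in $F$ for some $t>0$. Then $x=\frac{1}{1+t}x'+\frac{t}{1+t}z$, and convexity of $F$ gives $B\bigl(x,\frac{tr}{1+t}\bigr)\cap H\subseteq F$.

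In particular, your contradiction hypothesis that the points just beyond $x$ lie outside $F$ is refuted directly by (c). No $2$-dimensional section and no appeal to Remark~\ref{rem: true faces}\ref{item: distinct faces} is needed. This is exactly how the paper concludes this case.
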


In particular, all the above items are equivalent when $X$ is complete, which also follows from \cite{veselystrutt}*{Lemma 2}. Further, the last part of the result shows that, if there exists a point in $S_X$ that is G\^ateaux smooth but not Fr\'echet smooth, then $S_X$ is not covered by true faces. In Section~\ref{sec: K+Delta no ST} we will give examples showing that in general polyhedral normed spaces the implications \ref{item: G-smooth}$\implies$\ref{item: frechet}$\implies$\ref{item: true face} might not hold.

\begin{proof} The implications \ref{item: singleton}$\iff$\ref{item: G-smooth}$\impliedby$\ref{item: frechet} are standard.

\ref{item: singleton}$\implies$\ref{item: 2-dim section}. Suppose that \ref{item: singleton} holds and let $Y$ be any 2-dimensional subspace of $X$ containing $x.$ By the Hahn--Banach theorem, $\D_Y(x)=\{f\cut_Y\}$, which implies that $x$ is not an extreme point of the polygon $B_Y$.

\ref{item: 2-dim section}$\implies$\ref{item: alg true face}. Fix any line $L\subseteq H$ such that $x\in L$ and consider the $2$-dimensional subspace $Y=\spann \{x, L\}$. Then, $L$ is a supporting hyperplane of $B_Y$ at $x$. As $x\notin \ext B_Y$, $L\cap B_Y$ is a non-trivial segment which contains $x$ as an interior point.

\ref{item: alg true face}$\implies$\ref{item: singleton}. Let $g\in \D(x)$, and take any $v\in \ker f$, $v\neq 0$. \ref{item: alg true face} implies that there exists $r>0$ such that $x\pm rv\in S_X$, whence $g(x\pm rv)\leq 1$. However, as 
\[ 1=g(x)=\frac{1}{2}g(x+rv)+\frac{1}{2}g(x-rv), \]
the only possibility is that $g(x\pm rv)=1$, thus $v\in \ker g$. Hence, $\ker f\subseteq \ker g,$ and so $g\in \spann \{f\}$. But, as $g(x)=f(x)=1$, it must be that $g=f$.

\ref{item: true face}$\implies$\ref{item: frechet}. Let $V\subseteq \inte_H(H\cap B_X)$ be a neighbourhood of $x$ in $H$. In particular, every $y\in V$ satisfies $\|y\|= f(y)= 1$. This implies that there exists a neighbourhood of $x$ in $X$, namely $\R^+V$, such that 
\[ \|z\|=f(z) \]
for every $z\in \R^+ V$. Let $r>0$ be such that $B(x,r)\subseteq \R^+ V$. Then,
\[ \frac{\|x+h\|-\|x\|-f(h)}{\|h\|}=\frac{1+f(h)-1-f(h)}{\|h\|}=0 \]
for each $h\in X$ with $\|h\|< r$, and so $x$ is a Fr\'echet smooth point.

Finally, we show that \ref{item: singleton}$\implies$\ref{item: true face} holds under each of the two additional assumptions. Suppose first that $X$ is (VI)-polyhedral. Then, by Lemma~\ref{lemma: VI iff D(y)} $x$ has a neighbourhood $V$ such that $\D(y)\subseteq \D(x)=\{f\}$ for every $y\in V\cap S_X$. In particular, $f(y)=1$ for every $y\in V\cap S_X$, proving that $x\in \inte_{S_X}(H\cap B_X)$. By Remark~\ref{rem: true faces}\ref{item: interiors coincide}, \ref{item: true face} holds.

Next, suppose that $S_X$ is covered by true faces. By assumption, $\D(x)=\{f\}$, so the only face $x$ belongs to is the face $\Gamma_f$ induced by $f$; further, as we already proved that \ref{item: singleton}$\implies$\ref{item: alg true face}, $x\in \ainte_H(\Gamma_f)$. On the other hand, $S_X$ is covered by true faces, so $\Gamma_f$ must be a true face, thus $\inte_H(\Gamma_f) \neq \emptyset$. However, as recalled in Remark~\ref{rem: true faces}\ref{item: alg and true face coincide for banach}, for a convex set $C$ with non-empty interior, $\inte (C)= \ainte(C)$. Thus, $x\in \inte_H(\Gamma_f)$, which is \ref{item: true face}.
\end{proof}

We now pass to the main result of the section.

\begin{theorem}\label{thm: structural in normed space} Let $X$ be a normed space with property \textup{($\Delta$)}. If $X$ is \textup{(K)}-polyhedral, then $S_X$ is covered by algebraic true faces of $B_X$. Moreover, if $X$ is \textup{(VI)}-polyhedral, then $S_X$ is covered by true faces of $B_X$.
\end{theorem}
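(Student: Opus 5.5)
Given $x\in S_X$, by ($\Delta$) the set $\ext\D(x)=\{f_1,\dots,f_n\}$ is finite, and by Krein--Milman $\D(x)=\conv\{f_1,\dots,f_n\}$. The plan is to find a point $z\in S_X$ near $x$ with $\D(z)=\{f_i\}$ for some single $i$ with $f_i(x)=1$. Proposition~\ref{prop: poly VS smooth} (\ref{item: singleton}$\Rightarrow$\ref{item: alg true face}) then gives $z\in\ainte_H(H\cap B_X)$ with $H=f_i^{-1}(1)$. Since $f_i(x)=1$, the face $F=B_X\cap f_i^{-1}(1)$ is an algebraic true face containing $x$. It is enough to produce such a $z$ in some finite-dimensional section.

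\textbf{Key steps.} Since the $f_i$ are distinct, choose a finite-dimensional subspace $E\ni x$ on which the restrictions $f_i\cut_E$ are pairwise distinct. By (K), $B_E$ is a polytope, so $\D_E(x)$ has finitely many extreme points. By Fact~\ref{fact: basic D(x)}\ref{i: extend extreme} each of them extends to some element of $\ext\D_X(x)$, i.e.\ to some $f_i$; conversely each $f_i\cut_E\in\D_E(x)$. Choose a facet $P$ of $B_E$ containing $x$, with facet functional $g$ extreme in $B_{E^*}$ and $g(x)=1$, so $g=f_i\cut_E$ for some $i$, say $i=1$. Take $z\in\inte_E$-relative interior of $P$, near $x$ (e.g.\ on the segment from $x$ to the barycentre of $P$). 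Then $\D_E(z)=\{g\}$.

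The difficulty is that $\D_X(z)$ may be much larger than $\D_X(x)$, since $z\ne x$ and ($\Delta$) at $z$ gives another finite set. To handle this, iterate. Let $\ext\D_X(z)=\{h_1,\dots,h_m\}$. Each $h_j$ restricts on $E$ to an element of $\D_E(z)=\{g\}$, so $h_j\cut_E=g$ and $h_j(x)=g(x)=1$, i.e.\ $h_j\in\D_X(x)$. Hence $\D_X(z)\subseteq\D_X(x)$, and by Fact~\ref{fact: basic D(x)}\ref{i: ext D(y)}, $\ext\D_X(z)\subseteq\{f_1,\dots,f_n\}$. Since the $f_i\cut_E$ are distinct and all equal $g$ here, only $f_1$ survives, so $\D_X(z)=\{f_1\}$. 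This gives exactly the $z$ needed, and the first claim follows.

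For the second claim, assume (VI). The point $z$ satisfies \ref{item: singleton}, and by the last part of Proposition~\ref{prop: poly VS smooth} it satisfies \ref{item: true face}, so $F$ has nonempty relative interior and is a true face containing $x$.

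\textbf{Main obstacle.} The crux is choosing $E$ so that restrictions separate the finitely many $f_i$, and then showing that every extreme functional at $z$ restricts to $g$. That second point needs $z$ to lie in the relative interior of a facet of the polytope $B_E$, which is exactly where (K) is used. The extension fact ensures that the facet functional is the restriction of some $f_i$. I expect the most care to go into checking that $\D_X(z)\subseteq\D_X(x)$, which uses only $h_j\cut_E=g$ and $x\in E$.
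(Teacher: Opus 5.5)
Your argument is correct, and it takes a genuinely different route from the paper's.

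The paper proceeds by induction on $k=|\ext\D_X(x)|$ and uses only $2$-dimensional sections. If $k>1$, it takes a plane $Y$ in which $x$ is a vertex of the polygon $B_Y$, extends one edge functional $g_1$ to some $f_1\in\ext\D_X(x)$, and moves to a point $x_2$ inside the other edge. Then $\D_X(x_2)\subseteq\D_X(x)$ and $f_1\notin\D_X(x_2)$, so $|\ext\D_X(x_2)|<k$, and ($\Delta$) acts as a finite counter.

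You instead use ($\Delta$) only to choose one finite-dimensional section $E$ that separates the finitely many $f_i$, and then reach a smooth point in a single step. The mechanism $\D_X(z)\subseteq\D_X(x)$ is the same as in the paper, but separation collapses $\ext\D_X(z)$ to one functional at once. So no induction is needed (and, despite your wording, no iteration), and ($\Delta$) at $z$ is never used.

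The paper's route needs nothing beyond the geometry of polygons already encoded in Proposition~\ref{prop: poly VS smooth}. Yours relies on standard facial structure of higher-dimensional polytopes: every boundary point lies in a facet, facet functionals are extreme in $B_{E^*}$, and relative-interior points of a facet have a unique supporting functional. In exchange it gives more information. Every facet functional of $B_E$ at $x$ extends to a functional defining an algebraic true face. Moreover, if you choose $E$ so that restriction is injective on $\spann\{f_1,\dots,f_n\}$, then Hahn--Banach gives $\D_E(x)=\conv\{f_i\cut_E\}$ with every $f_i\cut_E$ extreme. Hence \emph{every} $f_i\in\ext\D_X(x)$ defines an algebraic true face through $x$. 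From the paper this would require combining the theorem with Theorem~\ref{thm: bdry iff alg-ST} and Lemma~\ref{lemma: boundary VS extreme points}.

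Two small points of rigour. First, state $\D_X(z)\subseteq\D_X(x)$ for every $h\in\D_X(z)$, not only for extreme ones; each such $h$ satisfies $h\cut_E\in\D_E(z)=\{g\}$. Second, conclude $\D_X(z)=\{f_1\}$ via Krein--Milman from $\emptyset\neq\ext\D_X(z)\subseteq\{f_1\}$.
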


The second part of the result follows from the first one, but it could also be deduced from the implication \ref{i: VI-Delta}$\implies$\ref{i: finite true faces} in Lemma~\ref{lemma: Delta-QP pts iff VI+Delta}.

\begin{proof} The second part of the theorem is a consequence of the first one, as in (VI)-polyhedral spaces algebraic true faces are true faces, by the last part of Proposition~\ref{prop: poly VS smooth}. For the first part, let $X$ be (K)-polyhedral and have property ($\Delta$). Since $X$ has property ($\Delta$), it is sufficient to prove that for every $k\in \N$ the following condition holds:
\begin{equation}\label{eq: algebraic true face}\tag{$F_k$}
\begin{split}
{\text{if $x\in S_X$ and $k=|\ext \D_X(x)|$, there exists $f\in\D_X(x)$}}\\
{\text{such that $f$ defines an algebraic true face of $B_X$.}\quad}
\end{split}
\end{equation}

Let us proceed by induction. By Proposition~\ref{prop: poly VS smooth}, ($F_1$) is clearly valid. Let $n\in\N$, suppose that ($F_k$) holds for $k=1,\ldots,n$, and let us prove that  ($F_{n+1}$) holds. Fix $x\in S_X$ such that $n+1=|\ext \D_X(x)|$. In particular, $|\ext \D_X(x)|>1$, thus $\D_X(x)$ is not a singleton. Thus, Proposition~\ref{prop: poly VS smooth} implies that there exists a 2-dimensional subspace $Y$ of $X$ such that $x\in\ext B_Y$. Since $B_Y$ is a polygon and $x\in\ext B_Y$, there exist $g_1,g_2\in S_{Y^*}$ such that $\ext \D_Y(x)=\{g_1,g_2\}$. Moreover, there exist $v_1,v_2\in S_Y$ such that $g_i^{-1} (1)\cap B_Y= [x,v_i]$ ($i=1,2$). Clearly, $g_1(v_2)<1$. Now, by Fact~\ref{fact: basic D(x)}\ref{i: extend extreme}, there exists $f_1\in\ext\D_X(x)$ such that $f_1\cut_Y=g_1$. Let $x_2$ be any point in the interval $(x,v_2)$ and note that:
\begin{enumerate}
    \item $\D_X(x_2)\subseteq \D_X(x)$ (and hence $\ext\D_X(x_2)\subseteq \ext\D_X(x)$, by Fact~\ref{fact: basic D(x)}\ref{i: ext D(y)});
    \item $f_1\not\in \D_X(x_2)$.
\end{enumerate}
So, $|\ext \D_X(x_2)|\leq n$ and by our induction hypothesis there exists $f\in\D_X(x_2)$ such that $f$ defines an algebraic true face of $B_X$. Since $f\in \D_X(x_2)\subseteq \D_X(x)$, condition ($F_{n+1}$) holds and the proof is concluded. 
\end{proof}

In particular, combining Fact~\ref{fact: IV iff D + V} with Theorem~\ref{thm: structural in normed space} we get that the unit sphere of any (IV)-polyhedral normed space is covered by true faces.

\begin{corollary}\label{cor: str thm for IV-poly} If $X$ is a \textup{(IV)}-polyhedral normed space, $S_X$ is covered by true faces.
\end{corollary}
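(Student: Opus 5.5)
The plan is to reduce the statement to Theorem~\ref{thm: structural in normed space} by way of Fact~\ref{fact: IV iff D + V}. Let $X$ be a \textup{(IV)}-polyhedral normed space. Fact~\ref{fact: IV iff D + V} splits this hypothesis into two properties: $X$ is \textup{(V)}-polyhedral, and $X$ satisfies \textup{($\Delta$)}. These two properties are exactly what the second part of Theorem~\ref{thm: structural in normed space} needs, once we also know that \textup{(V)} implies \textup{(VI)}.

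The first step is to place $X$ in the correct class of polyhedral spaces. By the chain of implications in Remark~\ref{rmk: poly def}, namely \textup{(IV)}$\implies$\textup{(V)}$\implies$\textup{(VI)}$\implies$\textup{(K)}, the space $X$ is \textup{(VI)}-polyhedral. In particular it is also \textup{(K)}-polyhedral.

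The second step is to record property \textup{($\Delta$)}, which is the forward direction of Fact~\ref{fact: IV iff D + V}. Concretely, it says that $\ext \D(x)$ is finite for every $x\in S_X$. This can also be seen directly. Suppose $\D(x)\cap \ext B_{X^*}$ were infinite. By $w^*$-compactness of $\D(x)$, it would have a $w^*$-cluster point $f\in(\ext B_{X^*})'$. Since $\D(x)$ is $w^*$-closed, $f\in\D(x)$, so $f(x)=1$, which contradicts \textup{(IV)}.

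The third step is to apply Theorem~\ref{thm: structural in normed space}. The space $X$ has \textup{($\Delta$)} and is \textup{(VI)}-polyhedral, so the second part of that theorem shows that $S_X$ is covered by true faces of $B_X$.

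There is no real obstacle here. The only point needing care is checking that the hypotheses match exactly: we need \textup{($\Delta$)} together with \textup{(VI)}, and not merely \textup{(K)}. This is ensured by the chain (IV)$\implies$(VI) and by Fact~\ref{fact: IV iff D + V}.
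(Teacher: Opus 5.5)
Your proposal is correct and follows the paper's own argument: Fact~\ref{fact: IV iff D + V} gives \textup{(V)}-polyhedrality and \textup{($\Delta$)}, \textup{(V)}$\implies$\textup{(VI)}, and the second part of Theorem~\ref{thm: structural in normed space} then applies. Your direct check that \textup{(IV)} forces \textup{($\Delta$)}, via a $w^*$-cluster point in the $w^*$-compact set $\D(x)$, is also valid. It is a self-contained substitute for citing the forward direction of Fact~\ref{fact: IV iff D + V}.
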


\begin{rem}\label{rmk: true faces from hat X} Another sufficient condition for $S_X$ being covered by true faces is that the completion $\hat X$ of $X$ is polyhedral. In fact, $S_{\hat X}$ is covered by true faces of $B_{\hat X}$ and their restrictions to $S_X$ cover it. Further, $S_X$ is dense in $S_{\hat X}$, so the interior of each true face of $S_{\hat X}$ intersects $S_X$; thus, the restrictions of the true faces of $S_{\hat X}$ are true faces of $S_X$.
\end{rem}

\begin{example} Let us give a simple example to which the above remark applies, but Theorem~\ref{thm: structural in normed space} doesn't. Consider the renorming $\nn$ of $c_0$ given in \cite{InfPoly}*{Example~4.5}. It is shown there that $(c_0,\nn)$ is (K)-polyhedral and Step 3 there actually shows that $(c_{00},\nn)$ is not (VI)-polyhedral. Yet, its unit sphere is covered by true faces, by Remark~\ref{rmk: true faces from hat X}. In particular, even for normed spaces with countable dimension, the validity of the structural theorem does not imply (VI)-polyhedrality.
\end{example}

We next give a first application to the isomorphic theory of separable polyhedral spaces. Recall that every separable polyhedral Banach space admits a (IV)-polyhedral renorming (see, \emph{e.g.}, \cite{InfPoly}*{Theorem 1.6}). While we don't know if this is true for separable polyhedral normed spaces, we can prove it for (VI)-polyhedral normed spaces with ($\Delta$).

\begin{corollary}\label{cor: C^infty renorm} Every separable \textup{(VI)}-polyhedral normed space $(X,\n)$ with \textup{($\Delta$)} admits a \textup{(IV)}-polyhedral renorming (which approximates $\n$). Moreover, the norm $\n$ can be approximated by $C^\infty$-smooth LFC norms, and by analytic norms.
\end{corollary}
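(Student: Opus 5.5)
The plan is to reduce everything to the existence of a countable boundary, after which known renorming results from the separable Banach theory apply; the catch is that those results are stated for complete spaces, so some care is needed. By Theorem~\ref{thm: structural in normed space}, $S_X$ is covered by true faces of $B_X$. Let $\B_0$ be the set of functionals $f\in S_{X^*}$ such that $f^{-1}(1)\cap B_X$ is a true face; it is a boundary. The first step is to show that $\B_0$ is countable. Distinct true faces have disjoint relative interiors (Remark~\ref{rem: true faces}\ref{item: distinct faces}). Hence the sets $\R^+\inte_{H_f}(F_f)$, $f\in\B_0$, are nonempty, pairwise disjoint and open in $X$ (each is a cone over a relatively open subset of a hyperplane missing the origin). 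Since $X$ is separable, any family of pairwise disjoint nonempty open sets is countable, so $|\B_0|\leq\aleph_0$.

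The second step produces a (IV)-polyhedral renorming approximating $\n$. Enumerate $\B_0=\{f_n\}_{n=1}^\infty$ and define
\[ \vertt{x}\coloneqq \sup_n (1+\e_n) f_n(x), \]
with $\e_n\downarrow 0$ chosen in the spirit of the standard Banach-space argument (cf.\ \cite{InfPoly}*{Theorem~1.6} and the Appendix). One must check three things. First, $\vertt{\cdot}$ is $(1+\e_1)$-equivalent to $\n$, since $\B_0$ is a boundary. Second, its dual-ball extreme points lie in $\{(1+\e_n)f_n\}$ together with $w^*$-cluster points, and those cluster points have norm at most $1$ in the original dual norm. Third, (IV) holds. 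For this, if $g$ is a cluster point and $\vertt{x}=1$, then $g(x)\leq\|x\|<\vertt{x}$ provided $x$ is normed by some $f_n$ with the factor $1+\e_n>1$. That is automatic, because the boundary property gives $f_n(x)=\|x\|$ for some $n$, hence $\vertt{x}\geq(1+\e_n)\|x\|>\|x\|$.

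The third step handles smoothness. Once $X$ has a norm with a countable boundary, the standard constructions of \cite{Hajek_PAMS} ($C^\infty$-smooth LFC approximations) and \cite{devillefonfhajek}*{Corollary~3.2} (analytic approximations) apply. These constructions are built from countably many functionals forming a boundary together with a (IV)-type separation, and they do not use completeness in an essential way. Alternatively, I would pass to the completion: the (IV)-polyhedral norm just built has a countable boundary $\{(1+\e_n)f_n\}$, and this set remains a boundary for $\hat X$. One must verify this, using the quantitative gap furnished by (IV) and the factors $1+\e_n$ to show the supremum is attained on $\hat X$. The completion is then a polyhedral Banach space with countable boundary, the cited theorems give smooth approximations there, and these restrict to $X$.

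The main obstacle is this last transfer. Completeness might be used in \cite{Hajek_PAMS} or in \cite{devillefonfhajek}, and the boundary property might fail on $\hat X$. I would first check whether the cited proofs only need a countable boundary satisfying a (IV)-type condition, which should hold verbatim for normed spaces.
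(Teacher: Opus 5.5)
Your main line is the paper's own proof. The structural theorem (Theorem~\ref{thm: structural in normed space}) plus separability gives a countable minimal boundary. Fact~\ref{fact: ctble boundary and IV} then gives the approximating (IV)-polyhedral norm. The $C^\infty$-smooth LFC and analytic approximations are quoted from Hájek's theorem and from the proof of Deville--Fonf--Hájek, applied directly to the normed space $X$ with its countable boundary.

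A few comments on the parts you fill in:
\begin{itemize}
\item Your argument with the pairwise disjoint open cones $\R^+\inte_{H_f}(F_f)$ correctly supplies the detail behind the paper's ``by separability''.
\item Your verification of (IV) is also fine, and slightly more economical than the appendix. Cluster points $g$ satisfy $g\leq\n$ because $\e_n\to 0$, while $\vertt{x}\geq(1+\e_n)\|x\|>\|x\|$ for $x\neq 0$.
\item There is one slip in the formula. $\B_0$ is symmetric, and an arbitrary enumeration gives $f$ and $-f$ different weights. Then $\sup_n(1+\e_n)f_n(x)$ is only the gauge of a non-symmetric body, not a norm. Enumerate $\B_0=\{\pm f_n\}_{n=1}^\infty$ and use $\sup_n(1+\e_n)|f_n(x)|$, as in the appendix.
\end{itemize}

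Your fallback of passing to the completion is false in the generality of the statement and should be discarded. Take $c_{00}$ with a (IV)-polyhedral norm equivalent to $\n_1$, which exists by Lemma~\ref{lemma: ctble dim => IV}. By Fact~\ref{fact: IV iff D + V} (and (V)$\implies$(VI)) it is a separable (VI)-polyhedral normed space with ($\Delta$), so it satisfies the hypotheses. Yet its completion, also after your renorming, is isomorphic to $\ell_1$.

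This defeats the fallback in two ways:
\begin{itemize}
\item No countable set can be a boundary for an equivalent norm on $\ell_1$. Otherwise Fact~\ref{fact: ctble boundary and IV}, applied to the Banach space $\ell_1$, would give it a polyhedral renorming, which does not exist. So the ``quantitative gap'' you hoped for cannot make $\{(1+\e_n)f_n\}$ a boundary of $\hat X$.
\item $\ell_1$ admits no equivalent Fréchet smooth norm at all, since its dual is non-separable. So there is no smooth norm on $\hat X$ to restrict to $X$.
\end{itemize}
The smooth and analytic norms must therefore be built on $X$ itself. This is exactly why the paper relies on the cited constructions being valid for normed spaces with a countable boundary, which is your first option.
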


\begin{proof} If $X$ is a separable (VI)-polyhedral normed space with ($\Delta$), Theorem~\ref{thm: structural in normed space} implies that $S_X$ is covered by true faces. By separability, these true faces form a countable set, hence the minimal boundary of $X$ is countable; in particular, $(X,\n)$ has a countable boundary. Fact~\ref{fact: ctble boundary and IV} then yields that $X$ has a (IV)-polyhedral norm and the approximation claim.

For the second part, the fact that every norm with countable boundary can be approximated by a $C^\infty$-smooth LFC norm is proved in \cite{Hajek_PAMS}*{Theorem~1} (see, \emph{e.g.}, \cite{HJ}*{Theorem~5.99}); instead, the analytic approximation follows from the proof of \cite{devillefonfhajek}*{Theorem~3.1}. \end{proof}

In conclusion to this section, we move to the connection between algebraic true faces and $\wexp(B_{X^*})$ being a boundary. The characterisation we obtain in Theorem~\ref{thm: bdry iff alg-ST} in particular gives another sufficient condition for the validity of the algebraic form of the structural theorem, which we will use in Example~\ref{ex: no ctble bdry} for a space which doesn't have property ($\Delta$) (and thus Theorem~\ref{thm: structural in normed space} doesn't apply). We begin with a folklore fact valid for all normed spaces.

\begin{fact}\label{fact: bdry} Let $X$ be a normed space and $\B$ be a boundary for $X$. Then, the following are equivalent:
\begin{enumerate}
    \item\label{bdry: minimal} no proper subset of $\B$ is a boundary;
    \item\label{bdry: singleton} for each $f\in \B$ there exists $x\in S_X$ with $\D(x)= \{f\}$;
    \item\label{bdry: B= w-exp} $\B= \wexp(B_{X^*})$;
    \item\label{bdry: smallest} $\B$ is contained in each boundary of $X$.
\end{enumerate}
\end{fact}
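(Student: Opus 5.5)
I would prove the cycle \ref{bdry: minimal}$\implies$\ref{bdry: singleton}$\implies$\ref{bdry: B= w-exp}$\implies$\ref{bdry: smallest}$\implies$\ref{bdry: minimal}. Two standard observations do most of the work. First, $\wexp(B_{X^*})$ is contained in every boundary. Indeed, if $f$ is $w^*$-exposed by $x\in X$ (necessarily $x\neq 0$), normalise so that $x\in S_X$. Then $f(x)=\|x\|=1$, so $f$ is the unique element of $B_{X^*}$ attaining its maximum at $x$, that is, $\D(x)=\{f\}$. Any boundary must contain some $g$ with $g(x)=1$, and then $g=f$. Second, the same reasoning gives the characterisation: $f\in \wexp(B_{X^*})$ if and only if $\D(x)=\{f\}$ for some $x\in S_X$.

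\ref{bdry: minimal}$\implies$\ref{bdry: singleton}. Fix $f\in\B$. By minimality, $\B\setminus\{f\}$ is not a boundary, so there is $x\in S_X$ such that no $g\in \B\setminus\{f\}$ satisfies $g(x)=1$. Since $\B$ is a boundary, we must have $f(x)=1$. I need $\D(x)=\{f\}$, and this is the main subtle step, because a priori $\D(x)$ could contain elements outside $\B$. My first attempt would be to show directly that $\D(x)=\{f\}$ for this $x$. If that fails, I would choose $x$ more cleverly, still using only the boundary property of $\B$. If no general argument works, the fallback is to check whether the intended statement already presumes something like $\B\subseteq \ext B_{X^*}$.

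\ref{bdry: singleton}$\implies$\ref{bdry: B= w-exp}. By the characterisation, \ref{bdry: singleton} gives $\B\subseteq\wexp(B_{X^*})$. The first observation gives $\wexp(B_{X^*})\subseteq\B$, since $\B$ is a boundary.

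\ref{bdry: B= w-exp}$\implies$\ref{bdry: smallest}. This is exactly the first observation.

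\ref{bdry: smallest}$\implies$\ref{bdry: minimal}. If $\B'\subsetneq\B$ were a boundary, then \ref{bdry: smallest} would give $\B\subseteq\B'$, a contradiction.

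The main obstacle is therefore the step \ref{bdry: minimal}$\implies$\ref{bdry: singleton}: showing that the point $x$ produced by minimality actually satisfies $\D(x)=\{f\}$, rather than just that $f$ is the only element of $\B$ attaining the norm at $x$.
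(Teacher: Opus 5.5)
Your implications \ref{bdry: singleton}$\implies$\ref{bdry: B= w-exp}$\implies$\ref{bdry: smallest}$\implies$\ref{bdry: minimal} are correct and coincide with the paper's argument, and so is your characterisation of $\wexp(B_{X^*})$ via $\D(x)=\{f\}$. The genuine gap is \ref{bdry: minimal}$\implies$\ref{bdry: singleton}, which you leave open. No choice of $x$ can close it, because in this generality the implication is false.

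Let $K\coloneqq\conv\{(s,t)\in\R^2\colon(|s|-1)^2+t^2\leq1\}$ (a ``stadium''). Let $X$ be $\R^2$ with $\|x\|\coloneqq\max_{f\in K}\langle f,x\rangle$, so that $B_{X^*}=K$ and $\D(x)$ is the set of maximisers of $x$ on $K$. Put $A\coloneqq\{(\pm(1+\cos\theta),\sin\theta)\colon|\theta|<\tfrac{\pi}{2}\}$, the two open semicircular arcs. One checks that $\D((0,\pm1))=[-1,1]\times\{\pm1\}$. For every other $x\in S_X$, $\D(x)$ is a single point of $A$, and every point of $A$ arises this way.

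Hence $\B\coloneqq A\cup\{(0,1),(0,-1)\}$ is a boundary no proper subset of which is a boundary. Yet no $x$ satisfies $\D(x)=\{(0,1)\}$, and $\wexp(B_{X^*})=A$ is not even a boundary. Also, $\B$ is not contained in the boundary $A\cup\{(\tfrac12,1),(\tfrac12,-1)\}$. So \ref{bdry: minimal} holds while \ref{bdry: singleton}--\ref{bdry: smallest} all fail.

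Taking $(1,\pm1)$ in place of $(0,\pm1)$ gives a minimal boundary contained in $\ext B_{X^*}$ with the same defect, so your fallback assumption $\B\subseteq\ext B_{X^*}$ does not help. The paper's own proof of this step makes exactly the leap you were wary of. It deduces that $\B\setminus\{f\}$ is a boundary from $\D(x)\setminus\{f\}\neq\emptyset$ for all $x$, which would actually require $\D(x)\cap(\B\setminus\{f\})\neq\emptyset$.

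What is missing is a mechanism forcing $\D(x)$ to be determined by $\D(x)\cap\B$. Polyhedral spaces are the only setting where the fact is actually used (Theorem~\ref{thm: bdry iff alg-ST} and the remark on $\ell_{1,0}$). There the mechanism is Lemma~\ref{lemma: boundary VS extreme points}: $\D(x)=\cconv^{w^*}(\D(x)\cap\B)$ for every boundary $\B$ and every $x\in S_X$. With it, your first attempt works verbatim: minimality gives $x\in S_X$ with $\D(x)\cap\B=\{f\}$, hence $\D(x)=\{f\}$.

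So the statement you can actually prove has two parts. First, \ref{bdry: singleton}$\implies$\ref{bdry: B= w-exp}$\implies$\ref{bdry: smallest}$\implies$\ref{bdry: minimal} in every normed space. Second, \ref{bdry: minimal}$\implies$\ref{bdry: singleton} whenever $\D(x)=\cconv^{w^*}(\D(x)\cap\B)$ for all $x\in S_X$, in particular whenever $X$ is polyhedral.
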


\begin{proof} For \ref{bdry: minimal}$\implies$\ref{bdry: singleton}, fix any $f\in \B$. If there is no $x\in S_X$ with $\D(x)= \{f\}$, then for all $x\in S_X$, we have that $\D(x)\setminus \{f\}\neq \emptyset$. Thus, $\B\setminus \{f\}$ is also a boundary, against the minimality of $\B$. Next, \ref{bdry: singleton} just means that $\B\subseteq \wexp(B_{X^*})$, thus equality holds, as $\wexp(B_{X^*})$ is contained in each boundary, thus proving \ref{bdry: B= w-exp}. Finally, \ref{bdry: B= w-exp}$\implies$\ref{bdry: smallest} is again the fact that $\wexp(B_{X^*})$ is contained in each boundary, and \ref{bdry: smallest}$\implies$\ref{bdry: minimal} is obvious.
\end{proof}

\begin{theorem}\label{thm: bdry iff alg-ST} For a polyhedral normed space $X$, the following are equivalent:
\begin{enumerate}
    \item\label{i: alg ST holds} $S_X$ is covered by algebraic true faces of $B_X$;
    \item\label{i: w-exp bdry} $\wexp(B_{X^*})$ is a boundary for $X$;
    \item\label{i: bdry minimal} there exists a boundary $\B$ for $X$ that is minimal with respect to inclusion.
\end{enumerate}
Moreover, in case \ref{i: bdry minimal} holds, the boundary $\B$ coincides with $\wexp(B_{X^*})$.
\end{theorem}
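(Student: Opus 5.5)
The plan is to prove the cycle \ref{i: alg ST holds}$\implies$\ref{i: w-exp bdry}$\implies$\ref{i: bdry minimal}$\implies$\ref{i: alg ST holds}, using Proposition~\ref{prop: poly VS smooth} to translate between algebraic true faces and smooth points, and Fact~\ref{fact: bdry} for minimal boundaries.

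For \ref{i: alg ST holds}$\implies$\ref{i: w-exp bdry}, fix $x\in S_X$. By assumption $x$ lies in some algebraic true face $F=f^{-1}(1)\cap B_X$ with $f\in \D(x)$, and $\ainte_H(F)\neq\emptyset$, where $H=f^{-1}(1)$. Choose $z\in \ainte_H(F)$. Then \ref{item: alg true face}$\implies$\ref{item: singleton} of Proposition~\ref{prop: poly VS smooth}, applied at $z$ with the functional $f\in\D(z)$, gives $\D(z)=\{f\}$. This says exactly that $z$ $w^*$-exposes $f$ in $B_{X^*}$: every $g\in B_{X^*}$ with $g\neq f$ has $g(z)<1=f(z)$. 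Hence $f\in\wexp(B_{X^*})$ and $f(x)=1$, so $\wexp(B_{X^*})$ is a boundary.

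For \ref{i: w-exp bdry}$\implies$\ref{i: bdry minimal}, recall that $\wexp(B_{X^*})$ is contained in every boundary. Therefore no proper subset of it can be a boundary, so it is minimal with respect to inclusion.

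For \ref{i: bdry minimal}$\implies$\ref{i: alg ST holds}, let $\B$ be a minimal boundary. Fact~\ref{fact: bdry} gives $\B=\wexp(B_{X^*})$, which also proves the ``moreover'' clause, and for each $f\in\B$ there is some $z\in S_X$ with $\D(z)=\{f\}$. Given $x\in S_X$, pick $f\in\B$ with $f(x)=1$ and the corresponding $z$. By \ref{item: singleton}$\implies$\ref{item: alg true face} of Proposition~\ref{prop: poly VS smooth}, applied at $z$, we get $z\in\ainte_H(f^{-1}(1)\cap B_X)$. So $f^{-1}(1)\cap B_X$ is an algebraic true face, and it contains $x$.

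The only delicate point is that Proposition~\ref{prop: poly VS smooth} is applied at the auxiliary point $z$, not at $x$; this is legitimate because $f\in\D(z)$ in both directions. This is also the only place where polyhedrality is used. Beyond that, everything is bookkeeping with the definition of $w^*$-exposed points.
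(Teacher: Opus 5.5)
Your proof is correct and is essentially the paper's argument. It uses the same cycle \ref{i: alg ST holds}$\implies$\ref{i: w-exp bdry}$\implies$\ref{i: bdry minimal}$\implies$\ref{i: alg ST holds}, with Proposition~\ref{prop: poly VS smooth} applied at an auxiliary point $z$ satisfying $\D(z)=\{f\}$, and Fact~\ref{fact: bdry} giving the identification $\B=\wexp(B_{X^*})$. Polyhedrality enters only through the implication \ref{item: singleton}$\implies$\ref{item: alg true face} used in the last step, while the converse implication used in the first step holds in every normed space.
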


\begin{proof} For \ref{i: alg ST holds}$\implies$\ref{i: w-exp bdry}, suppose that $S_X$ is covered by algebraic true faces, and let $\B$ be the set of functionals inducing these faces. Plainly, $\B$ is a boundary. Moreover, since each face has non-empty algebraic interior, by \ref{item: alg true face}$\implies$\ref{item: singleton} in Proposition~\ref{prop: poly VS smooth} for each $f\in \B$ there exists $x\in S_X$ with $\D(x)= \{f\}$. Thus, each $f\in \B$ is $w^*$-exposed, and so $\B\subseteq \wexp(B_{X^*})$. As $\B$ is a boundary, $\wexp(B_{X^*})$ is as well.

Next, \ref{i: w-exp bdry}$\implies$\ref{i: bdry minimal} holds because $\wexp(B_{X^*})$ is always contained in each boundary, so one can take $\B= \wexp(B_{X^*})$.

Finally, for \ref{i: bdry minimal}$\implies$\ref{i: alg ST holds}, let $\B$ be a boundary as in \ref{i: bdry minimal}. Fact~\ref{fact: bdry} implies that for each $f\in \B$ there is $x\in S_X$ with $\D(x)= \{f\}$; in turn, Proposition~\ref{prop: poly VS smooth} yields that $x$ belongs to the algebraic interior of the face induced by $f$. Thus, all faces induced by functionals in $\B$ are algebraic true faces and, by definition of boundary, they cover $S_X$. The fact that then $\B= \wexp(B_{X^*})$ is just Fact~\ref{fact: bdry}.
\end{proof}

\begin{rem}\label{rmk: str exp vs true faces} In parallel to the above equivalence it is very natural to hope for a boundary characterisation for the fact that $S_X$ is covered by true faces. The above result and the analogy with the case of Banach spaces give an obvious candidate, namely that $S_X$ is covered by true faces if and only if $\wstr(B_{X^*})$ is a boundary for $X$. However, this equivalence does not hold. More precisely, it remains valid that $\wstr(B_{X^*})$ is a boundary when $S_X$ is covered by true faces, but the converse is not true. The first fact follows as in the previous proof using \ref{item: true face}$\implies$\ref{item: frechet} of Proposition~\ref{prop: poly VS smooth}, while the second one will be shown in Example~\ref{ex: Frechet vs true face}.
\end{rem}

For a general normed space, it is clear that Theorem~\ref{thm: bdry iff alg-ST} does not hold; for instance, take $(\R^2,\n_2)$. More interesting is the fact that the theorem can also be false for spaces with a countable boundary.

\begin{example}\label{ex: c not poly} Consider the Banach space $c= C([0,\omega])$, and the boundary $\B\coloneqq\{\pm \delta_p \colon p\in [0,\omega]\}$. It is easy to check that $\B= \wexp(B_{C([0,\omega])^*})$, hence \ref{i: w-exp bdry} and \ref{i: bdry minimal} of Theorem~\ref{thm: bdry iff alg-ST} hold. On the other hand, the unit sphere of $C([0,\omega])$ is not covered by true faces (equivalently, by algebraic true faces, because of Remark~\ref{rem: true faces}\ref{item: alg and true face coincide for banach}). To see this, note that the set
\[ \bigcup_{p<\omega} \left(B_{C([0,\omega])}\cap (\pm \delta_p)^{-1}(1)\right)= \{f\in S_{C([0,\omega])}\colon |f(p)|=1 \text{ for some }p\in[0,\omega) \} \]
is a proper dense subset of $S_{C([0,\omega])}$. Thus, the face induced by $\delta_\omega$ is not a true face and \ref{i: alg ST holds} in Theorem~\ref{thm: bdry iff alg-ST} does not hold. Incidentally, this gives another proof that $c$ is not polyhedral.
\end{example}

\section{Locally finite tilings}\label{sec: tiling}
In this section we present our main application of the structural theorem from the previous section to the study of locally finite tilings; in particular, we prove Theorem~\ref{mth: tiling}. In the first part of the section, we study a property of a point in the boundary of a convex body that, when it holds for all points of the sphere, is equivalent to the fact that the space is (VI)-polyhedral and has ($\Delta$). This condition has been considered in different papers with different names, and part of our contribution also consists in systematising the notions.

We begin by recalling said geometric property, with the terminology taken from \cite{DRShilbert}*{Definition 3.7(ii)}, which in turn was inspired by \cite{dantashajekrusso}*{Lemma 2.6}. In the case of the unit ball, it was introduced as property (PH) by \cite{Reif}*{p.~143}, see also \cite{DP-Modena}*{Section~6}, or \cite{InfPoly}*{Section~5}. More precisely, in \cite{Reif} a space $X$ is said to be a (PH) space if each point of its unit sphere is a $\Delta$-QP point in our sense.

\begin{defi} Given a convex body $C$ in a normed space $X$ and a point $x\in \partial C$, we say that $x$ is a \emph{$\Delta$-QP point} for $C$ if there exist a neighbourhood $U$ of $x$ and functionals $f_1,\dots,f_n\in X^*$ such that 
\begin{equation} \label{eq: Delta-QP}
    C\cap U=\bigcap_{k=1}^n \{y\in U\colon f_k(y)\leq f_k(x)\}.
\end{equation}
\end{defi}

\begin{rem}\label{rmk: DeltaQP with 1} It is easy to check that, if $x$ and $f_1,\dots, f_n$ are as in the above definition, then each $f_k$ is a supporting functional for $C$ at $x$ (see the first paragraph in the proof of Lemma~\ref{lemma: DeltaQP iff locally finite faces}). Thus, if $0\in\inte(C)$, $f_k(x)\neq 0$ and, up to a scaling, one can assume that $f_k(x)=1$ for all $k=1,\dots, n$, which is exactly the formulation in \cite{dantashajekrusso}*{Lemma 2.6}.
\end{rem}

We start by a characterisation essentially stating that a point is a $\Delta$-QP point for $C$, if and only if around $x$ the boundary of $C$ is a finite union of true faces.

\begin{lemma}\label{lemma: DeltaQP iff locally finite faces} Let $C$ be a convex body in a normed space $X$ and $x\in \partial C$. Then, $x$ is a $\Delta$-QP point for $C$ if and only if there are an open neighbourhood $U$ of $x$ and finitely many true faces $\Gamma_1, \dots, \Gamma_n$ of $C$ that contain $x$ and such that
\begin{equation}\label{eq: bdry in n true faces}
    \partial C\cap U\subseteq \Gamma_1\cup \dots\cup \Gamma_n.
\end{equation}
Moreover, in this case, the only true faces of $C$ that intersect $U$ are $\Gamma_1, \dots, \Gamma_n$.
\end{lemma}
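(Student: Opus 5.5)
For the forward implication, I start from a neighbourhood $U$ (shrunk to be open and convex, e.g.\ an open ball around $x$) and functionals $f_1,\dots,f_n$ as in \eqref{eq: Delta-QP}. First I check that each $f_k$ supports $C$ at $x$, and I discard the redundant ones. For fixed $k$, the point $x$ lies on $\{f_k=f_k(x)\}$. If some $f_k$ were not supporting, there would be $y\in C$ with $f_k(y)>f_k(x)$; then points of the segment $(x,y]$ close to $x$ lie in $C\cap U$ but violate $f_k\le f_k(x)$, which contradicts \eqref{eq: Delta-QP}. Hence $C\subseteq\{f_k\le f_k(x)\}$ globally. Next I remove the indices $k$ for which $\Gamma_k\coloneqq C\cap f_k^{-1}(f_k(x))$ has empty relative interior in $H_k\coloneqq f_k^{-1}(f_k(x))$. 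The key claim is that after removing these (and merging duplicates, i.e.\ positive multiples), \eqref{eq: Delta-QP} still holds on a possibly smaller neighbourhood, and each remaining $\Gamma_k$ is a true face.

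To prove this claim I take a minimal subfamily for which \eqref{eq: Delta-QP} holds near $x$. By minimality, for each $k$ there is $z\in U$ with $f_j(z)\le f_j(x)$ for $j\neq k$ but $f_k(z)>f_k(x)$. I may take $z$ close to $x$, because the set defined by the other constraints is a cone-like set at $x$ intersected with $U$, so one can slide $z$ towards $x$. The segment from an interior point $c$ of $C\cap U$ to $z$ then crosses $H_k$ at a point $w$ with $f_j(w)<f_j(x)$ for all $j\neq k$. This uses $f_j(c)<f_j(x)$, which holds since $c\in\inte C$ and $f_j\ne0$. By continuity, a neighbourhood of $w$ in $H_k\cap U$ satisfies all the other inequalities strictly, and therefore lies in $C$. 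Hence $\inte_{H_k}(\Gamma_k)\neq\emptyset$, so $\Gamma_k$ is a true face, and it contains $x$. Finally, any $y\in\partial C\cap U$ must satisfy $f_k(y)=f_k(x)$ for some $k$: otherwise all the inequalities are strict at $y$, and by continuity a neighbourhood of $y$ would lie in $C$. This proves \eqref{eq: bdry in n true faces}.

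For the converse, write $\Gamma_k=C\cap H_k$ with $H_k=\{f_k=f_k(x)\}$ and $C\subseteq\{f_k\le f_k(x)\}$. I shrink $U$ to an open ball $B$ around $x$ and claim that $C\cap B=\bigcap_k\{y\in B: f_k(y)\le f_k(x)\}$. The inclusion $\subseteq$ is immediate. For $\supseteq$, let $y\in B$ satisfy all the inequalities and suppose $y\notin C$. Take $c\in\inte C\cap B$, which exists since $x\in\partial C=\overline{\inte C}\setminus\inte C$. The segment $[c,y]$ lies in $B$ by convexity and meets $\partial C$ at some point $p\neq y$. By \eqref{eq: bdry in n true faces}, $p\in\Gamma_k$ for some $k$, so $f_k(p)=f_k(x)$. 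But $f_k(c)<f_k(x)$ and $f_k(y)\le f_k(x)$, and $p$ is a strict convex combination of $c$ and $y$ with positive weight on $c$, so $f_k(p)<f_k(x)$, a contradiction.

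For the ``moreover'' part, let $\Gamma$ be any true face meeting $U$. Its relative interior is open in $\partial C$ by Remark~\ref{rem: true faces}\ref{item: interiors coincide}. I claim it meets $U$: pick $u\in\Gamma\cap U$ and a relative interior point $q$ of $\Gamma$; points of $(q,u]$ near $u$ are relative interior points of $\Gamma$ lying in $U$. Then $\inte_{\partial C}\Gamma\cap U$ is a nonempty relatively open subset of $\partial C$ covered by the finitely many closed sets $\Gamma_k$. By Baire on a finite union, or simply because a finite union of closed sets with empty interior has empty interior, some $\Gamma_k$ contains a nonempty relatively open subset of $\inte_{\partial C}\Gamma$. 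Remark~\ref{rem: true faces}\ref{item: distinct faces} (distinct true faces have disjoint algebraic interior from one another) then forces $\Gamma=\Gamma_k$.

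The main obstacle will be the step in the forward direction showing that, after minimal reduction, each face has nonempty relative interior, in particular that the witness $z$ can be chosen inside $U$ near $x$. If needed, I will handle this by replacing $U$ with a small ball and using the positive homogeneity of the constraint set around $x$.
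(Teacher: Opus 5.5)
Your proof is correct. The converse direction is the paper's argument: a segment from an interior point of $C$ to a point $y\notin C$ meets $\partial C$ at a point where every $f_k$ is strictly below $f_k(x)$. The ``moreover'' part is also essentially the paper's.

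The genuine difference is in the forward direction. The paper keeps all the functionals and notes that $\partial C\cap U$ is covered by the finitely many relatively closed sets $C\cap H_k$. It then discards those with empty interior in $\partial C\cap U$. The survivors still cover, because their union is closed and dense (a finite union of closed sets with empty interior has empty interior). The survivors are true faces by Remark~\ref{rem: true faces}\ref{item: interiors coincide}.

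You instead pass to an irredundant subfamily of constraints. You then use the non-redundancy witness $z$ and an interior point $c$ to produce explicitly a point $w\in H_k$ at which all other constraints are strict, which gives a relative interior point of $\Gamma_k$.
\begin{itemize}
\item The paper's route is shorter, purely topological, and keeps the original neighbourhood.
\item Yours is constructive and gives a slightly sharper by-product. Every functional in an irredundant $\Delta$-QP representation defines a true face, so the representation can be taken to consist of true-face functionals only. This is the analogue of facets corresponding to irredundant inequalities for polytopes.
\end{itemize}

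A few small points.
\begin{itemize}
\item The witness $z$ comes for free inside the convex neighbourhood on which the minimal family works: just apply the failure of the representation for the smaller subfamily on that neighbourhood itself. So no homogeneity or sliding argument is needed.
\item Members of the minimal family are nonzero, since the witness satisfies $f_j(z)>f_j(x)$. You use this when claiming $f_j(c)<f_j(x)$.
\item In the ``moreover'' part the segment should be $[q,u)$ rather than $(q,u]$, since $u$ itself need not be a relative interior point.
\item The passage to an open subset there is unnecessary. Remark~\ref{rem: true faces}\ref{item: distinct faces} says that $\ainte_H(\Gamma)$ misses every other face entirely, not just its interior. So a single point of $\inte_H\Gamma\cap U$ lying in some $\Gamma_k$ already forces $\Gamma=\Gamma_k$.
\end{itemize}
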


\begin{proof} We first prove the `$\implies$' implication, so we assume that $x\in \partial C$ is a $\Delta$-QP point. Take a neighbourhood $U$ and functionals $f_1,\dots, f_n\in X^*$ such that \eqref{eq: Delta-QP} holds; without loss of generality, we assume that $U$ is open. Consider the closed half-spaces $\Sigma_k\coloneqq \{y\in X\colon f_k(y)\leq f_k(x)\}$. We first note that $C\subseteq \Sigma_k$ for each $k=1,\dots, n$. In fact, otherwise there is $z\in C$ such that $f_k(z)> f_k(x)$. For each $\lambda \in (0,1)$, the vector $\lambda z+ (1-\lambda)x\in C $ also satisfies $f_k(\lambda z+ (1-\lambda)x) > f_k(x)$. Moreover, if $\lambda>0$ is chosen small enough, $\lambda z+ (1-\lambda)x\in U$, contradicting \eqref{eq: Delta-QP}.

Thus, $C\subseteq \Sigma_k$ for each $k$, hence the hyperplane $H_k\coloneqq \partial \Sigma_k$ is a supporting hyperplane for $C$ at $x$. Our assumption gives
\begin{equation}\label{eq: C inside Sigma_k}
    C\cap U= \bigcap_{k=1}^n (\Sigma_k\cap U),
\end{equation}
thus also
\[ \partial C\cap U\subseteq \bigcup_{k=1}^n (C \cap H_k) \]
(in fact, if $z\in \partial C\cap U \subseteq C\cap U$ belongs to no $H_k$, by \eqref{eq: C inside Sigma_k} it must belong to $\inte \Sigma_k$ for each $k$, thus again by \eqref{eq: C inside Sigma_k} also to $\inte C$, a contradiction). Hence, $\partial C\cap U$ is covered by finitely many closed sets, and so also by those ones having non-empty interior (in fact, the union of those sets with non-empty interior is by definition dense and closed as a finite union of closed sets). However, if $C \cap H_k$ has non-empty interior in $\partial C\cap U$, then it is a true face for $C$ by Remark~\ref{rem: true faces}\ref{item: interiors coincide}, which proves \eqref{eq: bdry in n true faces}.

Moreover, to show that no other true face intersects $U$, suppose that a true face $\Gamma$ intersects $U$, and let $H$ be the hyperplane inducing $\Gamma$. As $U$ is open, it must intersect the relative interior $\inte_H \Gamma$ of $\Gamma$. However, if $y\in \inte_H \Gamma \cap U$, by \eqref{eq: bdry in n true faces} there exists $k=1,\dots, n$ such that $y\in \Gamma_k$. Thus the relative interior of $\Gamma$ intersects $\Gamma_k$, and Remark~\ref{rem: true faces}\ref{item: distinct faces} gives $\Gamma= \Gamma_k$, as desired.

For the converse implication, we can assume that the neighbourhood $U$ is convex. Let $f_k\in X^*$ be the functional inducing the true face $\Gamma_k$; $f_k$ is a supporting functional for $C$ at the point $x$. Hence, we have
\[ C\cap U \subseteq \bigcap_{k=1}^n \{y\in U\colon f_k(y)\leq f_k(x)\}. \]
If the above containment is strict, we can find some point $y\in U$ such that $f_k(y)\leq f_k(x)$ for $k=1, \dots, n$, but $y\notin C$. Take any $z\in \inte (C) \cap U$ and let $w\in [y,z]$ be the point that belongs to $\partial C$. Since $z\in \inte C$, we have that $f_k(z)< f_k(x)$ for each $k=1,\dots, n$, hence $f_k(w)< f_k(x)$ as well (note that $w$ belongs to the interior of the segment $[y,z]$). On the other hand, the convexity of $U$ implies that $w\in U$. Thus, $w\in \partial C \cap U$, but it does not belong to any true face $\Gamma_k$, which contradicts \eqref{eq: bdry in n true faces}.
\end{proof}

As a consequence we obtain that, in a locally finite tiling, the points in the boundary of some tile are automatically $\Delta$-QP points. In turn, this yields that each body in a locally finite tiling is a polytope. This latter fact generalises \cite{KleeTri}*{Theorem~5.1}, where the same is proved for finite-dimensional spaces.

\begin{corollary}\label{cor: locally finite gives Delta QP} Let $\cal T$ be a locally finite tiling of a normed space $X$ by bounded convex bodies, and $C\in \cal T$. Then, each point $x\in \partial C$ is a $\Delta$-QP point for $C$. As a consequence, each body in $\cal T$ is a polytope.
\end{corollary}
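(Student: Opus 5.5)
The plan is to verify the characterisation of Lemma~\ref{lemma: DeltaQP iff locally finite faces} at each $x\in\partial C$, and then to deduce polytopality from the local description \eqref{eq: Delta-QP}.

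\emph{Step 1: reduction to finitely many tiles near $x$.} Fix $C\in\cal T$ and $x\in\partial C$. By Remark~\ref{rmk: loc finite tiling}\ref{i: protective}, the family $\cal T(x)=\{C=C_0,C_1,\dots,C_m\}$ is finite, and there is an open convex neighbourhood $U$ of $x$ with $U\subseteq C_0\cup\dots\cup C_m$. By Remark~\ref{rmk: loc finite tiling}\ref{i: regular}, we have $m\ge1$.

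\emph{Step 2: separation.} For each $j\ge1$ the interiors of $C$ and $C_j$ are disjoint. Both sets are convex bodies, so we can separate them: there is $f_j\in X^*\setminus\{0\}$ with $f_j\le f_j(x)$ on $C$ and $f_j\ge f_j(x)$ on $C_j$. Here we use that $x\in C\cap C_j$, so $x$ lies on the separating hyperplane.

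We claim that
\[ C\cap U=\bigcap_{j=1}^m\{y\in U\colon f_j(y)\le f_j(x)\}. \]
The inclusion $\subseteq$ is clear. For $\supseteq$, let $D$ be the right-hand side, which is convex. Suppose $y\in D\setminus C$. Pick $z\in \inte C\cap U$, which is nonempty since $x\in\partial C$ and $C$ is a convex body. Along the segment $[z,y]\subseteq D$, consider the points after the segment leaves $C$. They lie in $U\setminus C\subseteq\bigcup_{j\ge1}C_j$. Near the exit point $w\in\partial C$, such points $w'$ satisfy $f_j(w')<f_j(x)$ for every $j$. Indeed, since $f_j(z)<f_j(x)$ (as $z\in\inte C$) and $f_j(y)\le f_j(x)$, every point of the half-open segment $[z,y)$ has $f_j<f_j(x)$. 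This holds simultaneously for all $j$. Consequently such a $w'\notin C_j$ for any $j$, because $f_j\ge f_j(x)$ on $C_j$. This contradicts $w'\in U\setminus C$. The remaining case $w'=y$ itself only occurs if the whole segment minus $y$ lies in $C$; then $y\in C$ since $C$ is closed, again a contradiction. Hence the claim holds, and $x$ is a $\Delta$-QP point.

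\emph{Step 3: polytope.} Let $Y$ be a finite-dimensional subspace; translating, we assume $C\cap Y$ has nonempty relative interior, otherwise the claim reduces to a lower-dimensional affine subspace. By Step 2, every boundary point of $C\cap Y$ in $Y$ has a neighbourhood in which $C\cap Y$ is cut out by finitely many half-spaces. Since $C$ is bounded and $Y$ is finite-dimensional, the relative boundary of $C\cap Y$ is compact, so finitely many such neighbourhoods cover it. We then argue that $C\cap Y$ equals the intersection of all the finitely many half-spaces appearing, each of which contains $C$ by the first paragraph of the proof of Lemma~\ref{lemma: DeltaQP iff locally finite faces}. Suppose a point $p$ lies in this intersection but not in $C\cap Y$. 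Join $p$ to an interior point $q$; the segment crosses the boundary at some point, which lies in one of the neighbourhoods. Just beyond that point, the segment is still inside all the local half-spaces but outside $C$, contradicting the local description. Hence $C\cap Y$ is a finite intersection of half-spaces, that is, a polytope.

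The main obstacle is the $\supseteq$ inclusion in Step 2. It needs care with the choice of $U$ convex and contained in $\bigcup\cal T(x)$, and with the strict inequalities at the exit point. Step 3 is standard compactness.
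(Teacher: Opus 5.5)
Your proof is correct, but it takes a different route from the paper's.

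\textbf{First clause.} The paper does not prove it; it cites \cite{DRShilbert}*{Fact~3.9}, which says that a regular point of a tiling lying on $\partial C$ is a $\Delta$-QP point for $C$. You prove this fact directly. You separate $C$ from each of the finitely many tiles $C_1,\dots,C_m$ through $x$, and use $U\subseteq C\cup C_1\cup\dots\cup C_m$. Then every point of $[z,y)$ satisfies $f_j<f_j(x)$ for all $j$, so it lies in no $C_j$, hence lies in $C$. So the $m$ separating half-spaces cut out $C$ inside $U$.

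\textbf{Polytope part.} The paper passes to a finite-dimensional section and invokes Lemma~\ref{lemma: DeltaQP iff locally finite faces}: near each boundary point, $\partial C$ is a finite union of true faces. By compactness, $\partial C$ is then a finite union of true faces, hence $C$ is a polytope. You bypass true faces altogether. You show directly, by compactness and a segment argument, that a finite-dimensional section equals the intersection of the finitely many half-spaces collected. In effect you re-run the converse direction of Lemma~\ref{lemma: DeltaQP iff locally finite faces}, together with the finite-dimensional fact, left implicit in the paper, that a convex body whose boundary is a finite union of faces is a polytope.

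\textbf{Comparison.} Your version is self-contained and gives extra information. The functionals can be taken to be separators from the adjacent tiles, so at most $|\mathcal T(x)|-1$ of them are needed. The paper's version is shorter and stays in the true-face language used in Lemma~\ref{lemma: Delta-QP pts iff VI+Delta} and Theorem~\ref{thm: tiling iff VI+Delta}.

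\textbf{A small tidy-up in Step~3.} Rather than ``translating'', pass to the affine hull of $C\cap Y$. Also say explicitly that relative boundary points of $C\cap Y$ lie in $\partial C$, since points of $Y\cap\inte C$ are relative interior points. That is what lets Step~2 apply at those points.
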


\begin{proof} It is proved in \cite{DRShilbert}*{Fact~3.9} that if $x$ is a regular point for a tiling $\cal T$ and $x\in \partial C$ for some $C\in \cal T$, then $x$ is a $\Delta$-QP point for $C$. Since in a locally finite tiling each point is, by definition, regular, the first clause follows immediately.

For the second part, we will show that, if every point in $\partial C$ is a $\Delta$-QP point, then each finite-dimensional section of $C$ is a polytope\footnote{In the case when $C$ is the unit ball, this fact was proved in \cite{AmirDeutsch}*{Theorem~2.19}.} (namely, $C$ is a polytope itself). Since being a $\Delta$-QP point passes to subspaces, we can assume that $C$ is a bounded convex body in a finite-dimensional space. Lemma~\ref{lemma: DeltaQP iff locally finite faces} implies that, around each point, the boundary of $C$ is given by finitely many true faces. By compactness, the boundary of $C$ is a finite union of true faces, hence $C$ is a polytope.
\end{proof}

We now specialise the above results to the unit ball and obtain a characterisation of the normed spaces $X$ such that all points of $S_X$ are $\Delta$-QP points. Part of the characterisation is that those are exactly the (VI)-polyhedral normed spaces with property ($\Delta$), which was already known (see the references in the proof below). Let us mention that this characterisation is the motivation behind the name $\Delta$-QP point. In fact, in \cite{AmirDeutsch} (see also \cite{DP-Rocky}*{Section 5} or \cite{InfPoly}*{Section 2.5}) the notion of \emph{QP} point was introduced as a pointwise version of (VI)-polyhedrality, in the sense that $X$ is (VI)-polyhedral if and only if each point in $S_X$ is a QP point. Likewise, the notion of $\Delta$-QP point is the pointwise version of (VI)-polyhedrality with ($\Delta$).

\begin{lemma}\label{lemma: Delta-QP pts iff VI+Delta} For a normed space $X$, the following conditions are equivalent:
\begin{enumerate}
    \item\label{i: Delta-QP} every point of $S_X$ is a $\Delta$-QP point for $B_X$;
    \item\label{i: finite true faces} for each $x\in S_X$ there are a neighbourhood $U$ of $x$ and true faces $\Gamma_1, \dots, \Gamma_n$ containing $x$ and that cover $S_X\cap U$; further, no true face of $S_X$ different from $\Gamma_1, \dots, \Gamma_n$ intersects $U$;
    \item\label{i: VI-Delta} $X$ is a \textup{(VI)}-polyhedral normed space with property \textup{($\Delta$)};
    \item\label{i: K-LFC} $X$ is a \textup{(K)}-polyhedral normed space and its norm is LFC.
\end{enumerate}
\end{lemma}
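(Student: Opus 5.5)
The plan is to prove \ref{i: Delta-QP}$\iff$\ref{i: finite true faces}, then \ref{i: Delta-QP}$\implies$\ref{i: VI-Delta}$\implies$\ref{i: finite true faces}, and finally \ref{i: Delta-QP}$\iff$\ref{i: K-LFC}.

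\textbf{\ref{i: Delta-QP}$\iff$\ref{i: finite true faces}.} This is Lemma~\ref{lemma: DeltaQP iff locally finite faces} applied to $C=B_X$, using that $\partial B_X=S_X$. Its "moreover" clause gives the statement that no other true face meets $U$.

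\textbf{\ref{i: Delta-QP}$\implies$\ref{i: VI-Delta}.} Fix $x\in S_X$ and $U,f_1,\dots,f_n$ as in \eqref{eq: Delta-QP}. By Remark~\ref{rmk: DeltaQP with 1} we may take $f_k(x)=1$ and $f_k\in\D(x)$. Shrinking $U$ to a small ball around $x$, we can assume $\|y\|=\max_k f_k(y)$ for every $y$ in a conic neighbourhood of $x$. This follows by homogeneity, since near $x$ every point of $S_X\cap U$ satisfies $\max_k f_k=1$.
\begin{itemize}
\item[] \emph{Property ($\Delta$).} Any $g\in\ext\D(x)$ satisfies $g\le\max_k f_k$ on a neighbourhood of $x$, with equality at $x$. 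A standard separation/Hahn--Banach argument on the cone then shows $g\in\conv\{f_1,\dots,f_n\}$. Since $g$ is extreme in $B_{X^*}$, it must equal some $f_k$. Hence $\ext\D(x)\subseteq\{f_1,\dots,f_n\}$ is finite.
\item[] \emph{Property (VI).} For $y\in S_X$ close to $x$, put $I(y)=\{k: f_k(y)=1\}$. Then $I(x)$ is everything, and $\max_k f_k$ equals $1$ at both $x$ and $y$. If $k\in I(y)$, then $f_k\equiv1$ on $[x,y]$. This segment lies in the conic neighbourhood, so $\|\cdot\|\le 1$ there with equality attained, and $[x,y]\subseteq S_X$.
\end{itemize}

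\textbf{\ref{i: VI-Delta}$\implies$\ref{i: Delta-QP}.} Fix $x$, let $\ext\D(x)=\{f_1,\dots,f_n\}$, and let $V$ be the neighbourhood from Lemma~\ref{lemma: VI iff D(y)}, so that $\D(y)\subseteq\D(x)$ for $y\in V\cap S_X$. For $y$ in the cone $\R^+(V\cap S_X)$, some $g\in\ext\D(y/\|y\|)\subseteq\ext\D(x)$ attains the norm, by Fact~\ref{fact: basic D(x)}\ref{i: ext D(y)}. Therefore $\|y\|=\max_k f_k(y)$ on a neighbourhood $U$ of $x$, because $\R^+(V\cap S_X)$ contains a neighbourhood of $x$ once $V$ is open. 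This is exactly \eqref{eq: Delta-QP} with $f_k(x)=1$.

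\textbf{\ref{i: Delta-QP}$\iff$\ref{i: K-LFC}.}
\begin{itemize}
\item[] \emph{Forward.} The local formula $\|y\|=\max_k f_k(y)$ shows the norm is LFC. Corollary~\ref{cor: locally finite gives Delta QP} (second part, whose proof only uses that boundary points are $\Delta$-QP) gives (K)-polyhedrality.
\item[] \emph{Converse.} Fix $x$, a neighbourhood $U$ and $f_1,\dots,f_m$ witnessing the LFC property. Let $E=\spann\{x\}+F$, where $F$ is a finite-dimensional complement chosen so that the map $y\mapsto (f_k(y))_k$ restricted to $E$ has the same image as on $X$. The norm on $U$ factors through the projection onto $E$ along $\bigcap_k\ker f_k$, that is, $\|y\|=\|Py\|$ for $y\in U$ with $Py\in U$.
\item[] Since $B_E$ is a polytope by (K), $x$ is a $\Delta$-QP point of $B_E$, with functionals $g_1,\dots,g_n\in E^*$. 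Composing them with $P$ gives functionals in $X^*$ witnessing the $\Delta$-QP property of $x$ in $X$, after shrinking $U$ so that $P(U)$ lies in the relevant neighbourhood.
\end{itemize}

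\textbf{Main obstacle.} The delicate point is this converse. One must arrange that $P$ is continuous and fixes $x$, and that $x+\ker P$ stays inside the LFC neighbourhood locally. I expect to handle this by choosing $E\ni x$ with $E\cap\bigcap_k\ker f_k=\{0\}$ and replacing $U$ by a small ball whose image under $P$ is still near $x$.
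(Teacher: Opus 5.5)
\textbf{Where your route differs.} Your proofs of (a)$\iff$(b) and of (c)$\implies$(a) coincide with the paper's; the latter is the argument of Fact~\ref{fact: DeltaQP iff}. For (a)$\implies$(c) you take a different and more self-contained route.
\begin{itemize}
\item[] \emph{The paper} passes through (b). It reads off (VI) from the local cover by true faces, and obtains ($\Delta$) from Fact~\ref{fact: Delta iff B cap D finite}. That fact rests on the cited Lemma~\ref{lemma: boundary VS extreme points} and on Milman's converse of Krein--Milman.
\item[] \emph{Your argument} upgrades the local inequality $g\le\max_k f_k$ near $x$, with equality at $x$, to a global one. Since $g(x)=f_k(x)=1$, homogeneity gives $g\le \max_k f_k$ on all of $X$. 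Then separation puts $g$ in the subdifferential $\conv\{f_1,\dots,f_n\}$ of $\max_k f_k$ at $0$. This is correct and avoids the boundary machinery entirely.
\end{itemize}
You should still say why $\max_k f_k=1$ on $S_X\cap U$: with $U$ open, $\max_k f_k(y)<1$ would put a whole neighbourhood of $y$ inside $B_X$, contradicting $\|y\|=1$. Your forward direction of (a)$\iff$(d) is also fine; the paper simply cites \cite{dantashajekrusso}, and you could equally get (K) from the (VI) you already proved.

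\textbf{The gap in (d)$\implies$(a).} It sits exactly where you flagged the obstacle. Put $Y\coloneqq\bigcap_k\ker f_k$.
\begin{itemize}
\item[] Your first choice $E=\spann\{x\}+F$, with $F$ an arbitrary complement of $Y$, does not work. Unless $x\in F$, one has $E\cap Y\neq\{0\}$, so there is no projection onto $E$ along $Y$.
\item[] Your repair is the right one, and it is what the paper does: choose a complement $E$ of $Y$ with $x\in E$. But such an $E$ exists only if $x\notin Y$, and you never prove this. If $x\in Y$, then $Px=0$ and the whole construction collapses.
\item[] The missing observation is short. Suppose $x\in Y$. For $\lambda\neq 1$ close to $1$ we have $\lambda x\in U$ and $f_k(\lambda x)=0=f_k(x)$ for all $k$. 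The LFC property then forces $\|\lambda x\|=\|x\|$, which is absurd.
\end{itemize}
Once $x\notin Y$ is established, pick $E\ni x$ with $X=Y\oplus E$. Then $P$ is continuous, since it factors through $y\mapsto(f_k(y))_k$; moreover $Px=x$ and $f_k\circ P=f_k$. Your shrinking argument then goes through as you describe, giving essentially the paper's proof.
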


\begin{proof} The equivalence between \ref{i: Delta-QP} and \ref{i: finite true faces} is just a particular case of Lemma~\ref{lemma: DeltaQP iff locally finite faces}, while \ref{i: Delta-QP}$\implies$\ref{i: K-LFC} is the content of \cite{dantashajekrusso}*{Lemma 2.6} (\emph{cf}. Remark~\ref{rmk: DeltaQP with 1}). Further, the equivalence between \ref{i: Delta-QP} and \ref{i: VI-Delta} is sketched in \cite{InfPoly}*{Proposition 5.2}: in fact, as we mentioned above, property (PH) is just condition \ref{i: Delta-QP}. The same argument is also used in \cite{FonfLindVes}*{Observation 1.7}. Since both sources formally give the proof for Banach spaces, we repeat the argument in the Appendix (Fact~\ref{fact: DeltaQP iff}).

Thus, we only need to prove that \ref{i: K-LFC} implies \ref{i: Delta-QP}. Let $x_0\in S_X$ and suppose that there exist a neighbourhood $U$ of $x_0$ and functionals $f_1,\dots,f_n\in X^*$ such that $\|y\|= \|z\|$ for every $y,z\in U$ with $f_j(y)= f_j(z)$ for $j=1,\dots,n$. Let $Y\coloneqq \bigcap_{j=1}^n\ker f_j$; notice that $x_0\not\in Y$ (in fact, otherwise for $\lambda\neq 1$ sufficiently close to $1$, one would have $\lambda x_0\in U$ and $f_j(\lambda x_0)= f_j(x_0)$, thus $\|\lambda x_0\|= \|x_0\|$, a contradiction). Let $Z$ be a finite-dimensional subspace of $X$ such that $x_0\in Z$ and $X=Y\oplus Z$. Let us denote by $P_Z$ the canonical projection on $Z$ (with $\ker(P_Z) =Y$). By passing to a suitable neighbourhood of $x_0$ contained in $U$, if necessary, and taking into account that $Z$ is a finite-dimensional polyhedral space, we can suppose without loss of generality that:
\begin{enumerate}[(1)]
    \item\label{i: form of U} $U$ is of the form $x_0+\e B_{Z\oplus_\infty Y}$, for some $\e>0$;
    \item\label{i: x_0 deltaQP in Z} there exist functionals $z^*_1,\dots,z^*_m\in Z^*$ such that 
\begin{equation} \label{eq: Delta-QP in Z}
    B_Z\cap U=\bigcap_{k=1}^m \{w\in x_0+\e U_Z\colon z^*_k(w)\leq z^*_k(x_0)\}.
\end{equation}
\end{enumerate}
Consider the functional $g_k\coloneqq z_k^*\circ P_Y\in X^*$ ($k=1, \dots, m$). We now show that
\begin{equation} \label{eq: Delta-QP in X}
    B_X\cap U=\bigcap_{k=1}^m \{x\in  U\colon g_k(x)\leq g_k(x_0)\},
\end{equation}
which shows that $x_0$ is a $\Delta$-QP point and proves \ref{i: Delta-QP}.

Suppose that $x\in U$. By \ref{i: form of U}, we can write $x=x_0+ y+ z$, where $y\in Y$, $z\in Z$, and $\max\{ \|y\|, \|z\| \}< \e$. By the definition of $Y$ we have that $f_j(x)= f_j(x_0+z)$ for all $j=1, \dots, n$; thus $\|x\|= \|x_0+ z\|$. Hence, $x\in B_X$ if and only if $x_0+ z \in B_Z$, which by \eqref{eq: Delta-QP in Z} is equivalent to
\[ z^*_k(x_0+z)\leq z^*_k(x_0), \qquad k=1, \dots, m. \]
Since $P_Z(x_0)=x_0$ and $P_Z(x)=x_0+z$, this last condition is equivalent to 
\[ g_k(x)\leq g_k(x_0), \qquad k=1,\dots,m. \]
This shows that \eqref{eq: Delta-QP in X} holds and completes the proof.
\end{proof}

We can now move to the main result of the section.

\begin{theorem}\label{thm: tiling iff VI+Delta} For a normed space $X$, the following conditions are equivalent:
\begin{enumerate}
    \item\label{i: loc finite tiling} $X$ admits a locally finite tiling by bounded convex bodies;
    \item\label{i: VI + Delta norm} $X$ admits an equivalent norm that is \textup{(VI)}-polyhedral and has \textup{($\Delta$)};
    \item\label{i: K + LFC norm} $X$ admits an equivalent norm that is \textup{(K)}-polyhedral and LFC.
\end{enumerate}
In this case, $X$ admits a locally finite tiling by (bounded) polytopes, and with the property that one polytope is symmetric with respect to the origin. 

Finally, if $X$ is infinite dimensional and $\n$ is a \textup{(VI)}-polyhedral norm on $X$ with property \textup{($\Delta$)}, the cardinality of the tiling can be taken to be equal to the cardinality of the minimal boundary of $(X,\n)$.
\end{theorem}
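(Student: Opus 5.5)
The equivalence \ref{i: VI + Delta norm}$\iff$\ref{i: K + LFC norm} is immediate from Lemma~\ref{lemma: Delta-QP pts iff VI+Delta}, applied to the equivalent norm. So the work is in \ref{i: loc finite tiling}$\implies$\ref{i: VI + Delta norm} and in the converse construction, which also yields the final cardinality statement.

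For \ref{i: loc finite tiling}$\implies$\ref{i: VI + Delta norm}, take a locally finite tiling $\cal T$ by bounded convex bodies. I will produce a symmetric bounded convex body with $0$ in its interior all of whose boundary points are $\Delta$-QP points; its Minkowski functional is then an equivalent norm, and Lemma~\ref{lemma: Delta-QP pts iff VI+Delta} gives (VI)+($\Delta$). By translating, assume $0\in\inte C$ for some $C\in\cal T$. Set $D\coloneqq C\cap(-C)$, which is bounded and symmetric with $0\in\inte D$. Corollary~\ref{cor: locally finite gives Delta QP} says every point of $\partial C$ is a $\Delta$-QP point for $C$, and the same holds for $-C$. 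Now let $x\in\partial D$. If $x$ lies in $\inte C$, then near $x$ the set $D$ coincides with $-C$, and $x\in\partial(-C)$. The case $x\in\inte(-C)$ is symmetric. If $x$ lies in both boundaries, intersect the two local half-space descriptions~\eqref{eq: Delta-QP} on the intersection of the two neighbourhoods. In every case $x$ is a $\Delta$-QP point for $D$, which completes this direction.

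For the converse, fix a (VI)-polyhedral norm with ($\Delta$). By Theorem~\ref{thm: structural in normed space}, $S_X$ is covered by true faces, indexed by the minimal boundary $\B_0$. By Lemma~\ref{lemma: Delta-QP pts iff VI+Delta}\ref{i: finite true faces}, these faces are locally finite on the sphere. I would mimic Fonf's construction: the tiles are $B_X$ together with, for each $f\in\B_0$ and each $k\ge1$, sets of the form
\[ \{x\colon k\le \|x\|\le k+1,\ x\in \R^+\Gamma_f\} \]
(the truncated cones over the faces, with radial shells), possibly intersected with half-spaces to make them convex. Actually $\R^+\Gamma_f\cap\{k\le\|x\|\le k+1\}$ equals $\{x\colon \|x\|\le k+1\}\cap\{f(x)\ge k\}\cap\R^+\Gamma_f$. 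Convexity of this set is the delicate point.

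A cleaner option is Fonf/Klee's shell tiling: the tiles are the closures of the differences
\[ (k+1)B_X\cap\{f\ge k\}\setminus\bigcup_{g\ne f}\{\cdots\}, \]
that is, the sets $\{x\colon f(x)\ge \max(k, g(x)) \text{ for } g\in\B_0,\ \|x\|\le k+1\}$. Equivalently, each tile is
\[ T_{f,k}=\{x\colon \|x\|\le k+1,\ f(x)\ge k,\ f(x)=\sup_{g\in\B_0}g(x)\}. \]
The last condition reads $f(x)=\|x\|$, which is not convex in general. So instead I would define the tile as
\[ T_{f,k}=(k+1)B_X\cap\{f\ge k\}\cap \R^+\Gamma_f, \]
and use the fact that $\R^+\Gamma_f$ is a convex cone, being the cone over a convex set. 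Then $T_{f,k}$ is convex and bounded. For non-empty interior, I need $\inte(\R^+\Gamma_f)\ne\emptyset$, which follows since $\Gamma_f$ is a true face. The tiles cover $X$ because the faces cover $S_X$, and their interiors are disjoint by Remark~\ref{rem: true faces}\ref{item: distinct faces} together with the disjointness of the shells.

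Local finiteness is the main obstacle. Near a point $x$ with $\|x\|=r$, only the shells with $k\in\{\lfloor r\rfloor-1,\lfloor r\rfloor\}$ can meet a small neighbourhood. On the angular side, the neighbourhood $U$ of $x/\|x\|$ from Lemma~\ref{lemma: Delta-QP pts iff VI+Delta}\ref{i: finite true faces} meets only finitely many true faces, and $\R^+U$ is a neighbourhood of $x$ for $x\ne0$. The care needed is to check that a cone $\R^+\Gamma_g$ meeting $\R^+U$ forces $\Gamma_g$ to meet $U$, which holds by radial projection. This gives local finiteness with cardinality $|\B_0|\cdot\aleph_0=|\B_0|$ in infinite dimension, since $\B_0$ is then infinite.

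The polytope claim follows from Corollary~\ref{cor: locally finite gives Delta QP}. The symmetric tile is $B_X$ itself; for a norm obtained from the first direction, $D$ plays this role.
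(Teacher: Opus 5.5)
Your proof is correct. For \ref{i: VI + Delta norm}$\iff$\ref{i: K + LFC norm} and for \ref{i: VI + Delta norm}$\implies$\ref{i: loc finite tiling} it is essentially the paper's argument. The tiles $(k+1)B_X\cap\{f\ge k\}\cap\R^+\Gamma_f$ you finally settle on are exactly the paper's $T_{k,f}=f^{-1}([k,k+1])\cap\R^+\Gamma_f$, since $\|\cdot\|=f$ on $\R^+\Gamma_f$.

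Where you genuinely differ is \ref{i: loc finite tiling}$\implies$\ref{i: VI + Delta norm}. The paper builds the symmetrised family $\cal F=\{C_1\cap(-C_2)\}$ and proves that $\cal F$ is again a locally finite tiling. It then applies Corollary~\ref{cor: locally finite gives Delta QP} to the tile $\tilde C\cap(-\tilde C)\in\cal F$. You instead apply the corollary only to $C\in\cal T$, transport it to $-C$ via $y\mapsto -y$, and observe that the $\Delta$-QP property at $x$ survives intersecting two bodies. When $x\in\partial C\cap\partial(-C)$, you concatenate the two finite lists of functionals on $U_1\cap U_2$. In the one-sided cases, you shrink to a neighbourhood of $x$ contained in $\inte C$ (resp.\ $\inte(-C)$).

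This local argument is shorter and avoids checking that $\cal F$ covers $X$, is non-overlapping and is locally finite. What the paper's route buys is a new locally finite tiling, of the same cardinality as $\cal T$, that contains the symmetric body. This is reused in Fact~\ref{fact: card of tiling in Banach}.

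Two small repairs are needed in the spiderweb part.
\begin{enumerate}
\item Your aside that $\{x\colon f(x)=\|x\|\}$ is not convex is wrong. It is the sublevel set $\{\|\cdot\|-f\le 0\}$ of a convex function, namely $\R^+\Gamma_f\cup\{0\}$, so convexity is not delicate. What you do need to check is closedness, which follows from $T_{f,k}=\{x\colon \|x\|=f(x),\ k\le f(x)\le k+1\}$.
\item In the local finiteness step, $\R^+U$ is the wrong neighbourhood. A point $bu$ with $u\in U$ has radial projection $u/\|u\|$, which need not lie in $U$, so the implication ``$\R^+\Gamma_g$ meets $\R^+U$ $\Rightarrow$ $\Gamma_g$ meets $U$'' can fail. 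Use instead $\R^+(U\cap S_X)$ intersected with the open shell $\{\lfloor r\rfloor-1<\|y\|<\lfloor r\rfloor+1\}$, as the paper does with its set $W$. This is still a neighbourhood of $x$, by continuity of $y\mapsto y/\|y\|$ on $X\setminus\{0\}$, and your radial-projection argument then goes through.
\end{enumerate}
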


\begin{proof} The equivalence between \ref{i: VI + Delta norm} and \ref{i: K + LFC norm} is just Lemma~\ref{lemma: Delta-QP pts iff VI+Delta}. For the rest of the proof, we will proceed as follows. We first use a symmetrisation argument and Corollary~\ref{cor: locally finite gives Delta QP} to prove the implication \ref{i: loc finite tiling}$\implies$\ref{i: VI + Delta norm} (Step~\ref{step: tiling -> VI+Delta}). Next, we modify Fonf's `spiderweb' tiling from \cite{fonftiling}*{Theorem 2} to prove \ref{i: VI + Delta norm}$\implies$\ref{i: loc finite tiling}, proving along the way the second part of the theorem (Step~\ref{step: spiderweb}).

\begin{step}\label{step: tiling -> VI+Delta} The implication \ref{i: loc finite tiling}$\implies$\ref{i: VI + Delta norm}.
\end{step}
Suppose that $\cal T$ is a locally finite tiling of $X$ by bounded convex bodies. Up to a translation, we can assume without loss of generality that there is $\tilde{C}\in \cal T$ such that $0\in \inte \tilde{C}$. Consider the family
\[ \cal F\coloneqq \{C_1\cap (-C_2)\colon C_1, C_2\in \cal T,\, \inte (C_1\cap (-C_2))\neq \emptyset\}. \]
By definition, each element of $\cal F$ is a bounded convex body. We shall now show that $\cal F$ is additionally a locally finite tiling.

To begin with, we note that $\cal F$ is non-overlapping. Take distinct sets $C_1\cap (-C_2)$ and $C_1'\cap (-C_2')$ in $\cal F$ and assume for instance that $C_1 \neq C_1'$. Then, $\inte C_1$ and $\inte C_1'$ are disjoint, as $\cal T$ is a tiling. Thus the sets
\[ \inte(C_1\cap (-C_2))= \inte C_1 \cap \inte (-C_2) \quad\text{and}\quad \inte(C_1'\cap (-C_2'))= \inte C_1' \cap \inte (-C_2') \]
are also disjoint. The case when $C_2 \neq C_2'$ is identical, so $\cal F$ is non-overlapping.

Next, we show that $\cal F$ is a locally finite covering. Fix any $x\in X$. As $\cal T$ is locally finite, by Remark~\ref{rmk: loc finite tiling}\ref{i: protective} there are $r>0$ and $C_1,\dots, C_n\in \cal T$ such that $x\in C_j$ ($j= 1,\dots, n$) and the only elements in $\cal T$ that intersect $B(x,r)$ are $\{C_1, \dots, C_n\}$. By applying the same argument to the point $-x$ (and possibly taking a smaller $r>0$), there also are $C_1',\dots, C_m'\in \cal T$ such that $-x\in C_k'$ ($k= 1,\dots, m$) and the only elements in $\cal T$ that intersect $B(-x,r)$ are $\{C_1', \dots, C_m'\}$. Thus the only elements of $\cal F$ that can intersect $B(x,r)$ have the form $C_j \cap (-C_k')$, where $j=1,\dots, n$ and $k= 1,\dots, m$. This shows that $\cal F$ is locally finite. Moreover, since $\cal T$ is a covering, it also holds that
\[ B(x,r)\subseteq \bigcup_{j=1}^n C_j \qquad\text{and}\qquad B(x,r)\subseteq \bigcup_{k=1}^m (-C_k'). \]
Therefore, we deduce that
\begin{equation}\label{eq: ball covered by finitely many}
    B(x,r)\subseteq \bigcup_{\substack{1\leq j\leq n\\ 1\leq k\leq m}} C_j \cap (-C_k');
\end{equation}
hence, at least one of the sets in the right-hand side union must have non-empty interior, namely belong to $\cal F$. By construction, this set contains $x$, so $\cal F$ is a covering. This concludes the proof that $\cal F$ is a locally finite tiling.

Finally, recall that there exists $\tilde{C}\in \cal T$ such that $0\in \inte \tilde{C}$. Hence the set $B\coloneqq \tilde{C} \cap (-\tilde{C})$ belongs to $\cal F$ and it is additionally a symmetric neighbourhood of $0$. Therefore, there is a norm $\nn$ on $X$ such that $B$ is the unit ball of $(X,\nn)$. Since $\cal F$ is a locally finite tiling, Corollary~\ref{cor: locally finite gives Delta QP} yields that each point of $\partial B= S_{(X,\nn)}$ is a $\Delta$-QP point for $B$. In turn, Lemma~\ref{lemma: Delta-QP pts iff VI+Delta} implies that $(X,\nn)$ is a (VI)-polyhedral normed space with ($\Delta$), thus proving this implication.

\begin{step}\label{step: spiderweb} The implication \ref{i: VI + Delta norm}$\implies$\ref{i: loc finite tiling} and further properties of the tiling.
\end{step}
Suppose that $(X,\n)$ is a (VI)-polyhedral normed space with ($\Delta)$. By the structural theorem (Theorem~\ref{thm: structural in normed space}), its unit sphere is covered by true faces, hence the set $\B_0$ of all functionals that induce the true faces is the minimal boundary of $X$. For $f\in \B_0$, we indicate the corresponding true face by $\Gamma_f= B_X\cap f^{-1}(1)$. We then set
\[ T_{n,f}\coloneqq \{ax\colon x\in \Gamma_f,\, n\leq a\leq n+1 \} \]
and define the desired tiling to be
\[ \cal T\coloneqq \{T_{n,f}\colon n\in \N, f\in \B_0\}\cup\{B_X\}. \]
We shall show that $\cal T$ is a locally finite tiling by bounded convex bodies, which completes the proof. In fact, not only this yields the validity of \ref{i: loc finite tiling}, but each tile is also a polytope by Corollary~\ref{cor: locally finite gives Delta QP} and $B_X\in \cal T$ is obviously symmetric with respect to $0$. Finally, if $X$ is infinite dimensional, $\B_0$ is necessarily infinite. This implies that $|\cal T|= |\B_0|$, proving the last clause of the theorem.

To begin with, we can write
\[ T_{n,f}= f^{-1}([n,n+1]) \cap (\R^+ \Gamma_f), \]
where the cone $\R^+ \Gamma_f$ generated by $\Gamma_f$ is convex, because $\Gamma_f$ is convex. This shows that $T_{n,f}$ is closed and convex and that
\begin{equation}\label{eq: int T_nf}
\begin{split}
    \inte T_{n,f} &= \{ ax\colon x\in \inte \Gamma_f,\, n< a< n+1 \}\\
    &= \left\{ x\in X\colon \textstyle{\frac{x}{\|x\|}}\in \inte \Gamma_f,\, n< \|x\|< n+1 \right\}
\end{split}
\end{equation}
Thus, each element in $\cal T$ is a convex body. Next, $\cal T$ is a covering, because, for each $x\in X$ with $\|x\|>1$, there are $n\in \N$ with $n\leq \|x\|< n+1$ and $f\in \B_0$ such that $\frac{x}{\|x\|}\in \Gamma_f$ (as the true faces cover $S_X$). Thus, $x\in T_{n,f}$. Since, obviously, elements of norm at most $1$ belong to $B_X$, this shows that $\cal T$ is a covering. Further, the second equality in \eqref{eq: int T_nf} shows that $\cal T$ is non-overlapping. In fact, note first that no set $\inte T_{n,f}$ intersects $\inte B_X$. Moreover, if $x\in \inte T_{n,f}$, then $n$ is uniquely determined by the condition $n< \|x\|< n+1$, while $f$ is uniquely determined by $\frac{x}{\|x\|}\in \inte \Gamma_f$, as the sets $\inte \Gamma_f$ are mutually disjoint. Thus, $\cal T$ is a tiling by bounded convex bodies.

Finally, we show that $\cal T$ is locally finite. Fix any $x\in X$. If $\|x\|< 1$, it is clear that there is a  neighbourhood that only intersects one element $B_X$ of $\cal T$. Hence, we assume that $\|x\|\geq 1$ and we let $x'\coloneqq \frac{x}{\|x\|}$. As $X$ is (VI)-polyhedral and has ($\Delta$), by Lemma~\ref{lemma: DeltaQP iff locally finite faces} there are a neighbourhood $U$ of $x'$ and finitely many true faces $\Gamma_{f_1}, \dots, \Gamma_{f_k}$, where $f_1, \dots, f_k\in \B_0$, containing $x$ and with the following properties: the only true faces of $S_X$ that intersect $U$ are $\Gamma_{f_1}, \dots, \Gamma_{f_k}$, and 
\[ S_X\cap U\subseteq \Gamma_{f_1}\cup \dots\cup \Gamma_{f_k}. \]
Let $j\coloneqq \lfloor \|x\| \rfloor$ and consider the set
\[ W\coloneqq \left\{ ay\in X\colon y\in S_X\cap U, j-1< a< j+1 \right\}. \]
Since $x= \|x\|x'$, where $x'\in S_X\cap U$ and $j-1< \|x\|< j+1$, we see that $W$ is a neighbourhood of $x$. We shall show that $W$ intersects only finitely many elements of $\cal T$, which implies that $\cal T$ is locally finite.

Suppose that a tile $T_{n,f}\in \cal T$ intersects $W$ and take a point $ay\in T_{n,f}\cap W$. Since $ay\in T_{n,f}$, we have that $n\leq a\leq n+1$ and $y\in \Gamma_f$; likewise, $ay\in W$ implies that $j-1< a< j+1$ and $y\in S_X\cap U$. As only the true faces $\Gamma_{f_1}, \dots, \Gamma_{f_k}$ intersect $U$, we deduce that $f\in \{f_1, \dots, f_k\}$. Moreover, $n\in \{j-1,j\}$ (and $n=1$ if $j=1$). Thus, the only tiles in $\cal T$ that can intersect $W$ are
\[ \left\{ T_{n,f}\colon f\in \{f_1, \dots, f_k\},\ n\in \{j-1,j\} \right\}\cup \{B_X\}. \qedhere \]
\end{proof}

\begin{rem} In the proof in Step~\ref{step: tiling -> VI+Delta} that $\cal F$ is a covering we needed to use the fact that $\cal T$ is locally finite. In fact, if $\cal T$ is merely a tiling, it doesn't follow that $\cal F$ is a tiling as well. As a simple example in $\R$ consider
\[ \cal T= \{[n,n+1]\colon n\in \Z, n\neq 0\} \cup \left\{\left[ \frac{1}{n+1}, \frac{1}{n} \right]\colon n\in \N \right\}. \]
Then, 
\[ \cal F= \{ \pm [n,n+1]\colon n\in\N \} \cup \left\{ \pm\left[ \frac{1}{n+1}, \frac{1}{n} \right]\colon n\in \N \right\}, \]
which is not a covering, as $0\notin \bigcup_{I\in \cal F} I$. 
\end{rem}

We now move to some applications of Theorem~\ref{thm: tiling iff VI+Delta}. In all of them, the locally finite tiling we obtain is actually formed by polytopes, by Corollary~\ref{cor: locally finite gives Delta QP}. The first one pertains to polyhedral \emph{Banach} spaces, and it generalises Fonf's theorem from \cite{fonftiling}*{Theorem 2}.

\begin{corollary}\label{cor: Banach tiling iff} A Banach space admits a locally finite tiling by bounded convex bodies if and only if it admits a \textup{(K)}-polyhedral renorming with property \textup{($\Delta$)}.
\end{corollary}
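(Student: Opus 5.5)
The plan is to deduce the corollary directly from Theorem~\ref{thm: tiling iff VI+Delta}. The only extra ingredient is that, for \emph{complete} spaces, (K)-polyhedrality with ($\Delta$) already forces (VI)-polyhedrality with ($\Delta$). Since (VI) implies (K) trivially (Remark~\ref{rmk: poly def}), the two conditions then coincide, and the corollary is just condition \ref{i: VI + Delta norm} of that theorem restated.

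For the forward implication, suppose the Banach space $X$ has a locally finite tiling by bounded convex bodies. Theorem~\ref{thm: tiling iff VI+Delta} gives an equivalent norm that is (VI)-polyhedral with ($\Delta$), and such a norm is in particular (K)-polyhedral with ($\Delta$).

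For the converse, let $\n$ be an equivalent (K)-polyhedral norm on $X$ with ($\Delta$). Since $(X,\n)$ is a polyhedral Banach space, Fonf's structural theorem (or Theorem~\ref{thm: structural in normed space}) shows that $S_X$ is covered by true faces. The final clause of Proposition~\ref{prop: poly VS smooth} then yields that, for every $x\in S_X$, all of \ref{item: singleton}--\ref{item: true face} are equivalent. I need to upgrade this to (VI). By Lemma~\ref{lemma: Delta-QP pts iff VI+Delta}, it is enough to show that every $x\in S_X$ is a $\Delta$-QP point. Equivalently, by Lemma~\ref{lemma: DeltaQP iff locally finite faces}, near $x$ the sphere must be covered by finitely many true faces that contain $x$.

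Fix $x\in S_X$. By ($\Delta$), $\ext\D(x)=\{f_1,\dots,f_n\}$ is finite, and the minimal boundary $\B_0$ of true-face functionals is contained in $\ext B_{X^*}$, so $\B_0\cap\D(x)\subseteq\{f_1,\dots,f_n\}$. I would then show that there is a neighbourhood $U$ of $x$ such that every $y\in S_X\cap U$ satisfies $\D(y)\cap\B_0\subseteq\D(x)$. Combined with the covering by true faces, this gives $S_X\cap U\subseteq\bigcup_{f_k\in\B_0}\Gamma_{f_k}$, which is the finite covering required.

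Proving this neighbourhood claim is the main obstacle, and it is where completeness enters. My first attempt is the known Banach-space equivalence between (IV) and (V)+($\Delta$) (Fact~\ref{fact: IV iff D + V}), together with the classical result that a polyhedral Banach space with ($\Delta$) satisfies (V). The latter follows from the structural theorem by arguing as follows. If $g_j\in\B_0\setminus\D(x)$ had $g_j(x)\to1$, I would pass to a $w^*$-cluster point $g\in\D(x)$. Because $\B_0=\wstr(B_{X^*})$ and the sets $\D(x)\cap\B_0$ are finite, I would then extract from the true faces $\Gamma_{g_j}$ a $2$-dimensional section through $x$ whose unit ball is not a polygon, contradicting polyhedrality. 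This step is essentially Fonf's argument and may instead simply be quoted from the Banach-space literature. Given (V), Remark~\ref{rmk: poly def} yields (VI), and Theorem~\ref{thm: tiling iff VI+Delta} produces the tiling.
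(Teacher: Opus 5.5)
Your reduction matches the paper's: the corollary is Theorem~\ref{thm: tiling iff VI+Delta} plus the fact that, for \emph{Banach} spaces with ($\Delta$), (K)-polyhedrality already yields (VI)-polyhedrality. The paper simply quotes this from \cite{InfPoly}*{Theorem~1.4}. The gap is in how you propose to obtain (VI). You route it through the claim that a polyhedral Banach space with ($\Delta$) is (V)-polyhedral, hence (IV)-polyhedral by Fact~\ref{fact: IV iff D + V}. That claim is false, so it can neither be proved by your sketch nor quoted.

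Here is a counterexample. Take $\varepsilon_n\downarrow 0$ in $(0,1)$ and $h_n\coloneqq (e_1^*+\varepsilon_n e_n^*)/(1+\varepsilon_n/2)$ for $n\geq 2$. On $c_0$ put $\|y\|\coloneqq\sup_{f\in\B}f(y)$ with $\B\coloneqq\{\pm e_n^*\}_{n\geq 1}\cup\{\pm h_n\}_{n\geq 2}$. This is the construction of Proposition~\ref{prop: c_00 renorming}, but with $a_n=\varepsilon_n/2$, $b_n=\varepsilon_n$, and on $c_0$.

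\emph{It is a (VI)-polyhedral Banach space with ($\Delta$).} If $|h_n(y)|\geq|y_1|$, then $|y_n|\geq|y_1|/2$, and hence $|h_n(y)|\leq 3|y_n|$. For $y$ near a fixed $x\in S_X$, the coordinates $y_n$ with $n$ large are uniformly small while $\|y\|\approx 1$. So only finitely many elements of $\B$ are active near $x$, every point of $S_X$ is a $\Delta$-QP point, and Fact~\ref{fact: DeltaQP iff} gives (VI) with ($\Delta$).

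\emph{Property (V) fails at $e_1$.} We have $\|e_1+e_n\|=h_n(e_1+e_n)>1\geq f(e_1+e_n)$ for every other $f\in\overline{\B}^{w^*}=\B\cup\{0\}$. By Milman's theorem this gives $\D(e_1+e_n)=\{h_n\}$, so $h_n$ is $w^*$-exposed and hence extreme. On the other hand $h_n(e_1)=1/(1+\varepsilon_n/2)\to 1$ with $h_n\notin\D(e_1)$, so (V) fails at $e_1$. Also $h_n\to e_1^*$ in norm, so (IV) fails too.

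Your sketch cannot produce a non-polygonal section here. Every $y\in\Gamma_{h_n}$ satisfies $y_1>1/2$ and $y_n\geq 1/4$, so these faces stay far from $e_1$ even though $h_n(e_1)\to 1$.

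What you actually need is your neighbourhood claim: $\D(y)\cap\B_0\subseteq\D(x)$ for $y\in S_X$ near $x$. Via Lemma~\ref{lemma: boundary VS extreme points} and Lemma~\ref{lemma: VI iff D(y)}, this is exactly (VI) at $x$. It is a local condition on the faces that come close to $x$, not on functionals whose value at $x$ is close to $1$, and the example shows it is strictly weaker than (V).

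The claim is true in Banach spaces with ($\Delta$), but any proof must use completeness in a local way, since Example~\ref{ex: (K) + (Delta) not (VI)} shows it fails in $c_{00}$. The paper does not reprove it; it cites \cite{InfPoly}*{Theorem~1.4}. Replace your detour through (V) by that citation, or by a correct local argument, and the proof goes through. The rest of your plan is fine: the forward direction, and the appeal to Theorem~\ref{thm: tiling iff VI+Delta} once (VI) is available.
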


This result is a particular case of Theorem~\ref{thm: tiling iff VI+Delta}, because, for a Banach space with ($\Delta$), (K)-polyhedrality is equivalent to (VI)-polyhedrality, \cite{InfPoly}*{Theorem 1.4}. However, it generalises Fonf's result for separable Banach spaces, since every separable polyhedral Banach space has a (IV)-polyhedral renorming. Moreover, it clarifies what is the exact polyhedrality required in order to obtain the result in the non-separable case.

\begin{corollary}\label{cor: ctble bdry -> tiling} Every \textup{(IV)}-polyhedral normed space admits a locally finite tiling by bounded convex bodies. Moreover, every normed space with a countable boundary admits a countable locally finite tiling by bounded convex bodies. In particular, this is the case for all normed spaces with countable dimension.
\end{corollary}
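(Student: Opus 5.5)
The plan is to deduce all three clauses from Theorem~\ref{thm: tiling iff VI+Delta}, together with Fact~\ref{fact: IV iff D + V} and Fact~\ref{fact: ctble boundary and IV}.

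For the first clause, let $X$ be (IV)-polyhedral. By Fact~\ref{fact: IV iff D + V}, $X$ is (V)-polyhedral and has ($\Delta$). By Remark~\ref{rmk: poly def}, (V) implies (VI), so $X$ is (VI)-polyhedral with ($\Delta$). The implication \ref{i: VI + Delta norm}$\implies$\ref{i: loc finite tiling} of Theorem~\ref{thm: tiling iff VI+Delta} then gives a locally finite tiling by bounded convex bodies.

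For the second clause, let $X$ have a norm with a countable boundary. Fact~\ref{fact: ctble boundary and IV} (the appendix fact already used in Corollary~\ref{cor: C^infty renorm}) gives an equivalent (IV)-polyhedral norm $\nn$ on $X$. The first clause then yields a locally finite tiling. To see that it can be taken countable, I would use the last part of Theorem~\ref{thm: tiling iff VI+Delta}: if $X$ is infinite dimensional, the tiling can be chosen with cardinality equal to that of the minimal boundary $\B_0$ of $(X,\nn)$. Since $\B_0$ is contained in every boundary (Fact~\ref{fact: bdry}), it suffices to know that $(X,\nn)$ still has a countable boundary. I expect the construction in Fact~\ref{fact: ctble boundary and IV} to provide this: the new norm is built as a supremum over a countable family of functionals, so that family is a countable $1$-norming set, and (IV)-polyhedrality forces the supremum to be attained, making it a boundary. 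Checking this detail of the appendix fact is the \textbf{main obstacle}. If it fails, I would argue directly in $(X,\nn)$: $\B_0$ is the set of functionals inducing true faces, these faces have pairwise disjoint nonempty relatively open interiors in $S_X$, and one needs to show there are only countably many of them. This is immediate when $X$ is separable, and I would try to reduce the general case to that one.

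If $X$ is finite dimensional, I would not use the cardinality clause. The tiling from Step~\ref{step: spiderweb}, $\{T_{n,f}\}\cup\{B_X\}$, is indexed by $\N\times\B_0$, and $\B_0$ is finite because $B_X$ is a polytope. So the tiling is countable anyway.

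For the third clause, let $X$ have countable dimension. By Markushevich's result, $X=\spann\{e_n\}_{n=1}^\infty$ for some biorthogonal system $\{e_n;e^*_n\}_{n=1}^\infty$. The space is then separable, with dense countable set $D$ of rational combinations of the $e_n$. I would use the standard countable norming construction: by Hahn--Banach, for each $d\in D$ pick $f_d\in S_{X^*}$ with $f_d(d)=\|d\|$. This gives a countable $1$-norming set, but not necessarily a boundary. To obtain one, I would renorm via the biorthogonal system, for instance by taking a supremum over finitely supported pieces $\sup_n\|\cdot\|_{\spann\{e_1,\dots,e_n\}}$-type expressions built from $e^*_n$ combined with $\n$. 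Alternatively, I would invoke the earlier claim that $\nn$ attains its supremum on elements with finite support because each $x$ lies in a finite-dimensional span. Once a norm with countable boundary exists, the second clause finishes the proof.
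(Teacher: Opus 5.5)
Your first two clauses follow the paper's argument. (IV) gives (V)+($\Delta$), hence (VI)+($\Delta$), and Theorem~\ref{thm: tiling iff VI+Delta} applies. For the countable version, Fact~\ref{fact: ctble boundary and IV} gives a (IV)-polyhedral renorming, and the cardinality clause of Theorem~\ref{thm: tiling iff VI+Delta}, together with the fact that $\B_0$ lies in every boundary, gives countability. Your separate treatment of the finite-dimensional case is a welcome extra.

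What you call the main obstacle is not one. The statement of Fact~\ref{fact: ctble boundary and IV} already asserts that the new norm has a countable boundary, and its proof shows directly that $\{\pm g_n\}$ is a boundary, thanks to the weights $1+\e_n$. Your heuristic that (IV)-polyhedrality forces a countable $1$-norming family to attain its supremum is false in general. In $\ell_\infty^2$ the family $\{\pm(1-1/k)e_i^*\colon i=1,2,\ k\geq 2\}$ is $1$-norming, yet it attains the norm at no point of the sphere.

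The genuine gap is in the third clause: you never produce an equivalent norm with a countable boundary. The proposed ``$\sup_n\|\cdot\|_{\spann\{e_1,\dots,e_n\}}$-type expressions'' is not a definition. There is also no earlier claim that the supremum is attained ``because each $x$ lies in a finite-dimensional span'', and that claim is false. If $\|\cdot\|$ is, say, Euclidean on some $2$-dimensional subspace $Y$, then every boundary must restrict to uncountably many distinct supporting functionals of $Y$. So no construction that keeps the original norm on the finite-dimensional pieces $X_n$ can give a countable boundary.

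The missing idea is that you must first replace the norm by one that is piecewise linear on each $X_n$. The paper does this through Lemma~\ref{lemma: ctble dim => IV}. By Deville--Fonf--H\'ajek, $X$ has an equivalent polyhedral norm. Each $X_n$ then has a finite boundary, and the Hahn--Banach extensions of these finite boundaries form a countable boundary for $X$. Fact~\ref{fact: ctble boundary and IV}, or your second clause, then finishes the proof.

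An elementary fix along the lines you were attempting also works:
\begin{itemize}
\item By compactness of $S_{X_n}$, choose finite symmetric sets $F_n\subseteq S_{X^*}$ with $\max_{f\in F_n}f(x)\geq(1-\e_n)\|x\|$ for all $x\in X_n$.
\item Set $\vertt{x}\coloneqq\sup_n\max_{f\in F_n}(1+\delta_n)f(x)$, where $\delta_n\searrow0$ and $(1+\delta_n)(1-\e_n)>1$.
\item For $0\neq x\in X_n$, the $n$-th term exceeds $\|x\|$, while the $m$-th term is at most $(1+\delta_m)\|x\|$.
\item Hence only finitely many finite sets compete, so the supremum is attained, and $\vertt{\cdot}$ is equivalent with countable boundary $\{(1+\delta_m)f\colon f\in F_m,\ m\in\N\}$.
\end{itemize}
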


\begin{proof} The first clause is just a particular case of Theorem~\ref{thm: tiling iff VI+Delta}, as (IV)-polyhedral normed spaces have ($\Delta$), \emph{cf}. Fact~\ref{fact: IV iff D + V}. For the second part, if $X$ has a countable boundary, then it admits a (IV)-polyhedral equivalent norm that has a countable boundary (see Fact~\ref{fact: ctble boundary and IV} for details). Finally, if $X$ has countable dimension, it admits a (IV)-polyhedral norm with a countable boundary, by Lemma~\ref{lemma: ctble dim => IV}.
\end{proof}

The main drawback of Corollary~\ref{cor: ctble bdry -> tiling} is that it doesn't directly apply to all separable polyhedral normed spaces. In fact, while separable polyhedral Banach spaces have a countable boundary, there are separable polyhedral normed spaces that don't admit a countable boundary (Example~\ref{ex: no ctble bdry}). The last corollary we offer yields us a large amount of normed spaces that admit a locally finite tiling.

\begin{corollary}\label{cor: b.s. gives tiling} Let $\{e_\alpha; e^*_\alpha\}_{\alpha\in \Gamma}$ be a biorthogonal system in a normed space $X$. Then, the normed space $Y\coloneqq \spann \{e_\alpha\}_{\alpha\in \Gamma}$ admits a locally finite tiling by bounded convex bodies.
\end{corollary}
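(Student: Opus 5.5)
By Theorem~\ref{thm: tiling iff VI+Delta}, it suffices to equip $Y$ with an equivalent norm that is (VI)-polyhedral and has ($\Delta$). The most convenient route is through Lemma~\ref{lemma: Delta-QP pts iff VI+Delta}: we will produce a norm on $Y$ that is (K)-polyhedral and LFC, or directly one in which every point of the sphere is a $\Delta$-QP point. The natural candidate is a $c_0$-type norm built from the coordinate functionals. Since each $e^*_\alpha$ is continuous, we can first normalise the system so that $\|e_\alpha\|=1$. Then $\|e^*_\alpha\|\leq C_\alpha$, but the $C_\alpha$ need not be uniformly bounded. We therefore replace $e^*_\alpha$ by $f_\alpha\coloneqq e^*_\alpha/\|e^*_\alpha\|$, or more precisely by suitably scaled functionals in $B_{Y^*}$, and set
\[ \vertt{y}\coloneqq \max\Bigl\{\tfrac12\|y\|_{\text{aux}},\ \sup_{\alpha} |f_\alpha(y)|\Bigr\}. \]
Here the auxiliary term must itself be polyhedral-friendly, so instead we use the following.

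First step. Since every $y\in Y$ has finite support, we define $\vertt{y}\coloneqq \max_{\alpha}|e^*_\alpha(y)|\,\|e_\alpha\|$, and compare it with $\n$. The bound $\vertt{y}\leq \|e^*_\alpha\|\|e_\alpha\|\|y\|$ fails to be uniform in general, so equivalence to $\n$ is not automatic. The fix is to note that Theorem~\ref{thm: tiling iff VI+Delta} only needs a norm generating the same topology. Hence we will instead take
\[ \vertt{y}\coloneqq \max\bigl\{\|y\|_{\mathrm{poly}},\ \sup_\alpha |e^*_\alpha(y)|/\|e^*_\alpha\|\bigr\}, \]
and we must choose a dominating term. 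The honest plan is to combine the original norm with these functionals via a Fonf/Hájek-type construction. We take $\vertt{y}\coloneqq \sup_\alpha \bigl(1+\varepsilon_\alpha\bigr)|g_\alpha(y)|$ over a 1-norming family $\{g_\alpha\}$ adapted to the system. Alternatively, and more simply, we use the fact (from the Appendix, Lemma~\ref{lemma: ctble dim => IV}, in its countable form) that spans of biorthogonal systems carry (IV)-polyhedral norms. We then mimic its proof for arbitrary $\Gamma$: on $Y$ consider the set $\B\coloneqq \{\pm(1+\delta_\alpha) e^*_\alpha/\|e^*_\alpha\|\}$ together with a norming set for $\n$ that is pushed strictly inside, and define $\vertt{\cdot}$ as the sup over this set.

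Second step. We verify that each $y\in S_{(Y,\nn)}$ is a $\Delta$-QP point. The key point is finite support: for $y$ with support $F$, only the functionals $e^*_\alpha$ with $\alpha\in F$ are nonzero at $y$. The supremum is then attained by finitely many functionals, and all the others stay bounded away from $\vertt{y}$ in a neighbourhood, giving ($\Delta$) and the local description \eqref{eq: Delta-QP}. Local finiteness of the sup near $y$ needs an estimate $|e^*_\alpha(z)|$ small for $\alpha\notin F$ when $z$ is near $y$. This is exactly where non-uniform continuity of the $e^*_\alpha$ is problematic, and it is the main obstacle. I would handle it by weighting each $e^*_\alpha$ so that the weighted functional has norm $1$, ensuring these functionals are uniformly equicontinuous, and by making the $\n$-part of the norm strictly smaller, e.g.\ $\frac12\|\cdot\|$. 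Then near $y$ the norm $\vertt{\cdot}$ coincides with $\max_{\alpha\in F'}|\tilde e^*_\alpha|$ for a finite set $F'$. Finally, Lemma~\ref{lemma: Delta-QP pts iff VI+Delta} together with Theorem~\ref{thm: tiling iff VI+Delta} concludes the proof.
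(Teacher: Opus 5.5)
Your overall reduction matches the paper's: renorm $Y$ so that the new norm is (K)-polyhedral and LFC (equivalently, (VI)-polyhedral with ($\Delta$)), then apply Theorem~\ref{thm: tiling iff VI+Delta}. The gap is in the construction of that norm.

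The norm you settle on is $\vertt{y}=\max\bigl\{\tfrac12\|y\|,\ \sup_\alpha|\tilde e^*_\alpha(y)|\bigr\}$, with $\|\tilde e^*_\alpha\|\le 1$ (or $\le 1+\delta_\alpha$). It is indeed equivalent to $\n$. Your local argument for the coordinate part is also fine: for $\alpha\notin\supp(y)$ one has $|\tilde e^*_\alpha(z)|\le\|z-y\|$.

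The problem is that the coordinate functionals are in general not norming for $Y$, not even up to a constant. So on a large part of the sphere the active term is $\tfrac12\n$. There the new norm is locally a multiple of the original one, which need not be polyhedral at all.

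Concretely, take $X=\ell_2$ with its unit vector basis, so $Y=c_{00}$ with the Euclidean norm and $\sup_n|e^*_n(y)|=\|y\|_\infty$. For $y=e_1+\dots+e_k$ with $k$ large, $\tfrac12\|y\|_2=\tfrac{\sqrt k}{2}$ strictly exceeds $\sup_n(1+\delta_n)|e^*_n(y)|$. Hence $\vertt{\cdot}=\tfrac12\n_2$ on a neighbourhood of $y$, and every $2$-dimensional section of $B_{(Y,\vertt{\cdot})}$ through $y/\vertt{y}$ has a circular arc in its boundary. The new norm is therefore not even (K)-polyhedral.

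So your claim that ``near $y$ the norm coincides with $\max_{\alpha\in F'}|\tilde e^*_\alpha|$'' is false. No choice of the constant $\tfrac12$ or of the weights $1+\delta_\alpha$ repairs it, because $\sup_\alpha|\tilde e^*_\alpha(\cdot)|\ge c\n$ fails for every $c>0$ in such examples. Replacing $\n$ by ``a norming set pushed strictly inside'' does not help either, unless that set already has the local finiteness you are trying to produce.

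The missing ingredient is the nontrivial renorming theorem the paper cites. By \cite{dantashajekrusso}*{Theorem~A}, the linear span of any biorthogonal system admits an equivalent norm that is (K)-polyhedral and LFC. Combined with Theorem~\ref{thm: tiling iff VI+Delta}, this gives the corollary in two lines.

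Your fallback of ``mimicking Lemma~\ref{lemma: ctble dim => IV} for arbitrary $\Gamma$'' does not supply this. That lemma concerns spaces of countable dimension, not spans of biorthogonal systems. It, together with Corollary~\ref{cor: ctble bdry -> tiling}, does settle the case of countable $\Gamma$. However, its proof rests on two things unavailable when $\Gamma$ is uncountable:
\begin{enumerate}
\item the Deville--Fonf--H\'ajek approximation of convex bodies by polytopes, which holds in spaces of countable dimension;
\item an exhaustion of the space by an increasing sequence $X_n$ of finite-dimensional subspaces.
\end{enumerate}
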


\begin{proof} It is proved in \cite{dantashajekrusso}*{Theorem A} that the linear span of every biorthogonal system admits a (K)-polyhedral and LFC norm. (Incidentally, one can even prove that $Y$ admits a (IV)-polyhedral norm, \cite{DDRS}.) Thus, Theorem~\ref{thm: tiling iff VI+Delta} gives the result.
\end{proof}

We now give an example where the cardinality of the locally finite tiling is less than the density character of the space. Notice that examples of non-separable normed spaces that admit a countable tiling were already known (\cite{FPZ_BLMS}*{Proposition~1.6}), but the tiling in our example has the additional feature of being locally finite. Note that such an example is not possible in a Banach space (Fact~\ref{fact: card of tiling in Banach}).

\begin{example}\label{ex: ell_Infty^F} Consider the normed space $\ell_\infty^F$ of all sequences that attain finitely many values (the notation $\ell_\infty^F$ is as in \cite{DHR_JMAA}), namely
\[ \ell_\infty^F\coloneqq \{ (x_n)_{n=1}^\infty\in \ell_\infty \colon |x(\N)|<\omega\}. \]
Plainly, this space has a countable boundary, given by $\B_0= \{\pm f_n\}_{n=1}^\infty$, where $f_n\in \ell_\infty^*$ is defined by $f_n(x)= x_n$. Thus, $\ell_\infty^F$ is a non-separable normed space and yet it admits a countable locally finite tiling by Corollary~\ref{cor: ctble bdry -> tiling}.

Incidentally, notice that each functional $\pm f_n$ defines a true face, as $\pm e_n$ belongs to $\inte_{H_n} (B_{\ell_\infty^F}\cap H_n)$, where $H_n= f_n^{-1}(1)$. In fact, if $y\in H_n$ and $\|e_n- y\|<1$, then $y\in B_{\ell_\infty^F}$. As a consequence, $\B_0$ is actually the minimal boundary and the sphere of $\ell_\infty^F$ is covered by countably many true faces. Further, as $\B_0$ is countable, it is clear that $\cconv(\B_0)\neq B_{\ell_\infty^*}$, thus the formula $\cconv(\B_0)= B_{X^*}$ from the structural theorem for polyhedral Banach spaces also doesn't extend to normed spaces.

Finally, we also observe that $\ell_\infty^F$ is (V)-polyhedral and it doesn't have property ($\Delta$). In fact, it is immediate to verify that the boundary $\B_0$ satisfies (V), thus $\ell_\infty^F$ is (V)-polyhedral by Lemma~\ref{lemma: V poly for bdry}. On the other hand, if $x\in \ell_\infty^F$ attains its maximum at infinitely many values, \emph{e.g.}, $x_n=1$ for all $n\in \N$, then $\D(x)$ contains infinitely many elements of $\B_0$. As $\B_0$ is the minimal boundary, all its elements are extreme points of $B_{\ell_\infty^*}$. Thus, $\ext \D(x)$ is infinite and $\ell_\infty^F$ does not have ($\Delta$).
\end{example}

\begin{fact}\label{fact: card of tiling in Banach} Every locally finite tiling of a Banach space $X$ by bounded convex bodies has cardinality $\textup{dens}(X)$.
\end{fact}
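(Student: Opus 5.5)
The plan is to prove the two inequalities $|\cal T|\leq \textup{dens}(X)$ and $\textup{dens}(X)\leq |\cal T|$ separately. The second one goes through the symmetrisation from Step~\ref{step: tiling -> VI+Delta} of Theorem~\ref{thm: tiling iff VI+Delta} and Fonf's structural theorem for Banach spaces. If $X$ is finite dimensional, a locally finite tiling of $X$ by bounded bodies is countably infinite, which agrees with $\textup{dens}(X)$ in the usual convention. So I would assume $X$ is infinite dimensional, and hence $\cal T$ is infinite.

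\emph{Upper bound.} Fix a dense set $D\subseteq X$ with $|D|=\textup{dens}(X)$. Every $C\in\cal T$ has non-empty interior, so I choose $d_C\in D\cap \inte C$. Interiors of distinct tiles are disjoint, so $C\mapsto d_C$ is injective and $|\cal T|\leq \textup{dens}(X)$.

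\emph{Lower bound.} Here I run Step~\ref{step: tiling -> VI+Delta} of the proof of Theorem~\ref{thm: tiling iff VI+Delta}. After a translation, this gives the locally finite tiling $\cal F$, whose cardinality satisfies $|\cal F|\leq|\cal T|^2=|\cal T|$. It also gives a tile $B\in\cal F$ that is the unit ball of an equivalent norm $\nn$, and $(X,\nn)$ is (VI)-polyhedral with ($\Delta$). Since $X$ is complete, Fonf's structural theorem (or Theorem~\ref{thm: structural in normed space}) applies. It says that $S_{(X,\nn)}$ is covered by true faces, and that the minimal boundary $\B_0$ satisfies $|\B_0|=\textup{dens}(X)$. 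It therefore suffices to build an injection from the set of true faces of $B$ into $\cal F\setminus\{B\}$.

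Let $\Gamma=B\cap f^{-1}(1)$ be a true face and pick $x\in\inte_H\Gamma$, so that $\D(x)=\{f\}$ by Proposition~\ref{prop: poly VS smooth}. By Remark~\ref{rmk: loc finite tiling}\ref{i: protective}, some ball $B(x,r)$ is covered by finitely many tiles of $\cal F$, and each of them contains $x$. The non-empty open set $B(x,r)\setminus B$ is covered by the finitely many such tiles other than $B$. By Baire in this open set (only finitely many closed sets are involved), one of them, say $D_\Gamma$, has interior meeting it, and $x\in D_\Gamma$.

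Next I show that $D_\Gamma$ determines $\Gamma$. The sets $D_\Gamma$ and $B$ are convex with disjoint interiors, so a hyperplane $H_D$ separates them, and $D_\Gamma\cap B\subseteq H_D\cap B$, which is a face of $B$. Since $x\in D_\Gamma\cap B$, the hyperplane $H_D$ supports $B$ at $x$, and $\D(x)=\{f\}$ forces $H_D=f^{-1}(1)$. Hence $D_\Gamma\cap B\subseteq\Gamma$, and $D_\Gamma\cap B$ contains a relative interior point of $\Gamma$. Now suppose $D_\Gamma=D_{\Gamma'}$. Then $D_\Gamma\cap B$ lies in $\Gamma\cap\Gamma'$ and meets $\inte_H\Gamma$. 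Remark~\ref{rem: true faces}\ref{item: distinct faces} then gives $\Gamma=\Gamma'$, so the map is injective. Therefore $\textup{dens}(X)=|\B_0|\leq|\cal F|\leq|\cal T|$.

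The step I expect to need the most care is this injectivity argument. Specifically, one must check that the separating hyperplane for $D_\Gamma$ and $B$ is forced to be the one defining $\Gamma$. This is exactly where smoothness at relative interior points of true faces (Proposition~\ref{prop: poly VS smooth}) is used. Completeness enters only through Fonf's equality $|\B_0|=\textup{dens}(X)$, which fails for normed spaces, as Example~\ref{ex: ell_Infty^F} shows.
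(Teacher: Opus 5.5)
Your proof is correct. It shares the paper's skeleton: the trivial upper bound via disjoint interiors, the symmetrisation of Step~\ref{step: tiling -> VI+Delta} that makes a tile the unit ball of a polyhedral norm, and Fonf's equality $|\B_0|=\textup{dens}(X)$. The lower-bound bookkeeping, however, runs in the opposite direction.

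The paper maps tiles to functionals. For each tile $C\neq B_X$ it picks a separating functional $f_C$, and Remark~\ref{rmk: loc finite tiling}\ref{i: regular} shows that $\{f_C\}$ is a boundary. Since every boundary contains the minimal boundary, $\textup{dens}(X)=|\B_0|\leq|\{f_C\}|\leq\kappa$, and no injectivity needs checking.

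You instead map true faces to tiles, so you must prove injectivity. That is where you need $\D(x)=\{f\}$ at relative interior points (Proposition~\ref{prop: poly VS smooth}) and Remark~\ref{rem: true faces}\ref{item: distinct faces}. In effect you re-prove by hand the fact that $\wexp(B_{X^*})$ lies in every boundary, which the paper simply invokes. In return you get an explicit injection of the true faces of $B$ into neighbouring tiles. You also treat the finite-dimensional case, which the paper's argument silently excludes: there $\B_0$ is finite while $\textup{dens}(X)=\aleph_0$.

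Two small simplifications are available. First, the Baire step is unnecessary: Remark~\ref{rmk: loc finite tiling}\ref{i: regular} directly gives a tile $D_\Gamma\neq B$ containing $x$, and injectivity only uses $x\in D_\Gamma\cap B$. Second, with the paper's convention $B(x,r)$ is the closed ball, so use the open ball if you want $B(x,r)\setminus B$ to be open.
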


\begin{proof} Let $\cal T$ be a locally finite tiling of a Banach space $X$, and let $\kappa\coloneqq |\cal T|$. Clearly, $\kappa\leq \textup{dens}(X)$. For the converse inequality, note that the set $\cal F$ in Step~\ref{step: tiling -> VI+Delta} of Theorem~\ref{thm: tiling iff VI+Delta} also has cardinality $\kappa$. Thus, up to replacing $\cal T$ with $\cal F$ and taking an equivalent norm, we assume that $B_X\in \cal T$. For each $C\in \cal T$, $C\neq B_X$, take a functional $f_C\in S_{X^*}$ that separates $C$ from $B_X$, and let $\B\coloneqq \{f_C\colon C\in\cal T,\ C\neq B_X\}$. Then, $|\B|\leq \kappa$. Further, $\B$ is a boundary for $X$. In fact, for each $x\in S_X$, Remark~\ref{rmk: loc finite tiling}\ref{i: regular} implies that there is $C\in \cal T$, $C\neq B_X$ such that $x\in C$. In turn, this gives that $f$ supports $B_X$ at $x$, hence $\B$ is a boundary. 

However, by Corollary~\ref{cor: locally finite gives Delta QP} and Lemma~\ref{lemma: Delta-QP pts iff VI+Delta}, $X$ is a polyhedral Banach space. Thus, it has a minimal boundary, whose cardinality equals $\textup{dens}(X)$, by \cite{fonfstruttnew}*{Theorem 1.4} (see also \cite{veselystrutt}*{Theorem 2}). Hence, $\textup{dens}(X)\leq |\B|\leq \kappa$, and we are done.
\end{proof}

\section{Counterexamples}\label{sec: counterexamples}
In this section we give several counterexamples in different directions, with the aim of showing that the versions of the structural theorem that we proved in Section~\ref{sec: structural thm} are optimal. In particular, we give examples of polyhedral normed spaces that don't admit any algebraic true face (Section~\ref{sec: ell1,0} and Section~\ref{sec: countable dim}), examples where the algebraic true faces cover the sphere, but the true faces do not cover it (Section~\ref{sec: K+Delta no ST}), and one example of a polyhedral normed space $X$ such that the extreme points of $B_X$ are dense in $S_X$ (Section~\ref{sec: str exp dense}). We also give some examples involving boundaries (Section~\ref{sec: boundary}).

\subsection{A basic example, the space \texorpdfstring{$\ell_{1,0}$}{ell1,0}}\label{sec: ell1,0}
We begin with the simplest instance of a polyhedral normed space that doesn't satisfy the conclusion of the structural theorem, namely the space $\ell_{1,0} \coloneqq (c_{00},\n_1)$. The fact that this space is (VI)-polyhedral has already been noted, \emph{e.g.}, in \cite{DP-Modena}*{Example~1, p.~642}. As it turns out, $\ell_{1,0}$ is (V)-polyhedral and its unit ball doesn't even have any algebraic true face.

\begin{example}\label{ex: (V) no alg true faces} Consider the normed space $\ell_{1,0} \coloneqq (c_{00},\n_1)$. To begin with, notice that $\ell_{1,0}$ is (K)-polyhedral: in fact, each finite-dimensional subspace of $\ell_{1,0}$ is contained in $\ell_1^n$ for some $n\in \N$, and $\ell_1^n$ is clearly polyhedral. On the other hand, a standard computation (see, \emph{e.g.}, \cite{FHHMZ}*{Exercise 7.37}) shows that the $\ell_1$-norm is G\^{a}teaux differentiable at a point $x\in S_{\ell_1}$ if and only if all coordinates of $x$ are non-zero. Hence, the norm of $\ell_{1,0}$ is nowhere G\^{a}teaux differentiable, and Proposition~\ref{prop: poly VS smooth} yields that $\ell_{1,0}$ admits no algebraic true face. In particular, by Theorem~\ref{thm: structural in normed space}, the space does not have ($\Delta$), which is also easy to check directly.

As it turns out, $\ell_{1,0}$ is even (V)-polyhedral. In fact, fix $x\in S_{\ell_{1,0}}$ and consider the finite set $\supp(x)\coloneqq \{j\in \N\colon x_j\neq 0\}$. Then,
\[ \D(x)= \{f\in \ell_\infty\colon f_j= \sgn(x_j),\, \text{for all}\, j\in \supp(x) \}. \]
Since the extreme points of the dual ball $B_{\ell_ \infty}$ are the sequences with values in $\{-1,1\}$, if $f\in \ext B_{\ell_\infty}\setminus \D(x)$, there must be $k\in \supp(x)$ such that $f_k= -\sgn (x_k)$. Therefore, setting $\e\coloneqq \min\{|x_j|\colon j\in \supp(x)\}>0$, we have that 
\[ f(x)\leq 1- 2|x_k|\leq 1-2\e. \]
Hence, for each $x\in S_{\ell_{1,0}}$, we obtain
\[ \sup \{f(x)\colon f\in \ext B_{\ell_\infty} \setminus \D(x)\}<1, \]
which shows that $\ell_{1,0}$ is (V)-polyhedral.

The last property we wish to observe in this example is the equality
\[ \conv (\ext B_{\ell_{1,0}})= B_{\ell_{1,0}} \]
(notice that we are only taking the convex hull and not its closure), which immediately follows from the fact that $\ext B_{\ell_{1,0}}= \{\pm e_n\}_{n\in \N}$.
\end{example}

The takeaway of this example is that if one weakens the assumption of (IV)-polyhedrality in Corollary~\ref{cor: str thm for IV-poly} to (V)-polyhedrality, not only the conclusion ceases to hold, but it is even possible that there is no algebraic true face at all. In Section~\ref{sec: countable dim} we will obtain the same phenomenon in a suitable renorming of every normed space of countable dimension.
\smallskip

By modifying the previous example, we can also obtain another one where, instead, there are no extreme points at all.

\begin{example} Consider the normed space $X\coloneqq \ell_{1,0}\oplus_\infty c_0$ (one could also consider $\ell_{1,0}\oplus_\infty c_{00}$). Since the unit ball of $c_0$ doesn't have extreme points, one quickly deduces that $\ext B_X= \emptyset$. Moreover, $S_X$ is not covered by the algebraic true faces. In fact, take any point $(x,y)\in S_X$ such that $\|y\|< 1$. Then, any functional $(f,g)\in \D_X((x,y))$ must have the form $(f,0)$, where $f\in \D_{\ell_{1,0}}(x)$. Hence,
\[ (f,0)^{-1}(1)\cap B_X= \left(f^{-1}(1)\cap B_{\ell_{1,0}}\right) \times B_{c_0}. \]
By the previous example, $f^{-1}(1)\cap B_{\ell_{1,0}}$ has empty algebraic interior in $S_{\ell_{1,0}}$, thus $(x,y)$ does not belong to any algebraic true face.

Incidentally, let us notice that points of the form $(x,y)\in S_X$ with $\|y\|=1$ belong to a true face. In fact, find $k\in \N$ such that $|y_k|=1$; for simplicity, assume that $y_k=1$. Then, $(x,y)\in (0,e^*_k)^{-1}(1)$ and the face 
\[ (0,e^*_k)^{-1}(1)\cap B_X \]
has non-empty interior in $S_X$; for instance, $(0,e_k)$ belongs to the interior.
\end{example}

The rationale behind this example is that, for a (K)-polyhedral normed space $X$, property ($\Delta$) implies both the validity of the structural theorem and the fact that $\ext B_X= \emptyset$ (the latter fact is because of Lemma~\ref{lemma: few extreme points}). One might thus wonder if the condition $\ext B_X= \emptyset$ could already imply the validity of the structural theorem; this example shows that it is not the case. In \cite{DDRS} we will even give an example of a polyhedral normed space $X$ such that $\ext B_X= \emptyset$ and $S_X$ doesn't admit any algebraic true face.

\subsection{(K)-polyhedral spaces with property (\texorpdfstring{$\Delta$}{Δ})}\label{sec: K+Delta no ST}
The goal of this section is to give examples showing that (K)-polyhedrality together with property ($\Delta$) is not sufficient to obtain the validity of the structural theorem. In particular, it is possible that the unit sphere of a polyhedral space is covered by algebraic true faces, but not by true faces. Along the way, we also show that the implications \ref{item: G-smooth}$\implies$\ref{item: frechet}$\implies$\ref{item: true face} in Proposition~\ref{prop: poly VS smooth} do not hold in general.

The two examples we give are based on the same pattern and we isolate the general framework in the following proposition.

\begin{proposition}\label{prop: c_00 renorming} Given two sequences $(a_n)_{n=2}^{\infty}, (b_n)_{n=2}^{\infty}\subseteq (0,1]$ with $\inf_{n\geq 2}\frac{a_n}{b_n}= 0$, consider the functionals $h_n$ ($n\geq 2$) on $c_{00}$ given by
\[ h_n\coloneqq \frac{e_1^*+ b_ne_n^*}{1+a_n}, \]
where $(e_n^*)_{n=1}^{\infty}$ is the canonical Schauder basis of $\ell_1$. Consider the set
\[ \B \coloneqq \{\pm e_n^*\colon n \in \N \} \cup \{\pm h_n\colon  n \geq 2 \} \]
and the equivalent norm $\n$ on $c_{00}$ given by
\[ \|x\|\coloneqq \sup_{f\in \B}f(x). \]
Then, the space $X\coloneqq (c_{00},\n)$ satisfies the following: 
\begin{enumerate}
    \item\label{i: K+Delta} $X$ is \textup{(K)}-polyhedral and has property \textup{($\Delta$)};
    \item\label{i: B boundary} the set $\B$ is a boundary for $X$, thus $B_{X^*}=\cconv^{w^*}(\B)$;
    \item\label{i: e*1 not true face} $(e_1^*)^{-1}(1)\cap B_X$ is not a true face;
    \item\label{i: faces don't cover} $\D(e_1)= \{e^*_1\}$ and $e_1$ does not belong to any true face.
\end{enumerate}
\end{proposition}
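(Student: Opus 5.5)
The plan is to work throughout with the explicit formula
\[ \|x\|=\max\Bigl\{\|x\|_\infty,\ \sup_{n\geq 2}\frac{|x_1+b_nx_n|}{1+a_n}\Bigr\}, \]
which gives $\|x\|_\infty\leq\|x\|\leq 2\|x\|_\infty$, so $\n$ is an equivalent norm. The basic observation behind \ref{i: K+Delta} and \ref{i: B boundary} is the following. If $\supp(x)\subseteq\{1,\dots,N\}$, then for $n>N$ we have $h_n(x)=x_1/(1+a_n)$. Hence $|h_n(x)|<|x_1|=|e_1^*(x)|$ whenever $x_1\neq 0$, and $h_n(x)=0$ when $x_1=0$.

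\textbf{Step 1: \ref{i: B boundary}.} By the observation, on $\spann\{e_1,\dots,e_N\}$ the supremum defining $\n$ equals the maximum over the finite set $\{\pm e_1^*,\dots,\pm e_N^*,\pm h_2,\dots,\pm h_N\}$. So the supremum is attained at some element of $\B$, and $\B$ is a boundary. Since $\B$ is in particular $1$-norming, the Hahn--Banach theorem gives $B_{X^*}=\cconv^{w^*}(\B)$.

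\textbf{Step 2: \ref{i: K+Delta}.} Every finite-dimensional subspace lies in some $\spann\{e_1,\dots,e_N\}$. On that span the norm is a maximum of finitely many linear functionals, so its unit ball is a polytope; hence $X$ is (K)-polyhedral. For ($\Delta$), I will use Fact~\ref{fact: Delta iff B cap D finite}, which allows checking ($\Delta$) with the boundary $\B$. Fix $x\in S_X$ with support in $\{1,\dots,N\}$. For $n>N$ we have $|\pm h_n(x)|\leq|x_1|/(1+a_n)<1$, because $a_n>0$ and $|x_1|\leq 1$. Also $\pm e_n^*(x)=0$ for $n>N$. Therefore $\D(x)\cap\B$ is finite.

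\textbf{Step 3: $\D(e_1)=\{e_1^*\}$.} First, $\|e_1\|=1$, and the only element of $\B$ attaining $1$ at $e_1$ is $e_1^*$, since $h_n(e_1)=1/(1+a_n)<1$. By Milman's converse to Krein--Milman together with Step~1, $\ext B_{X^*}\subseteq\overline{\B}^{w^*}$. I will identify the $w^*$-cluster points of $\B$ by testing on elements of $c_{00}$:
\begin{itemize}
\item the sequence $(\pm e_n^*)$ is $w^*$-null;
\item any cluster point of $(h_n)$ has the form $\lambda e_1^*$, where $\lambda$ is a cluster value of $1/(1+a_n)$, so $\lambda\in[1/2,1]$.
\end{itemize}
Such a functional takes the value $1$ at $e_1$ only if $\lambda=1$, in which case it is $e_1^*$. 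Hence $\ext\D(e_1)=\D(e_1)\cap\ext B_{X^*}=\{e_1^*\}$, using Fact~\ref{fact: basic D(x)}. By Krein--Milman, $\D(e_1)=\{e_1^*\}$.

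\textbf{Step 4: \ref{i: e*1 not true face} and \ref{i: faces don't cover}.} Let $H=(e_1^*)^{-1}(1)$ and $F=H\cap B_X$. I will show that $F$ has empty interior in $H$. Fix $y\in F$ with support in $\{1,\dots,N\}$. Choose $n>N$ from a subsequence along which $a_n/b_n\to 0$, and set $t\coloneqq 2a_n/b_n$. Then $y+te_n\in H$ and
\[ \|te_n\|=\max\{t,\,b_nt/(1+a_n)\}\leq t\to 0. \]
On the other hand,
\[ h_n(y+te_n)=\frac{1+b_nt}{1+a_n}=\frac{1+2a_n}{1+a_n}>1, \]
so $y+te_n\notin B_X$. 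Thus points of $H\setminus B_X$ accumulate at $y$, so $\inte_H F=\emptyset$ and $F$ is not a true face. Finally, since $\D(e_1)=\{e_1^*\}$ by Step~3, $F$ is the only face of $B_X$ containing $e_1$. So $e_1$ lies in no true face, which proves \ref{i: faces don't cover}.

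The step I expect to be the main obstacle is Step~3, namely pinning down $\D(e_1)$ from the $w^*$-closure of $\B$. If the cluster-point bookkeeping turns out to be delicate, the fallback is Proposition~\ref{prop: poly VS smooth}: it suffices to show that $e_1$ is not an extreme point of any $2$-dimensional section. This should be checkable directly, since on $\spann\{e_1,v\}$ the norm near $e_1$ equals $|x_1|$ whenever $v$ has finite support.
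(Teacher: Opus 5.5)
Your proof is correct. Steps~1, 2 and~4 follow the paper's argument. For \ref{i: e*1 not true face} the paper argues by contradiction: if every point of $H$ within distance $r$ of $x$ lay in $B_X$, then $x+re_n\in B_X$ would force $r\leq a_n/b_n$ for all $n>\max(\supp(x))$. Your explicit points $y+te_n$ with $t=2a_n/b_n$ are the same computation run forwards.

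The real difference is Step~3. The paper gets $\D(e_1)=\{e_1^*\}$ in one line. It combines $\D(e_1)\cap\B=\{e_1^*\}$ with Lemma~\ref{lemma: boundary VS extreme points}, which says $\D(x)=\cconv^{w^*}(\D(x)\cap\B)$ for every boundary $\B$ of a polyhedral space; this is exactly where polyhedrality is used. You instead compute $\overline{\B}^{w^*}$ by hand and then apply Milman's theorem and Krein--Milman. That uses only the concrete shape of $\B$ and no polyhedrality.

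The bookkeeping you were worried about is harmless. $\overline{\B}^{w^*}$ is the union of the closures of the four sequences $(\pm e_n^*)$ and $(\pm h_n)$. Since the $w^*$-topology is Hausdorff, every $w^*$-neighbourhood of a point $f\in\overline{\B}^{w^*}\setminus\B$ contains elements of $\B$ of arbitrarily large index. Testing on $e_1,\dots,e_K$ then forces $f(e_k)=0$ for $k\geq 2$, and $f(e_1)\in\{0,\pm\lambda\}$ with $\lambda\in[1/2,1]$ a cluster value of $(1+a_n)^{-1}$.

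A side benefit of your route is that the same computation gives $\D(x)\cap\overline{\B}^{w^*}=\D(x)\cap\B$ for every $x\in S_X$. Indeed, since $|x_1|\leq 1$, a functional $\pm\lambda e_1^*$ can attain $1$ at $x$ only if $\lambda=1$. Hence ($\Delta$) follows directly from $\ext\D(x)\subseteq\D(x)\cap\overline{\B}^{w^*}$, without Fact~\ref{fact: Delta iff B cap D finite}, which itself rests on Lemma~\ref{lemma: boundary VS extreme points}. The paper's route is shorter once that lemma is available, while yours is self-contained.

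Your fallback through \ref{item: 2-dim section}$\implies$\ref{item: singleton} of Proposition~\ref{prop: poly VS smooth} also works. It is essentially the direct algebraic-interior computation in the remark that follows this proposition in the paper.
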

	
\begin{proof} To begin with, $\n$ is clearly a norm on $c_{00}$, equivalent to $\n_\infty$. Moreover, by definition $\B\subseteq B_{X^*}$. The crucial observation that most of the proof depends on is the following one. If $x\in X$ and $N\coloneqq \max (\supp(x))$, then for all $n>N$ we have
\begin{equation}\label{eq: large h_n don't count}
    |h_n (x)|= \frac{|e_1^*(x) + b_ne_n^*(x)|}{1+a_n}= \frac{|e_1^*(x)|}{1+a_n}\leq \frac{\|x\|}{1+a_n}< \|x\| \quad\text{and}\quad e^*_n(x)=0.
\end{equation}
As a consequence, we obtain
\begin{equation}\label{eq: norm given by 1...N}
    \|x\|=\max\left\{ \max_{1\leq n\leq N}|e^*_n(x)|, \max_{2\leq n\leq N}|h_n(x)| \right\}.
\end{equation}
Therefore, $\B$ is a boundary, which gives that $B_{X^*}= \cconv^{w^*}(\B)$, and proves \ref{i: B boundary}.

Next, to show that $X$ is (K)-polyhedral, let $Z\coloneqq \spann\{z_1, z_2\}\subseteq X$ and let $N\coloneqq \max (\supp(z_1) \cup \supp(z_2))$. Thus, for all $x\in Z$ we have $\max(\supp(x))\leq N$, so \eqref{eq: norm given by 1...N} holds for the same index $N$ for all $x\in Z$. Hence, $B_Z$ is a polytope, and $X$ is (K)-polyhedral. Moreover, \eqref{eq: large h_n don't count} also implies that $\D(x)\cap \B$ is finite for all $x\in S_X$. As $\B$ is a boundary, Fact~\ref{fact: Delta iff B cap D finite} implies that $X$ has property ($\Delta$), and \ref{i: K+Delta} is also proved.

For \ref{i: e*1 not true face}, let $H\coloneqq (e_1^*)^{-1}(1)$ and suppose by contradiction that $H\cap B_X$ is a true face of $B_X$. Then, there exists $x\in \inte_H(B_X\cap H)$, thus we can find $r>0$ such that all $y\in H$ with $\|x-y\|\leq r$ belong to $B_X$. Since it is easily checked that $\|e_n\|=1$ for all $n$, it follows in particular that $x+re_n\in B_X$ for all $n\geq 2$. As a consequence, since $x+r e_n\in H$, for all $n> N\coloneqq \max(\supp(x))$ we obtain that
\[ |h_n(x + r e_n)|= \frac{|(e^*_1+ b_n e^*_n) (x+ re_n)|}{1+a_n}= \frac{1+ b_n r}{1+a_n} \leq 1, \]
whence
\[ 0<r \leq \frac{a_n}{b_n}, \]
for every $n > N$. However, this contradicts the assumption that $\inf_{n\geq 2}\frac{a_n}{b_n}= 0$.

Finally, for \ref{i: faces don't cover}, \eqref{eq: large h_n don't count} implies that $\D(e_1)\cap \B= \{e^*_1\}$ (recall that $h_n$ is only defined for $n\geq 2$). Thus, Lemma~\ref{lemma: boundary VS extreme points} gives $\D(e_1)= \{e^*_1\}$. Hence, if a true face contains $e_1$, it must be the one induced by $e^*_1$. But $(e_1^*)^{-1}(1)\cap B_X$ is not a true face, by \ref{i: e*1 not true face}.
\end{proof}

\begin{rem} The fact that $\D(e_1)= \{e^*_1\}$ can be given an alternative proof, by showing directly that $e_1$ is in the algebraic interior of $(e_1^*)^{-1}(1)\cap B_X$. Since this argument is simple and possibly instructive, we present it here. We aim to show that for all $y\in \ker e^*_1$ there exists $r>0$ such that $\|e_1+ ty\|\leq 1$ for all $|t|\leq r$. Assume without loss of generality that $\|y\|_\infty=1$. Plainly, $e^*_1(e_1+ ty)= 1$, while for $n\geq 2$, $|e^*_n(e_1+ ty)|= |ty_n|\leq r$. Further, for $n\geq 2$,
\[ |h_n(e_1+ ty)|= \left| \frac{1+ tb_ny_n}{1+a_n} \right|\leq \frac{1+ r b_n |y_n|}{1+a_n}. \]
When $n\notin \supp(y)$, this quantity is less than $1$; instead, for $n\in \supp(y)$, it is at most $1$ if and only if $r\leq \frac{a_n}{b_n|y_n|}$. Thus, setting
\[ r\coloneqq \min \left\{1, \frac{a_n}{b_n|y_n|}\colon n\in \supp(y)\right\} \]
does the job (recall that $\supp(y)$ is a finite set). Incidentally, this computation also shows that if we instead assumed that $\frac{a_n}{b_n}\geq \delta>0$ for all $n\geq 2$, then $e_1$ would belong to the interior of $(e_1^*)^{-1}(1)\cap B_X$.
\end{rem} 

\begin{example}\label{ex: (K) + (Delta) not (VI)} Take a sequence $(\e_n)_{n=2}^{\infty}\subseteq (0,1)$ such that $\e_n\to 0$ and apply the construction in Proposition \ref{prop: c_00 renorming} with $a_n= \e_n$ and $b_n=1$ for all $n\geq 2$. In particular, we have
\[ h_n\coloneqq \frac{e_1^*+ e_n^*}{1+\e_n}. \]
Then, the resulting space $X$ is a renorming of $c_{00}$ which is (K)-polyhedral, has property ($\Delta$), and yet $S_X$ is not covered by true faces. On the other hand, Theorem~\ref{thm: structural in normed space} implies that $S_X$ is covered by algebraic true faces. In particular, it follows that $X$ is not (VI)-polyhedral, thus (K)-polyhedral normed spaces with ($\Delta$) need not be (VI)-polyhedral. (However, recall that the implication holds for Banach spaces, \cite{InfPoly}*{Theorem 3.6}.)

We can also use this example to show that \ref{item: G-smooth} does not imply \ref{item: frechet} in Proposition~\ref{prop: poly VS smooth}. In fact, by Proposition~\ref{prop: c_00 renorming}\ref{i: faces don't cover}, $\D(e_1)=\{e_1^*\}$, so the point $e_1$ is a G\^ateaux smooth point in $B_X$. Still, it is not a Fr\'echet smooth point. In fact, $h_n(e_1)=\frac{1}{1+\e_n}\to 1$ as $n\to\infty$ and yet $(h_n)_{n=1}^{\infty}$ does not converge to $e^*_1$ (in norm). Thus, by \v{S}mulyan Lemma, $e_1$ is not a Fr\'echet smooth point. 
\end{example}

The next example is a variation of the preceding one and it aims to show that \ref{item: frechet} does not imply \ref{item: true face} in Proposition~\ref{prop: poly VS smooth}. In the example we will need the following folklore lemma, very similar, \emph{e.g.}, to \cite{Bourgin}*{Lemma 3.5.1}, \cite{ChoiJung}*{Lemma 2.1}, or \cite{LipDaugavet}*{Lemma 5.5}. Recall that (\emph{e.g.}, \cite{FHHMZ}*{Definition 7.10}), given a set $A\subseteq X$, a point $p\in \overline{A}$, and $f\in X^*$, $f$ \emph{strongly exposes $p$ in $A$} if for each sequence $(x_n)_{n= 1}^\infty$ in $A$
\[ f(x_n)\to \sup_{x\in A}f(x)\quad \text{implies}\quad x_n\to p. \]

\begin{lemma}\label{lemma: strongly exposed convex hull} Let $A\subseteq X$ be a bounded set, $p\in \overline{A}$, and $f\in X^*$. Suppose that $f$ strongly exposes $p$ in $A$, then $f$ strongly exposes $p$ in $\overline{\conv}(A)$.
\end{lemma}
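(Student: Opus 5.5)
We argue by contradiction, reducing to the definition of strong exposition on $A$ by an averaging (Markov-type) argument. Set $M\coloneqq \sup_{x\in A} f(x)$. This is also $\sup_{x\in\cconv(A)} f(x)$, since $f$ is linear and continuous. Take $R>0$ with $A\subseteq B(0,R)$, so that $\cconv(A)\subseteq B(0,R)$ as well. Suppose the conclusion fails. Then there are $\e>0$ and a sequence $(y_n)$ in $\cconv(A)$ with $f(y_n)\to M$ and $\|y_n-p\|\geq 2\e$ for all $n$. By continuity of $f$ and of the norm, we may perturb each $y_n$ slightly and assume $y_n\in\conv(A)$, with $f(y_n)>M-1/n$ and $\|y_n-p\|\geq\e$.

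The key step is a quantitative reformulation of the hypothesis on $A$. We claim there is $\eta>0$ such that every $x\in A$ with $\|x-p\|\geq \e/2$ satisfies $f(x)\leq M-\eta$. If this failed, we could pick $x_k\in A$ with $f(x_k)\to M$ and $\|x_k-p\|\geq\e/2$. This contradicts the fact that $f$ strongly exposes $p$ in $A$.

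Now write $y_n=\sum_i \lambda_i x_i$ as a finite convex combination with $x_i\in A$. Split the indices into the set $I$, where $\|x_i-p\|<\e/2$, and its complement $J$. Let $\mu\coloneqq\sum_{i\in J}\lambda_i$. Then
\[ M-1/n< f(y_n)\leq M-\mu\eta, \]
so $\mu<1/(n\eta)$. On the other hand,
\[ \|y_n-p\|\leq \sum_{i\in I}\lambda_i\|x_i-p\|+\sum_{i\in J}\lambda_i\|x_i-p\|\leq \e/2+\mu(R+\|p\|). \]
Since $p\in\overline A$, we have $\|p\|\leq R$. The right-hand side therefore tends to $\e/2$ as $n\to\infty$, which contradicts $\|y_n-p\|\geq \e$.

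The only delicate points are the uniform gap $\eta$ and the reduction from $\cconv(A)$ to $\conv(A)$. The gap $\eta$ is exactly a restatement of strong exposition by sequences. The reduction works because $f$ and the norm are continuous, so approximating points of the closure loses nothing. I therefore expect the uniform-gap step to be the crux, but it follows directly from the sequential definition.
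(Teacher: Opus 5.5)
Your proof is correct and is essentially the paper's argument: your uniform gap $\eta$ corresponds to the paper's threshold $\alpha$ (the paper first translates $p$ to $0$), and you then split the convex combination into near and far points, show the mass on the far points tends to $0$, and bound $\|y_n-p\|$ by $\e/2$ plus a vanishing term, which is exactly how the paper passes from $A$ to $\conv(A)$. The differences are cosmetic: you argue by contradiction and fold the passage from $\conv(A)$ to $\cconv(A)$ into an initial perturbation (after a harmless passage to a subsequence), whereas the paper handles that approximation as a separate first step.
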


\begin{proof} We first show that if $f$ strongly exposes $p$ in $\conv(A)$, then $f$ strongly exposes $p$ in $\overline\conv(A)$. Let $(x_n)_{n= 1}^\infty\subseteq \overline\conv(A)$ be such that 
\[ f(x_n)\to \sup_{x\in A} f(x)=f(p). \]
For each $n\in\N$ take $y_n\in\conv(A)$ such that $\|y_n-x_n\|<\frac1n$. Hence,
\begin{equation*}
    |f(y_n)-f(p)|\leq \|f\|\cdot\|y_n-x_n\| + |f(x_n)-f(p)|\to 0.
\end{equation*}
Since $f$ strongly exposes $p$ in $\conv(A)$, we have $y_n\to p$; thus $x_n\to p$ as well. 

It remains to show that if $f$ strongly exposes $p$ in $A$, then $f$ also strongly exposes $p$ in $\conv(A)$. Without loss of generality we assume that $p=0$. In particular, $f(a)\leq 0$ for all $a\in A$. Let $(x_n)_{n=1}^\infty\subseteq \conv(A)$ be such that $f(x_n)\to f(p)= 0$. For each $n$, there exists $\{\lambda_{n,a}\}_{a\in A}\subseteq [0,1]$ such that
\[ x_n=\sum_{a\in A}\lambda_{n,a} a, \quad \sum_{a\in A}\lambda_{n,a}=1, \]
and, for each $n\in \N$, only finitely many $\lambda_{n,a}$'s are non-zero. Fix $\e>0$. Since $f$ strongly exposes $0$ in $A$, there exists $\alpha<0$ such that $\|a\|\leq \e$ for all $a\in A$ with $f(a)\geq \alpha$. Let $I\coloneqq \{a\in A\colon f(a)\geq \alpha\}$. We claim that
\begin{equation}\label{eq: sum lambdas to 0}
    \sum_{a\in A\setminus I}\lambda_{n,a} \to 0, \quad \text{as } n\to \infty.    
\end{equation}
Indeed,
\begin{equation*}
    f(x_n)= \sum_{a\in I} \underbrace{\lambda_{n,a} f(a)}_{\leq 0} + \sum_{a\in A\setminus I } \lambda_{n,a} f(a) \leq \alpha \sum_{a\in A\setminus I} \lambda_{n,a}\leq 0.
\end{equation*}
Taking the limit for $n\to \infty$, we get $ \lim_{n\to \infty} \alpha\sum_{a\in A\setminus I} \lambda_{n,a}= 0$. Since $\alpha\neq 0$, the claim is proved. Finally, we show that $x_n\to 0$. As $A$ is bounded, $M\coloneqq \sup_{a\in A}\|a\|< \infty$. Thus, we have (recall that $\|a\|\leq \e$ when $a\in I$)
\begin{equation*}
    \|x_n\|\leq \sum_{a\in I}\lambda_{n,a}\|a\| + \sum_{a\in A\setminus I}\lambda_{n,a}\|a\|  \leq \e \sum_{a\in I}\lambda_{n,a} + M\sum_{a\in A\setminus I}\lambda_{n,a}\leq \e+ M\sum_{a\in A\setminus I}\lambda_{n,a}.
\end{equation*}
Letting $n\to \infty$, by \eqref{eq: sum lambdas to 0} we get 
\begin{equation*}
    0\leq \limsup_{n\to \infty}\|x_n\|\leq \e.
\end{equation*}
Since $\e>0$ was arbitrary, we are done.
\end{proof}
    
\begin{example}\label{ex: Frechet vs true face} The goal of this example is to construct an equivalent polyhedral norm on $c_{00}$ with a Fr\'echet smooth point which does not belong to the relative interior of any true face. This shows that in general \ref{item: frechet} does not imply \ref{item: true face} in Proposition~\ref{prop: poly VS smooth}. Take a sequence $(\e_n)_{n=2}^{\infty}\subseteq (0,1)$ such that $\e_n\to 0$ as $n\to \infty$ and consider, for each $n\geq 2$, the functionals 
\[ h_n\coloneqq \frac{e_1^* + \sqrt{\e_n} e_n^*}{1+\e_n} \]
(namely, we choose $a_n= \e_n$ and $b_n= \sqrt{\e_n}$ in Proposition~\ref{prop: c_00 renorming}). Then, the space $X$ constructed in Proposition~\ref{prop: c_00 renorming} is (K)-polyhedral, has ($\Delta$), and $e_1$ does not belong to any true face. However, we show that $e_1$ is a Fr\'echet smooth point in $B_X$. Again by \v{S}mulyan lemma, this amounts to proving that if a sequence $(g_n)_{n=1}^\infty$ in $B_{X^*}$ satisfies $g_n(e_1)\to 1$, then $g_n\to e^*_1$ (in the norm of $X^*$). In other words, $e_1$ (seen in $X^{**}$) strongly exposes $e^*_1$ in $B_{X^*}$.

For this, we start by observing that if $\cal A\coloneqq \{\pm h_n\}_{n\geq 2}$,  by Proposition~\ref{prop: c_00 renorming}\ref{i: B boundary}
\[ B_{X^*}= \cconv^{w^*}(\B)= \cconv^{w^*}\left( B_{(\ell_1,\n_1)}\cup \cconv(\cal A) \right). \]
However, the sequence $(h_n)_{n= 2}^\infty$ converges in norm to $e_1^*$. Thus, $\cconv(\cal A)$ is norm compact (so, $w^*$-compact as well). As $B_{(\ell_1,\n_1)}$ is also $w^*$-compact, we obtain
\[ B_{X^*}= \conv\left( B_{(\ell_1,\n_1)}\cup \cconv(\cal A) \right). \]

We now use this equality and Lemma~\ref{lemma: strongly exposed convex hull} to show that $e_1$ strongly exposes $e_1^*\in B_{X^*}$. First, we prove that $e_1$ strongly exposes $e_1^*$ in $\cal A$. Given any sequence $(g_n)_{n=1}^{\infty}$ in $\cal A$ such that $g_n(e_1)\to 1$, take $k_n\in\N$ such that $g_n= h_{k_n}$. Hence, we have
\[ g_n(e_1)=\frac{1}{1+\e_{k_n}}\to 1, \]
so the sequence $(k_n)_{n=1}^{\infty}$ goes to infinity. Since, as observed above $h_n\to e^*_1$, we get that $g_n$ converges to $e_1^*$. Thus, $e_1$ strongly exposes $e_1^*$ in $\cal A$ and Lemma~\ref{lemma: strongly exposed convex hull} implies that $e_1$ strongly exposes $e_1^*$ in $\cconv(\cal A)$. 

Moreover, $e_1$ clearly strongly exposes $e_1^*$ in $B_{(\ell_1,\n_1)}$, thus it also strongly exposes $e_1^*\in B_{\ell_1}\cup \cconv(\cal A)$. Finally, a second application of Lemma~\ref{lemma: strongly exposed convex hull} yields that $e_1$ strongly exposes $e_1^*\in \conv(B_{\ell_1}\cup \cal A)= B_{X^*}$.

To conclude this example, let us observe that the boundary $\B$ actually coincides with $\wstr(B_{X^*})$. In fact, using Lemma~\ref{lemma: strongly exposed convex hull} as above it is immediate to check that $e_n$ strongly exposes $e^*_n$ and that $\frac{1+\e_n}{1+ \sqrt{\e_n}}(e_1+ e_n)$ strongly exposes $h_n$ for all $n\geq 2$.  Thus, each element of $\B$ is a $w^*$-strongly exposed point in $B_{X^*}$, and the fact that $\wexp(B_{X^*})$ is contained in each boundary implies that $\B= \wstr(B_{X^*})$. Hence, $\wstr(B_{X^*})$ being a boundary does not imply that $S_X$ is covered by true faces (see Remark~\ref{rmk: str exp vs true faces}).
\end{example}

\begin{rem} We conclude this section with one remark on Lemma~\ref{lemma: strongly exposed convex hull}, related to Lipschitz-free spaces. It is proved in \cite{LipDaugavet}*{Theorem 5.4} that a molecule $\frac{\delta_x- \delta_y}{d(x,y)}$ is a strongly exposed point of $B_{\cal F(M)}$ if and only if there is a function $f\in \textup{Lip}_0(M)$ that peaks at the pair $(x,y)$. From the definition of peaking function (see, \emph{e.g.}, \cite{LipDaugavet}*{Definition 5.2}, or \cite{Weaver}*{Definition 3.30}) it is clear that $f$ peaks at the pair $(x,y)$ if and only if it strongly exposes the molecule $\frac{\delta_x- \delta_y}{d(x,y)}$ in the set $\textup{Mol}(M)$ of all molecules on $M$. Since $\cconv(\textup{Mol}(M))= B_{\cal F(M)}$ by \cite{Weaver}*{Proposition 3.29}, the result in \cite{LipDaugavet} is a particular case of Lemma~\ref{lemma: strongly exposed convex hull}.
\end{rem}

\subsection{(V)-polyhedral normed spaces without algebraic true faces} \label{sec: countable dim}
In this part we generalise the result of Section~\ref{sec: ell1,0} to every normed space of countable dimension. In particular, this gives a (V)-polyhedral renorming of $c_{00}$ with no algebraic true faces.

\begin{theorem}\label{thm: V no true faces} Let $X$ be any normed space with countable dimension. Then, there exists a \textup{(V)}-polyhedral renorming $\nn$ of $X$ with no algebraic true faces. Further, the unit ball of $(X,\nn)$ is the convex hull of its extreme points.
\end{theorem}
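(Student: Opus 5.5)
The plan is to reduce everything to Example~\ref{ex: (V) no alg true faces}. I would renorm $X$ so that it becomes \emph{isometric} to $\ell_{1,0}$, with the $\ell_1$-structure taken with respect to a suitable algebraic (Hamel) basis. Suppose $(x_n)_{n=1}^\infty$ is a Hamel basis of $X$ with $\|x_n\|=1$. Define
\[ \vertt{x}\coloneqq \sum_n |a_n| \qquad\text{for } x=\sum_n a_nx_n \]
(a finite sum). Then $B_{(X,\nn)}=\conv\{\pm x_n\}_{n}$, and the map $x_n\mapsto e_n$ is a linear isometry of $(X,\nn)$ onto $\ell_{1,0}$.

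All the conclusions of the theorem would then be inherited from Example~\ref{ex: (V) no alg true faces}: $\ell_{1,0}$ is (V)-polyhedral, it has no algebraic true face, and $\ext B_{\ell_{1,0}}=\{\pm e_n\}$, so $\conv(\ext B)=B$. Each of these properties is invariant under linear isometries; (V)-polyhedrality and algebraic true faces are defined purely in terms of the norm and the dual. So the only real content is to choose $(x_n)$ so that $\nn$ is \emph{equivalent} to $\n$.

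One inequality is automatic: by the triangle inequality, $\|x\|\leq\vertt{x}$. For the other, I need a constant $c>0$ with $cB_X\subseteq \conv\{\pm x_n\}$. The tool is a biorthogonal system $\{e_k;e_k^*\}$ with $X=\spann\{e_k\}$, given by Markushevich's result, together with the finite-dimensional subspaces $F_m=\spann\{e_1,\dots,e_m\}$. By compactness, each $B_{F_m}$ is contained in $2\,\conv(\pm P)$ for some finite set $P\subseteq S_{F_m}$. The plan is to build $(x_n)$ in stages. At stage $m$, I add finitely many new unit vectors, each a small perturbation of a point of such a net. The perturbations use fresh coordinates $e_j$ with $j$ larger than every index used so far; they are arranged so that each net point $p$ is \emph{exactly} a convex combination of new vectors. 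For instance, I would take a pair $p\pm\eta e_j$ (renormalised) and correct the normalisation defect with additional fresh directions. Fresh coordinates, together with the biorthogonal functionals $e_j^*$, are meant to give linear independence via a triangular argument. I would also include every $e_k$ eventually (again perturbed), so that the family spans $X$.

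The main obstacle is reconciling the two requirements: \emph{exact} membership of $cB_{F_m}$ in $\conv\{\pm x_n\}$, and linear independence of the whole family. The net points of different stages are necessarily linearly dependent, so they cannot be used directly. Writing them as averages such as $\frac12\bigl((p+\eta e_j)+(p-\eta e_j)\bigr)$ reintroduces $p$ into the span, so independence again depends on $p$ not lying in the span of the earlier vectors.

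What I would try first is the following. Do not require the net at stage $m$ to lie in $F_m$. Instead, choose it in $F_m+\spann\{e_j\colon j \text{ fresh}\}$, in general position with respect to the span $V$ of all previously chosen vectors. Then use the biorthogonal functionals on the fresh coordinates to check that $\conv$ of the new vectors still contains $cB_{F_m}$ for a uniform constant, say $c=\frac14$. Here I would exploit the fact that the fresh perturbations can be taken arbitrarily small in norm, with the finite-dimensional polytope $\conv(\pm P)$ having some slack. If this exact inclusion cannot be arranged, the fallback is to obtain $\frac14 B_{F_m}\subseteq\conv\{\pm x_n\}+\delta_mB_X$ at each stage and to absorb the error by a convexity argument inside a single larger finite-dimensional $F_M$.
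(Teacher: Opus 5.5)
\textbf{There is a genuine gap: the inclusion $cB_X\subseteq\conv\{\pm x_n\}$ that your construction aims for is false in general.} The obstruction is not the linear-independence issue you flag; it is the equivalence requirement itself. Suppose you had a normalised Hamel basis $(x_n)$ with $cB_X\subseteq\conv\{\pm x_n\}$ for some $c>0$. Then $\n\leq\nn\leq c^{-1}\n$, so the identity extends to an isomorphism between the completions of $(X,\n)$ and $(X,\nn)$. Since $(X,\nn)$ is isometric to $\ell_{1,0}$, the completion of $(X,\n)$ would be isomorphic to $\ell_1$.

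Your plan would therefore prove that every normed space of countable dimension is isomorphic to $\ell_{1,0}$, which is false. For $X=(c_{00},\n_\infty)$ the completion is $c_0$, and $c_0$ is not isomorphic to $\ell_1$ (for instance, its dual is separable). Hence no choice of $(x_n)$ and no $c>0$ works, however you arrange the stages, fresh coordinates and perturbations, and no version of your fallback can upgrade approximate inclusions to the exact one. Your transfer step is fine, since the properties are indeed invariant under linear isometries. The problem is the target: an equivalent renorming cannot change the isomorphism class of the completion, so the new norm can only imitate $\ell_{1,0}$ locally.

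That local imitation is what the paper does, working on the dual side and keeping the new norm $(1+\e)$-close to a given one.
\begin{itemize}
\item Start from a (IV)-polyhedral norm with minimal boundary $\B_0$ (Lemma~\ref{lemma: ctble dim => IV}) and a biorthogonal system $\{e_n;e_n^*\}$ with $X=\spann\{e_n\}$ and $\|e_n^*\|=1$. Put $X_n=\spann\{e_1,\dots,e_n\}$ and choose $\e_n$ with $\e_1=1$ and $\sum_n\e_n\leq 1+\e$.
\item Build finite sets $\cal C_n$ by adjoining to $\cal C_{n-1}$ the functionals $f\pm\e_ne_n^*$ with $f\in\cal C_{n-1}$, together with the finitely many elements of $\B_0$ attaining their norm on $S_{X_n}$. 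Set $\vertt{x}\coloneqq\sup\{f(x)\colon f\in\cal C\}$, where $\cal C=\bigcup_n\cal C_n$.
\item On $X_n$, the values taken by $\cal C$ coincide with those taken by the finite set $\cal C_n$ together with $\B_0$. This gives (V), and it shows that each $x\in S_{(X_n,\nn)}$ is normed by some $f\in\cal C_n$.
\item Then $f\pm\e_{n+1}e_{n+1}^*$ are two distinct norming functionals at $x$. So no point of the sphere is G\^ateaux smooth, and Proposition~\ref{prop: poly VS smooth} rules out algebraic true faces.
\item The same functionals handle extreme points. If $x=\frac{y+z}{2}$ with $y,z$ in the new unit ball, then $f(y)=f(z)=1$. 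The inequalities $|(f\pm\e_ke_k^*)(y)|\leq 1$ for $k>n$ then force $y,z\in X_n$. Hence $\ext B_{(X_n,\nn)}\subseteq\ext B_{(X,\nn)}$, and so $B_{(X,\nn)}=\conv(\ext B_{(X,\nn)})$.
\end{itemize}
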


\begin{rem}\label{rmk: V iff 1 not cluster} Let us note here the following characterisation of (V)-polyhedrality, that we use in the proof. A normed space $X$ is (V)-polyhedral if and only if there exists a $1$-norming set $\cal C$ such that, for each $x\in S_X$, $1$ is not a cluster point of the set $\{f(x)\colon f\in \cal C\}$. In fact, this condition is clearly equivalent to
\[ \sup \{f(x)\colon f\in \cal C,\ f(x)< 1\}= \sup \{f(x)\colon f\in \cal C \setminus \D(x)\} <1, \]
which is equivalent to (V)-polyhedrality, by Lemma~\ref{lemma: V poly for bdry}.
\end{rem}

\begin{proof} According to Lemma~\ref{lemma: ctble dim => IV}, there exists norm $\n$ on $X$ that is (IV)-polyhedral. In particular, by Fact~\ref{fact: IV iff D + V}, $(X,\n)$ is (V)-polyhedral and has ($\Delta$). By Markushevich's result \cite{Markushevich} that we recalled in Section~\ref{sec: prelim}, we can write $X=\spann\{e_n\}_{n=1}^\infty$, for some biorthogonal system $\{e_n; e^*_n\}_{n= 1}^\infty$; further, we can assume that $\|e^*_n\| =1$. Let $X_n\coloneqq \spann\{e_1, \dots, e_n\}$. Moreover, we let $\B_0$ be the minimal boundary of $(X,\n)$. We first notice that, for all finite-dimensional subspaces $Y$ of $X$, the set
\begin{equation}\label{eq: norming X_n finite set}
    \{f\in \B_0\colon f(x)=1 \text{ for some } x\in S_Y\} \quad \text{is a finite set.}
\end{equation}
In fact, suppose on the contrary that the set is infinite; then, there are an injective sequence $(f_n)_{n=1}^\infty\subseteq \B_0$ and a sequence $(y_n)_{n=1}^\infty\subseteq S_Y$ such that $f_n(y_n) =1$. We can also assume that $y_n\to y\in S_Y$. Let $f\in \bigl(\{f_n\colon n\in\N\}\bigr)'\subseteq \B_0'\subseteq (\ext B_{X^*})'$; then clearly $f(y)=1$, which contradicts property (IV). Thus, \eqref{eq: norming X_n finite set} is proved.

We now define the set $\cal C$ that will be a boundary for the desired renorming. Fix $\e>0$ and a sequence $(\e_n)_{n=1}^{\infty}\subseteq (0,1]$ such that $\e_1=1$ and $\sum_{n=1}^{\infty}\e_n\leq 1+ \e$. Next, we define inductively the following subsets of $X^*$:
\[ \cal C_1 \coloneqq \{ f \in \B_0\colon f(x) = 1 \text{ for some } x \in S_{X_1} \} \]
and recursively for $n\geq 2$
\[ \B_n \coloneqq \{ f \pm \e_n e_n^*\colon f \in \cal C_{n-1} \}, \]
\[ \cal C_n\coloneqq \cal C_{n-1}\cup \B_n \cup \{ f \in \B_0\colon f(x) = 1 \text{ for some } x \in S_{X_n} \}. \]
Induction and \eqref{eq: norming X_n finite set} readily imply that each $\cal C_n$ is a finite set. Finally, we let $\cal C\coloneqq \bigcup_{n=1}^\infty \cal C_n$ and we define the desired renorming as 
\[ \vertt{x}\coloneqq \sup \{|f(x)| \colon f \in \cal C \}= \sup \{f(x)\colon f \in \cal C\}. \]

Our first task is to show that $\nn$ is an equivalent norm. First, as $\B_0$ is the minimal boundary for $(X,\n)$, for each $f\in \B_0$ there is $x\in S_{(X,\n)}$ with $f(x)=1$; taking $n\in \N$ such that $x\in S_{(X_n,\n)}$, we see that $f\in \cal C_n$. Thus, $\B_0\subseteq \cal C$. On the other hand, an immediate induction shows that each $f\in \cal C_n$ has norm at most $\sum_{j=1}^n \e_j$ (the step $n=1$ is valid because of our choice $\e_1=1$; also recall that $\|e^*_n\|=1$). Hence, $\|f\|\leq 1+ \e$ for all $f\in \cal C$. Combining these two facts gives
\begin{equation}\label{eq: nn close to n}
    \|x\|= \sup\{f(x)\colon f\in \B_0\}\leq \sup\{f(x)\colon f\in \cal C\}\leq (1+\e)\|x\|,
\end{equation}
whence $\nn$ is an equivalent norm. We shall split in steps the rest of the argument, showing that the renorming $(X,\nn)$ has the desired properties.

\begin{step} For all $x\in X_n$, we have
\begin{equation}\label{eq: values in C}
    \{f(x)\colon f\in \cal C\}= \{f(x)\colon f\in \cal C_n\cup \B_0\}.   
\end{equation}
\end{step}

For this, it is clearly enough to prove that for every $k\in \N_0$ the following condition holds: for all $g\in \cal C_{n+k}$ there exists $f\in \cal C_n \cup \B_0$ such that $f(x)=g(x)$. We proceed by induction, the base case $k=0$ being trivially satisfied. 

Suppose now that the condition is true for some $k\geq 0$, and let $g\in \cal C_{n+k+1}$. Then, $g$ belongs to either $\B_0$, or $\cal C_{n+k}$, or $\B_{n+k +1}$. In the first two cases, we are done (using the inductive assumption in the case that $g\in \cal C_{n+k}$). Instead, in the third case, by definition of $\B_{n+k +1}$, there is $h\in \cal C_{n+k}$ such that $g= h\pm\e_{n+k+1} e^*_{n+k+1}$. Thus, $g(x)= h(x)$, as $x\in X_n$. Applying the hypothesis of induction, there exists $f\in\cal C_n\cup \B_0$ such that
\[ f(x)=h(x)=g(x), \]
thus completing the inductive step. Hence, \eqref{eq: values in C} holds.

\begin{step}\label{step: 1 not cluster} $(X,\nn)$ is \textup{(V)}-polyhedral and, if $x\in S_{X_n}$, there is $f\in \cal C_n$ with $f(x)=1$.
\end{step}

For the first assertion, take $x\in S_{(X,\nn)}$ and find $n\in \N$ with $x\in X_n$. As $\cal C$ is by definition $1$-norming, the supremum of the sets in \eqref{eq: values in C} equals $1$. Recall that $\cal C_n$ is a finite set; hence, if $1$ is a cluster point of the sets in \eqref{eq: values in C}, it is also a cluster point of the set
\[ \{f(x)\colon f\in \B_0\}. \]
However, this implies that $x\in S_{(X,\n)}$ and, by Remark~\ref{rmk: V iff 1 not cluster}, that $(X,\n)$ is not (V)-polyhedral, a contradiction. Thus, $1$ is not a cluster point of the sets in \eqref{eq: values in C}, and a second application of Remark~\ref{rmk: V iff 1 not cluster} implies that $(X,\nn)$ is (V)-polyhedral.

For the second clause, take $x\in S_{X_n}$. By the previous argument, the supremum of the sets in \eqref{eq: values in C} equals $1$ and is not a cluster point, so it must be a maximum. Hence, there is $f\in \cal C_n\cup \B_0$ such that $f(x)=1$. In case $f\in \cal C_n$, we are done. On the other hand, if $f\in \B_0$, it is a functional in $\B_0$ that attains its norm on some point of $S_{X_n}$. By definition, $f$ is then among the functionals of $\B_0$ that are added to $\cal C_n$, hence $f\in \cal C_n$ in this case as well, and we are done.

\begin{step} $B_{(X,\nn)}$ has no algebraic true faces and $B_{(X, \nn)}=\conv(\ext B_{(X, \nn)})$.
\end{step}
For the first part, by Proposition~\ref{prop: poly VS smooth} we need to show that, for each $x\in S_{(X,\nn)}$, $\D_{(X,\nn)}(x)$ is not a singleton. Take any $x\in S_{(X,\nn)}$ and find $n\in \N$ with $x\in X_n$. By Step~\ref{step: 1 not cluster}, there exists $f\in \cal C_n$ with $f(x)=1$. By definition, $f\pm \e_{n+1}e^*_{n+1}\in \cal C_{n+1}$ and clearly both belong to $\D_{(X,\nn)}(x)$. Hence, $\D_{(X,\nn)}(x)$ is not a singleton. 

For the second one, since $X= \bigcup_{n=1}^\infty X_n$, we have
\[ B_{(X,\nn)}= \bigcup_{n= 1}^\infty B_{(X_n,\nn)}= \bigcup_{n= 1}^\infty \conv(\ext B_{(X_n,\nn)}) \subseteq\conv \left( \bigcup_{n=1}^\infty \ext B_{(X_n,\nn)} \right). \]
Thus, it is enough to prove that $\ext B_{(X_n, \nn)}\subseteq \ext B_{(X, \nn)}$, for each $n\in \N$. Fix $n\in \N$, take $x\in \ext B_{(X_n, \nn)}$ and suppose that $x= \frac{y+z}{2}$, where $y,z\in B_{(X, \nn)}$. We will show that $y,z\in X_n$, which, as $x\in \ext B_{(X_n, \nn)}$, then implies that $x= y= z$, thus yielding that $x\in \ext B_{(X, \nn)}$. In order to show that $y,z\in X_n$, use Step~\ref{step: 1 not cluster} to find $f\in \cal C_n$ such that $f(x)=1$; then, $f(y)= f(z)= 1$ as well. However, for each $k> n$, we have that $\cal C_n\subseteq \cal C_{k-1}$, whence $f\pm \e_k e^*_k\in \cal C_k\subseteq \cal C$. Thus, 
\[ |1\pm \e_k e^*_k(y)|= |(f\pm \e_k e^*_k)(y)|\leq 1, \]
which yields that $e^*_k(y)=0$, and the same for $z$. Hence, $e^*_k(y)= e^*_k(z)= 0$, for all $k>n$, which implies that $y,z\in X_n$, and concludes this step and the proof.
\end{proof}

\begin{rem} The proof even shows that every norm on $X$ can be approximated by a norm $\nn$ as in the conclusion of the theorem. In fact, by Lemma~\ref{lemma: ctble dim => IV} each norm on $X$ can be approximated by a (IV)-polyhedral norm $\n$. Then, \eqref{eq: nn close to n} shows that the norm $\nn$ can be taken to be arbitrarily close to $\n$.
\end{rem}

\subsection{Density of strongly exposed points}\label{sec: str exp dense}
The goal of this section is the construction of polyhedral normed spaces such that the set of strongly exposed points of the ball is dense in the unit sphere (in particular, this is true for the extreme points). Since this phenomenon is clearly impossible when the unit sphere is covered by true faces (in particular, in a polyhedral Banach space), this gives yet another illustration of the failure of the structural theorem. Notice that, by Lemma~\ref{lemma: few extreme points}, such an example cannot be (VI)-polyhedral and it cannot satisfy ($\Delta$).

\begin{theorem}\label{thm: extreme dense} Let $X$ be a normed space with countable dimension. Then $X$ admits a polyhedral renorming $\nn$ such that $\str(B_{(X,\nn)})$ is dense in $S_{(X,\nn)}$.
\end{theorem}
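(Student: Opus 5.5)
We will construct the norm on $X=\spann\{e_n\}_{n=1}^\infty$, where $\{e_n;e_n^*\}_{n=1}^\infty$ is a biorthogonal system with $\|e_n^*\|=1$ (Markushevich), as the supremum over a countable set $\cal C=\bigcup_n\cal C_n$ of functionals, following the template of Theorem~\ref{thm: V no true faces}. Put $X_n\coloneqq\spann\{e_1,\dots,e_n\}$. The design keeps two properties at once.

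\emph{Property (a):} for $x\in X_n$ only finitely many values $f(x)$, $f\in\cal C$, matter. This gives (K)-polyhedrality exactly as in Proposition~\ref{prop: c_00 renorming}: every finite-dimensional subspace lies in some $X_n$, and there the norm is a maximum of finitely many functionals.

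\emph{Property (b):} every point of $S_{X_n}$ is approximated by points of $S_{X_m}$, $m>n$, that are strongly exposed in the whole ball.

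Start from a (IV)-polyhedral norm $\n$ (Lemma~\ref{lemma: ctble dim => IV}) with minimal boundary $\B_0$, and let $\cal C_1$ be as before. At step $n$, first fix a finite $\e_n$-net $D_n$ of $S_{(X_{n-1},\nn_{n-1})}$, where $\nn_{n-1}$ is the norm given by $\cal C_{n-1}\cup\B_0$ on $X_{n-1}$ (a finite polytope norm). For each $d\in D_n$ we want a point $p_d\coloneqq d+\delta_n e_n$ that becomes a vertex. To achieve this, add to $\cal C_n$ finitely many functionals of the form $f+t e_n^*$, with $f$ ranging over the finitely many functionals of $\cal C_{n-1}\cup\B_0$ active near $d$ and small coefficients $t$. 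These are chosen so that inside $X_n$ the point $p_d$ (after normalising) is a vertex of the polytope $B_{X_n}$, i.e.\ exposed by some $g_d\in\cal C_n$. We also add the functionals $f\pm\e_n e_n^*$, $f\in\cal C_{n-1}$, so that, exactly as in the last step of the proof of Theorem~\ref{thm: V no true faces}, vertices of $B_{X_m}$ stay extreme in $B_X$: any face $f^{-1}(1)$ with $f\in\cal C_m$ meets $B_X$ only inside $X_m$. With coefficients summable, \eqref{eq: nn close to n} again gives an equivalent norm. An inductive identity as in \eqref{eq: values in C} gives (a), provided all later functionals restricted to $X_n$ coincide with earlier ones (they do, since they differ only by multiples of $e_k^*$, $k>n$), so that $B_{X_n}$ is the polytope cut out by $\cal C_n$ together with the finitely many elements of $\B_0$ that are active on $S_{X_n}$ (use \eqref{eq: norming X_n finite set}).

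Density is then straightforward: $\bigcup_n S_{X_n}$ is $S_X$, and each point of $S_{X_{n-1}}$ is within roughly $\e_n+\delta_n$ of some normalised $p_d$. The main obstacle is strong exposure in $B_X$ rather than mere extremality in $B_X$. For a vertex $p$ of $B_{X_m}$, choose $g\in\conv$ of the finitely many functionals of $\cal C_m$ active at $p$, so that $g$ exposes $p$ in $B_{X_m}$. We must show that $g(y_k)\to1$ forces $y_k\to p$ for $y_k\in B_X$ not lying in $X_m$. The argument will be a quantitative version of the extremality step. If $g(y)\ge1-\eta$, then for each active $f$ we have $f(y)\ge1-C\eta$. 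Testing against $f\pm\e_k e_k^*\in\cal C$ for $k>m$ then gives $\e_k|e_k^*(y)|\le C\eta$, hence $|e_k^*(y)|\le C\eta/\e_k$. This alone is not uniform in $k$, so the plan is to add at every later stage $k$ the functionals $f\pm e_k^*$ (coefficient $1$ rather than $\e_k$) for $f$ active at the chosen vertices. These remain bounded because only finitely many are added per stage, and the norm equivalence uses $\|f\pm e_k^*\|\le2$, so we only lose a constant factor. We then get $\sup_{k>m}|e_k^*(y)|\le C\eta$ together with control of the $X_m$-part. This does not yet give norm control of the tail $y-P_my$, since coordinates do not determine the norm. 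I would try to resolve this by choosing the initial norm $\n$ as a $c_0$-type norm dominated by coordinate functionals, which is possible via the construction in Lemma~\ref{lemma: ctble dim => IV} after passing to an equivalent norm in which $\{e_n^*\}$ is $1$-norming up to a constant. Then small coordinates on the tail do give small norm, and Lemma~\ref{lemma: strongly exposed convex hull} is not needed. This uniform tail control is where I expect the real work to lie.
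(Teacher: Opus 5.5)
\textbf{There is a genuine gap: strong exposure in the final ball $B_{(X,\nn)}$ is never established, and your proposed remedy cannot work.} Suppose that for some equivalent norm you had $\|z\|\le C\sup_{k>m}|e^*_k(z)|$ for all $z\in\spann\{e_k\}_{k>m}$ (let alone for all of $X$). Since $P_m$ is bounded and the $e_k^*$ are uniformly bounded, $x\mapsto(e^*_k(x))_k$ would then be an isomorphism of $X$ onto $\R^m\oplus(c_{00},\n_\infty)$, so $\hat X\simeq c_0$. A renorming does not change $\hat X$ up to isomorphism, so this is false for, e.g., $X=(c_{00},\n_2)$ or $\ell_{1,0}$. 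Without such a norm, the bound $\sup_{k>m}|e^*_k(y)|\le C\eta$ says nothing about $\|y-P_my\|$. You do not control the $X_m$-part either, because the functionals in $\cal C_m$ do not vanish on $\spann\{e_k\}_{k>m}$.

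The difficulty is structural. At every finite stage $m$, the ball determined by $\cal C_m\cup\B_0$ coincides near $p$ with a finite intersection of half-spaces: $\cal C_m$ is finite, and by (V) and ($\Delta$) for $\n$ only finitely many elements of $\B_0$ matter near $p$. So $p$ lies in a face of finite codimension and is not even extreme there. In your scheme, extremality, and a fortiori strong exposure, can only emerge in the limit, which is exactly where the uniform tail estimate is needed and unavailable.

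Two further steps are only asserted. First, if $d$ lies in the relative interior of a facet $G$ of $B_{X_{n-1}}$, every hyperplane $\{f+te^*_n=1\}$ with $f(d)=1$ contains the affine hull of $G$. Hence at most one of them passes through the normalised $p_d$, and the claim that $p_d$ becomes a vertex needs an argument you have not given. Second, if the new $f\pm e^*_k$ count as active at later stages, you generate $f\pm e^*_k\pm e^*_{k'}\pm\cdots$ with unbounded norms.

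The missing idea is to make each new point strongly exposed at a \emph{finite} stage, and afterwards only shrink the body while freezing finite-dimensional sections. This works because strong exposure passes to any smaller convex set still containing the point.

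The paper does this on the primal side. Take $P_0$ a polyhedral ball, $(y_n)$ a dense algebraic basis, $X_n=\spann\{y_1,\dots,y_n\}$, and $x_{n+1}\in\R^+\{y_{n+1}\}\cap\partial P_n$. Set $D_{n+1}\coloneqq\conv\big((P_n\cap X_n)\cup(1-\e_{n+1})P_n\big)$ and $P_{n+1}\coloneqq\conv\big(D_{n+1}\cup\{\pm x_{n+1}\}\big)$. Since $D_{n+1}$ meets $\partial P_n$ only inside $X_n$, we have $x_{n+1}\notin D_{n+1}$, and Fact~\ref{fact: new str exp point} gives $x_{n+1}\in\str(P_{n+1})$ already at this stage. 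The global shrinking by $1-\e_{n+1}$ supplies the strict gap, which adding finitely many functionals cannot. Moreover $P_{n+1}\cap X_n=P_n\cap X_n$, so $P\coloneqq\bigcap_nP_n$ satisfies $P\cap X_n=P_n\cap X_n$. Hence $P$ is a polytope, $x_n\in P\subseteq P_n$, and $x_n\in\str(P)$ with no tail estimate at all. Density then follows from the density of $(y_n)$.
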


The rough idea of the proof is to inductively add strongly exposed points by taking the convex hull of a bounded convex body and a point outside it. The fact that the added point is strongly exposed is standard, we sketch a proof via Lemma~\ref{lemma: strongly exposed convex hull}.

\begin{fact}\label{fact: new str exp point} Let $X$ be a normed space and $C\subseteq X$ be a bounded convex body. If $x\in X\setminus C,$ then $x$ is a strongly exposed point of $\conv(\{x\}\cup C)$.
\end{fact}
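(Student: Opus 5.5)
The plan is to separate $x$ from $C$ by a functional and then apply Lemma~\ref{lemma: strongly exposed convex hull} to the set $A\coloneqq \{x\}\cup C$. Since $C$ is closed and convex and $x\notin C$, the Hahn--Banach separation theorem gives $f\in X^*$ and $\delta>0$ with
\[ f(x)\geq \sup_{y\in C} f(y)+\delta. \]
Hence $\sup_{a\in A} f(a)=f(x)$, and $f$ attains this supremum on $A$ only at $x$.

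Next we check that $f$ strongly exposes $x$ in $A$, in the sense of the definition preceding Lemma~\ref{lemma: strongly exposed convex hull}. Let $(a_n)_{n=1}^\infty\subseteq A$ with $f(a_n)\to f(x)$. Every $a_n\in C$ satisfies $f(a_n)\leq f(x)-\delta$, so only finitely many terms can lie in $C$. Eventually $a_n=x$, and therefore $a_n\to x$.

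The set $A$ is bounded because $C$ is bounded, and $x\in A$. Lemma~\ref{lemma: strongly exposed convex hull} therefore shows that $f$ strongly exposes $x$ in $\cconv(A)$. We have $\conv(\{x\}\cup C)\subseteq \cconv(A)$ and $x$ belongs to it, so $f$ also strongly exposes $x$ in the smaller set $\conv(\{x\}\cup C)$; indeed, the strong exposure condition only becomes easier on subsets containing $x$.

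Finally, strong exposure in the sense of Lemma~\ref{lemma: strongly exposed convex hull} implies that $f$ exposes $x$ in the usual sense. If $y\in\conv(\{x\}\cup C)$ with $y\neq x$ and $f(y)=f(x)$, then the constant sequence $(y)$ would have to converge to $x$, which is a contradiction. Thus $x\in\str(\conv(\{x\}\cup C))$.

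There is no real obstacle here. The only point needing care is the uniform gap $\delta$ in the separation; it comes from strict separation of a point from a closed convex set. Boundedness of $C$ is used only to meet the hypothesis of Lemma~\ref{lemma: strongly exposed convex hull}.
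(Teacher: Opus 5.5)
Your proposal is correct and follows essentially the same route as the paper. The paper separates $x$ from $C$ by a functional, notes that $x$ is then strongly exposed in $C\cup\{x\}$, and invokes Lemma~\ref{lemma: strongly exposed convex hull}. You additionally spell out the passage from $\cconv(A)$ to $\conv(\{x\}\cup C)$ and check that $f$ actually exposes $x$, details the paper leaves implicit.
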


\begin{proof} Take a functional $f$ such that $\sup_{y\in C}f(y) < f(x)$. Hence, $x$ is a strongly exposed point of $C\cup \{x\}$, and the assertion follows from Lemma~\ref{lemma: strongly exposed convex hull}. 
\end{proof}

\begin{proof}[Proof of Theorem~\ref{thm: extreme dense}] To begin with, we fix a sequence $(y_n)_{n=1}^\infty$ which is dense in $X$ and an algebraic basis for $X$. The existence of such a sequence follows, \emph{e.g.}, from \cite{BDHMP_bases}*{Proposition~3.2}, but the argument is so short that we recall it here. Fix a basis $(U_n)_{n=1}^\infty$ for the topology of $X$ and find inductively vectors $z_n\in U_n\setminus \spann\{z_1, \dots, z_{n-1}\}$. By construction, $(z_n)_{n=1}^\infty$ is dense and linearly independent, so it can be completed to a (countable) algebraic basis $(y_n)_{n=1}^\infty$ for $X$.

Further, by Lemma~\ref{lemma: ctble dim => IV} we can take a polyhedral norm $\n$ on $X$. Fix $\e\in(0,1)$, and choose a sequence $(\e_n)_{n= 1}^{\infty} \subseteq (0,1)$ with $1-\e\leq \prod_{n=1}^\infty(1- \e_n)$. For each $n\in \N$, we set $X_n\coloneqq \spann\{y_1,\dots, y_n\}$, so that $X= \bigcup_{n=1}^\infty X_n$, and $y_{n+1}\notin X_n$. We also set $X_0\coloneqq \{0\}$, and $P_0\coloneqq B_X$ (which is a polytope). We now define by induction a sequence $(P_n)_{n=1}^\infty$ of symmetric polytopes and points $x_n\in \R^+\{y_n\}$ such that, for all $n\in \N$:
\begin{enumerate}[(i)]
    \item\label{Pn: bodies close} $(1- \e_n)P_{n-1}\subseteq P_n\subseteq P_{n-1}$,
    \item\label{Pn: bodies extend} $P_{n-1}\cap X_{n-1}= P_n\cap X_{n-1}$,
    \item\label{Pn: str exp} $x_n\in \str(P_n)$.
\end{enumerate}

In the construction we will tacitly use several times the following standard fact (see, \emph{e.g.}, \cite{FHHMZ}*{Exercise~1.60}): if $C$ is a closed, bounded convex set, and $K$ is a compact convex set, then $\conv(C\cup K)$ is closed. For the case $n=1$, let $x_1$ be the only point in $\R^+\{y_1\}\cap \partial P_0$, so $x_1\notin (1- \e_1)P_0$. Thus, setting
\[ P_1\coloneqq \conv\big((1- \e_1)P_0\cup \{\pm x_1\} \big), \]
$P_1$ is closed and Fact~\ref{fact: new str exp point} shows that \ref{Pn: str exp} holds. By definition, also \ref{Pn: bodies close} holds, while \ref{Pn: bodies extend} is trivial, as $X_0=\{0\}$. It is also easy to check that $P_1$ is a symmetric polytope. Suppose now, that for some $n\in \N$ we have already constructed $(P_k)_{k=1}^n$ and $(x_k)_{k=1}^n$ with the above properties. As above, we let $x_{n+1}$ be the only point in $\R^+\{y_{n+1}\}\cap \partial P_n$. We then define (see Figure~\ref{FIG-set P})
\[ D_{n+1}\coloneqq \conv\big( (P_n\cap X_n)\cup (1-\e_{n+1})P_n \big). \]
Noting that $(1-\e_{n+1})P_n \subseteq \inte(P_n)$, we see that the unique points of $D_{n+1}$ that can belong to $\partial P_n$ are in $X_n$. Since $x_{n+1}\notin X_n$, we obtain that $x_{n+1}\notin D_{n+1}$. Thus, we can define
\begin{equation}\label{eq: def P n+1}
    P_{n+1}\coloneqq \conv\big( D_{n+1}\cup \{\pm x_{n+1}\} \big);
\end{equation}
once more, $P_{n+1}$ is closed and Fact~\ref{fact: new str exp point} yields that \ref{Pn: str exp} holds. By construction, it is also plain that \ref{Pn: bodies close} is satisfied. For \ref{Pn: bodies extend}, note that by definition $P_n\cap X_n\subseteq D_{n+1}\subseteq P_{n+1}$, giving one inclusion; the converse inclusion is clear, as $P_{n+1}\subseteq P_n$.

\begin{figure}
\centering	
\begin{tikzpicture}[scale=0.6]

	\draw[-](-10,0)--(11,0)node[below]{\small{\( X_n \)}};
	\node at (11,8) {\small{$X_{n+1}$}};
	\draw (0,0) circle[radius=1pt];
	\fill (0,0) circle[radius=1pt];
	\node at (0,.5) {\small{$0$}};

	\coordinate (A) at (-8,0);
	\coordinate (B) at (-6,4);
	\coordinate (C) at (-4,6);
	\coordinate (D) at (4,8);
	\coordinate (E) at (8,4);
	\coordinate (F) at (8,0);
	\coordinate (G) at (6,-4);
	\coordinate (H) at (4,-6);
	\coordinate (I) at (-4,-8);
	\coordinate (L) at (-8,-4);

	\draw[-](-8,0)--(-6,4)--(-4,6)--(4,8)--(8,4)--(8,0)--(6,-4)--(4,-6)--(-4,-8)--(-8,-4)--(-8,0);
	\node at (3.8,7.3) {\small{$P_{n}$}};
	\foreach \pongo in  {A,B,C,D,E,F,G,H,I,L} 
    \fill (\pongo) circle (1pt);

    \coordinate (M) at (-4,0);
	\coordinate (N) at (-3,2);
	\coordinate (O) at (-2,3);
	\coordinate (P) at (2,4);
	\coordinate (Q) at (4,2);
	\coordinate (R) at (4,0);
	\coordinate (S) at (3,-2);
	\coordinate (T) at (2,-3);
	\coordinate (U) at (-2,-4);
	\coordinate (V) at (-4,-2);

	\draw[dashed, thick, line width = 0.3 pt](-4,0)--(-3,2)--(-2,3)--(2,4)--(4,2)--(4,0)--(3,-2)--(2,-3)--(-2,-4)--(-4,-2)--(-4,0);
    \node at (1.2,3) {\small{$(1-\e_{n+1})P_{n}$}};
    \foreach \pongo in  {M,N,O,P,Q,R,S,T,U,V} 
    \fill (\pongo) circle (1pt);

	\draw (12,3) circle[radius=1pt];
	\fill (12,3) circle[radius=1pt];
	\node at (12.8,3) {\small{$y_{n+1}$}};
	\draw[dashed,thick, line width = 0.1 pt](-0,0)--(12,3);

	\draw (8,2) circle[radius=1pt];
	\fill (8,2) circle[radius=1pt];
	\node at (8.8,1.7) {\small{$x_{n+1}$}};	
	\draw (-8,-2) circle[radius=1pt];
	\fill (-8,-2) circle[radius=1pt];
	\node at (-9.1,-2) {\small{$-x_{n+1}$}};	

    \draw[dotted,thick, line width = 0.4 pt](-2,-4)--(-8,0);
	\draw[dotted,thick, line width = 0.4 pt](2,4)--(8,0);
    \draw[dotted,thick, line width = 0.4 pt](2,-3)--(8,0);
    \draw[dotted,thick, line width = 0.4 pt](-2,3)--(-8,0);
    \pattern[pattern=crosshatch dots, opacity=0.2] (-8,0)--(-2,3)--(2,4)--(8,0)--(2,-3)--(-2,-4)--(-8,0);
	\node at (-5.5,-.8) {\small{$D_{n+1}$}};

    \fill[gray, opacity = 0.2](-8,0)--(-2,3)--(2,4)--(8,2)--(8,0)--(2,-3)--(-2,-4)--(-8,-2)--(-8,0);
	\node at (-6.5,-2) {\small{$P_{n+1}$}};

\end{tikzpicture}
\caption{Proof of Theorem~\ref{thm: extreme dense}: in gray the set $P_{n+1}$.}\label{FIG-set P}
\end{figure}

Thus, the induction step is complete once we show that $P_{n+1}$ is a polytope. First, to show that $D_{n+1}$ is a polytope, fix a finite-dimensional subspace $Z$ of $X$ and assume without loss of generality that $X_n\subseteq Z$. Thus,
\[ D_{n+1}\cap Z= \conv \big((P_n\cap X_n)\cup (1-\e_{n+1})(P_n\cap Z) \big) \]
and both $P_n\cap X_n$ and $P_n\cap Z$ are finite-dimensional polytopes. Thus, $D_{n+1}$ is a polytope. The same argument applied to \eqref{eq: def P n+1} shows that $P_{n+1}$ is a polytope.

To conclude the proof, define $P\coloneqq \bigcap_{n=1}^\infty P_n$. Recalling that $P_0= B_X$, \ref{Pn: bodies close} implies that, for each $n\in \N$, $(1-\e)B_X\subseteq P_n\subseteq B_X$. Thus, $(1-\e)B_X\subseteq P\subseteq B_X$, and $P$ is a symmetric bounded convex body. Moreover, \ref{Pn: bodies extend} yields
\[ P_{n-1}\cap X_{n-1}= (P_n\cap X_n)\cap X_{n-1}= (P_{n+1}\cap X_n)\cap X_{n-1}= P_{n+1}\cap X_{n-1}; \]
continuing in the obvious way, we obtain that $P_n\cap X_n= P_k\cap X_n$ whenever $k\geq n$. Thus, as by \ref{Pn: bodies close} the sequence $(P_n)_{n=1}^\infty$ is decreasing, for each $n\in \N$ we have
\begin{equation}\label{eq: P cap Xn}
    P_n\cap X_n= P\cap X_n. 
\end{equation}
As each $P_n$ is a polytope, $P\cap X_n$ is a polytope for each $n$. Since each finite-dimensional subspace of $X$ is contained in some $X_n$, this implies that $P$ is a polytope. Thus, letting $\nn$ be the equivalent norm on $X$ whose unit ball is $P$, we have proved that $(X,\nn)$ is a polyhedral renorming of $X$.

Finally, recalling that $x_n\in X_n$ (because $\R^+\{y_n\}\subseteq X_n$), \eqref{eq: P cap Xn} implies that $x_n\in P$. In turn, \ref{Pn: str exp} and $P_n\subseteq P$ yield that $x_n\in \str(P)$ (use the same functional that strongly exposes $x_n$ in $P_n$).  Also, since $x_n\in \partial P$, the condition that $x_n\in \R^+\{y_n\}$ just means that $x_n= \frac{y_n}{\vertt{y_n}}$. The assumption that $(y_n)_{n=1}^\infty$ is dense in $X$ then easily implies that $(x_n)_{n=1}^\infty$ is dense in $S_{(X,\nn)}$, proving that $\str(B_{(X,\nn)})$ is dense in $S_{(X,\nn)}$.
\end{proof}

\begin{rem} The construction has a certain flexibility, because, for each sequence $(y_n)_{n=1}^\infty$ as at the beginning of the proof, the half-line $\R^+\{y_n\}$ contains a strongly exposed point. On the other hand, the assumptions on the sequence $(y_n)_{n=1}^\infty$ are necessary. In fact, if $\spann\{y_n\}_{n=1}^\infty\neq X$, one can only conclude that this span is polyhedral. Further, if $(y_n)_{n=1}^\infty$ is not linearly independent, it can happen that infinitely many $y_n$'s belong to some finite-dimensional subspace, forcing infinitely many extreme points in a finite-dimensional space, and destroying polyhedrality.
\end{rem}

\begin{rem} As is the case with Theorem~\ref{thm: V no true faces}, the above proof even gives the density of the norms $\nn$ with $\str(B_{(X,\nn)})$ dense in $S_{(X,\nn)}$.
\end{rem}

\subsection{Boundaries}\label{sec: boundary}
The last class of counterexamples we present involves boundaries of polyhedral spaces. For polyhedral Banach spaces, the structural theorem implies the existence of the minimal boundary which coincides with $\wexp(B_{X^*})$ and, in the separable case, the fact that this boundary is countable. We now show that in (separable) polyhedral normed spaces it is possible that there is no countable boundary, or that $\wexp(B_{X^*})$ is not a boundary. Most of what we say is a direct consequence of the previous material, and we just collect it in a single place here.

\begin{example}\label{ex: no ctble bdry} Consider the normed space $X$ of simple functions in $L_1$, namely $X\coloneqq\spann \{\bone_A\colon A\subseteq [0,1]\ \text{measurable}\}$. First, we observe that $X$ is polyhedral. In fact, if $Z$ is a finite-dimensional subspace of $X$, there exists a partition $\{A_1,\dots, A_n\}$ of $[0,1]$ (in measurable sets) such that $F\subseteq \spann \{\bone_{A_k}\colon k=1,\dots, n\}$. Since this linear span is isometric to $\ell_1^n$, which is polyhedral, we see that $X$ is polyhedral.

On the other hand, $X$ doesn't have a countable boundary. To see this, note that if a function $f\in X$ is a.e.~nonzero, then the only norming functional is (the equivalence class of) the function $\sgn (f)$. Thus, every boundary contains the set
\[ \B_0=\{\bone_A- \bone_{[0,1]\setminus A}\colon A\subseteq [0,1]\ \text{measurable} \}. \]
As $\B_0$ has cardinality continuum, we see that $X$ is a separable polyhedral space without a countable boundary. Incidentally, the set $\B_0$ coincides with $\ext (B_{L_\infty})$, so is a boundary. Hence, $\B_0$ is indeed the minimal boundary. Thus, Theorem~\ref{thm: bdry iff alg-ST} implies that $S_X$ is covered by (continuum many) algebraic true faces.

To conclude this example, we show that $X$ doesn't have ($\Delta$) and that its unit sphere doesn't admit any true face. By Proposition~\ref{prop: poly VS smooth}, it then follows that $X$ is not (VI)-polyhedral. To show that $X$ fails property ($\Delta$), take any function $f\in X$ such that $\{f=0\}$ has positive measure. Then, $\B_0\cap \D(f)$ is infinite. In fact, denoting Lebesgue's measure by $\lambda$, the function $[0,1] \ni t\mapsto \lambda (\{x\in [0,t]\colon f(x)=0\})$ is continuous and its value at $1$ is positive. Thus, there is a set $\Lambda\subseteq [0,1]$ of cardinality continuum such that the sets $[0,t]\cap \{f=0\}$ ($t\in \Lambda$) have mutually distinct measures. Hence, the functions
\[ \sgn(f)\cdot \bone_{\{f\neq 0\}}+ \left( \bone_{[0,t]}- \bone_{(t,1]} \right)\cdot \bone_{\{f=0\}} \qquad (t\in \Lambda) \]
are mutually distinct elements of $\B_0\cap \D(f)$. Since $\B_0$ is the minimal boundary, $\ext \D(f)= \ext B_{X^*}\cap \D(f)$ is also infinite. Thus, $X$ doesn't have ($\Delta$). 

For the second clause, suppose that a function $f$ belongs to the interior of some true face. Hence, all functions $g$ in $S_X$ that are sufficiently close to $f$ belong to the interior of the same true face. Thus, $\D(g)= \D(f)$ is a singleton. However, arbitrarily close to $f$ there are functions $g$ such that $\{g=0\}$ has positive measure, whence $\D(g)$ is infinite, a contradiction.
\end{example}

\begin{rem}\label{rmk: bdry of IV} In the opposite direction, recall from Example \ref{ex: ell_Infty^F} that the space $\ell_\infty^F$ is a non-separable polyhedral space that admits a countable boundary. In particular, by Fact~\ref{fact: ctble boundary and IV} it has a (IV)-polyhedral renorming with countable boundary. Thus, even for (IV)-polyhedral normed spaces, the minimal boundary can have cardinality smaller than the density character (compare with Fact~\ref{fact: bdry III}). In particular, the closed convex hull of the minimal boundary doesn't need to coincide with the dual ball (unlike the case for polyhedral Banach spaces).
\end{rem}

The second example is just a direct consequence of the results we already presented. In fact, by Theorem~\ref{thm: bdry iff alg-ST} the fact that $\wexp(B_{X^*})$ is a boundary is equivalent to the fact that $S_X$ is covered by algebraic true faces. Therefore, the space $X= \ell_{1,0}$ of Example~\ref{ex: (V) no alg true faces} is a (V)-polyhedral normed space for which $\wexp(B_{X^*})$ is not a boundary. By Fact~\ref{fact: bdry}, this also means that there is no boundary at all which is minimal with respect to inclusion.

\begin{rem} Finally, we wish to remark that, while $\wexp(B_{X^*})$ being a boundary implies that the unit sphere is covered by algebraic true faces, it does not imply the same with genuine true faces. In fact, the space in Example~\ref{ex: (K) + (Delta) not (VI)} is a (K)-polyhedral space with ($\Delta$), whose unit sphere is not covered by true faces. Yet, $\wexp(B_{X^*})$ is a boundary, because $S_X$ is covered by algebraic true faces, by Theorem~\ref{thm: structural in normed space}. It is even possible to give an example of a (K)-polyhedral space with ($\Delta$) whose unit sphere doesn't have any true face at all, \cite{DDRS}.
\end{rem}

\appendix
\section{Polyhedral normed spaces, a toolbox}\label{sec: appendix}
In this section we give a more detailed (compared to Section~\ref{sec: prelim}) overview of the results concerning polyhedrality that we need in our paper. For most of the results we also give a proof; when we do so, it is only because the proofs in the literature are given for Banach spaces, or use a significantly different notation.

\subsection{Equivalences of some definitions}

\begin{lemma}[\cite{FonfLindVes}*{Lemma~1.5}]\label{lemma: boundary VS extreme points} Let $X$ be a polyhedral normed space, $\B\subseteq B_{X^*}$ be a boundary for $X$, and $x\in S_X$. Then,
\[ \D(x)=\overline\conv^{w^*}[\D(x)\cap \B]. \]
In particular, $\D(x)=\conv[\D(x)\cap \B]$ whenever $\D(x)\cap \B$ is finite.
\end{lemma}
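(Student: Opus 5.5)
Set $A\coloneqq \D(x)\cap\B$ and $K\coloneqq \cconv^{w^*}(A)$. Since $\D(x)$ is $w^*$-compact and convex, $K\subseteq\D(x)$ is immediate, so the work is the reverse inclusion. I will argue by contradiction: suppose $f\in\D(x)\setminus K$. The set $K$ is $w^*$-compact and convex, and the $w^*$-continuous functionals on $X^*$ are exactly the evaluations at points of $X$. Hahn--Banach separation therefore gives $v\in X$ and $c\in\R$ with $f(v)>c\geq g(v)$ for all $g\in K$. The aim is to show that $\B$ then fails to be a boundary at some point $x+tv$ (after normalisation), for small $t>0$.

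The polyhedral input comes from the $2$-dimensional subspace $Y\coloneqq\spann\{x,v\}$; we may assume $v\notin\R x$, since otherwise $f(v)=g(v)$ for every $g\in\D(x)$, which contradicts $A\neq\emptyset$ ($\B$ is a boundary). The ball $B_Y$ is a polygon. So for all small $t>0$ the points $x_t\coloneqq (x+tv)/\|x+tv\|$ lie on a single edge $[x,w]$ of $B_Y$ adjacent to $x$. Consequently there is one functional $\varphi\in S_{Y^*}$ with $\varphi(x)=1$ such that $\varphi(y)=\|y\|$ on the cone over that edge. Moreover $\varphi(v)=\max\{h(v)\colon h\in\D_Y(x)\}$, the one-sided directional derivative, which equals $\max\{g(v)\colon g\in\D(x)\}$ by Hahn--Banach extension.

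Now I use the boundary. For each small $t>0$ pick $g_t\in\B$ with $g_t(x_t)=1$. Every norming functional at a point of the relative interior of the edge restricts to $\varphi$ on $Y$, because a support functional at an interior point of an edge must agree with the edge's line. Hence $g_t(x)=1$ and $g_t(v)=\varphi(v)\geq f(v)>c$. The first equality shows $g_t\in\D(x)\cap\B=A\subseteq K$, and the second then gives $g_t(v)>c$, which contradicts $g(v)\le c$ on $K$. This proves $\D(x)\subseteq K$.

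For the finite case, the convex hull of finitely many points is already compact, hence $w^*$-closed, so $\cconv^{w^*}(A)=\conv(A)$. The main subtle step is the claim that boundary functionals attaining the norm at interior points of the edge satisfy $g_t(x)=1$. It relies on choosing $t$ small enough that $x_t$ is not the far vertex $w$, and this is exactly where (K)-polyhedrality, through the finite number of edges of $B_Y$, is used.
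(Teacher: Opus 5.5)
The paper does not prove this lemma. It only quotes \cite{FonfLindVes}*{Lemma~1.5}, which is stated there for Banach spaces, so there is no in-paper argument to compare with. Your proof is correct and self-contained. It also never uses completeness, and this is exactly what the paper needs: the lemma is applied to incomplete spaces, for instance to get $\D(e_1)=\{e^*_1\}$ in Proposition~\ref{prop: c_00 renorming}\ref{i: faces don't cover}, and in Fact~\ref{fact: Delta iff B cap D finite}.

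The steps check out:
\begin{enumerate}
\item Separating $f$ from the $w^*$-compact convex set $\cconv^{w^*}(A)\subseteq\D(x)$ in $(X^*,w^*)$ yields a vector $v\in X$, because the $w^*$-continuous functionals on $X^*$ are evaluations at points of $X$.
\item $A\neq\emptyset$ because $\B$ is a boundary, and this disposes of the case $v\in\R x$.
\item For small $t>0$ the point $x_t$ lies in the relative interior of an edge $E$ of $B_Y$ containing $x$. It differs from $x$ because $v\notin\R x$, and from the far endpoint by continuity, as you note.
\item Any $g\in B_{X^*}$ with $g(x_t)=1$ equals $1$ on all of $E$. The endpoints of $E$ are linearly independent, since they lie on a line missing $0$, so they span $Y$ and $g\cut_Y=\varphi$.
\item Hence $g_t(x)=1$ and $\|g_t\|\leq 1$, which puts $g_t\in A$ and produces the contradiction with $g_t(v)=\varphi(v)>c$.
\end{enumerate}
The finite case is also right, since a finite convex hull is $w^*$-compact.

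Two small streamlinings are possible. First, a single admissible $t$ suffices. Second, you do not need to identify $\varphi(v)$ with the one-sided directional derivative or to invoke Hahn--Banach extension. Since $x+tv$ lies in the cone over $E$, you have $1+t\varphi(v)=\|x+tv\|\geq f(x+tv)=1+tf(v)$, and this already gives $\varphi(v)\geq f(v)$.

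Your argument also shows how little polyhedrality is used. The only input is that, in the one plane $Y=\spann\{x,v\}$, the sphere $S_Y$ is a straight segment near $x$ on the side of $v$. So the conclusion holds at any point $x$ where all $2$-dimensional sections through $x$ are locally polygonal at $x$.
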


\begin{fact}[\cite{FonfLindVes}*{Remark~1.4(b)}]\label{fact: Delta iff B cap D finite} A polyhedral normed space $X$ has \textup{($\Delta$)} if and only if there exists a boundary $\B$ for $X$ such that $\D(x) \cap \B$ is finite for each $x\in S_X$.
\end{fact}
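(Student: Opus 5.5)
The statement is Fact~\ref{fact: Delta iff B cap D finite}: a polyhedral normed space $X$ has ($\Delta$) if and only if some boundary $\B$ satisfies that $\D(x)\cap\B$ is finite for every $x\in S_X$. I would prove the two directions separately, each time working inside a fixed $\D(x)$.

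\emph{($\Rightarrow$).} Take $\B\coloneqq\ext B_{X^*}$. By Krein--Milman this is a boundary. By Fact~\ref{fact: basic D(x)}\ref{i: ext D(x)=}, $\D(x)\cap\ext B_{X^*}=\ext\D(x)$, and this set is finite by ($\Delta$). This direction does not even use polyhedrality.

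\emph{($\Leftarrow$).} Let $\B$ be such a boundary and fix $x\in S_X$. I would apply Lemma~\ref{lemma: boundary VS extreme points}, which is where polyhedrality enters. Since $\D(x)\cap\B$ is finite, that lemma gives
\[ \D(x)=\conv[\D(x)\cap\B]. \]
The convex hull of a finite set has its extreme points among the generators. Hence $\ext\D(x)\subseteq\D(x)\cap\B$, which is finite, so $X$ has ($\Delta$).

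The only substantive step is the identity $\D(x)=\cconv^{w^*}[\D(x)\cap\B]$ from Lemma~\ref{lemma: boundary VS extreme points}. It fails for non-polyhedral spaces: for example, in $\R^2$ with the Euclidean norm take $\B=S_{X^*}$. Since the paper allows assuming that lemma, no real obstacle remains. I would only check that its finite-case conclusion really is a plain convex hull, which holds because a finite convex hull is already $w^*$-compact and hence equals its $w^*$-closure. If the lemma had to be reproved, the key point would be that, for a polytope in two dimensions, every extreme point of $\D_Y(x)$ on a $2$-dimensional section $Y$ is attained by a boundary element; one would then pass to $X$ via Fact~\ref{fact: basic D(x)}\ref{i: extend extreme} and a $w^*$-separation argument. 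That is where I would expect the work to lie.
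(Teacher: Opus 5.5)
Your proof is correct and is essentially the paper's. The forward direction takes $\B=\ext B_{X^*}$; the converse uses Lemma~\ref{lemma: boundary VS extreme points} and then the fact that a finite convex hull has its extreme points among the generators (the paper invokes Milman's converse of the Krein--Milman theorem at this point). One aside is mistaken, though it plays no role in the proof: in Euclidean $\R^2$ each $\D(x)$ is a singleton that every boundary must contain, so $\D(x)=\cconv^{w^*}[\D(x)\cap\B]$ holds there for every boundary $\B$; a genuine failure requires some $\D(x)$ to have an extreme point that is not $w^*$-exposed in $B_{X^*}$, so that a boundary can omit it.
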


\begin{proof} Fact~\ref{fact: basic D(x)}\ref{i: ext D(x)=} implies that $\ext \D(x)= \D(x)\cap \ext B_{X^*}$, for each $x\in S_X$. Hence, if $X$ has property ($\Delta$), one can take $\B= \ext B_{X^*}$. Vice-versa, if there is a boundary $\B$ such that $\D(x) \cap \B$ is finite for each $x\in S_X$, Lemma~\ref{lemma: boundary VS extreme points} implies that $\D(x)$ is the convex hull of a finite set. Thus, it has finitely many extreme points, by Milman's `converse' of the Krein--Milman theorem (see, \emph{e.g.}, \cite{FHHMZ}*{Theorem 3.66}).
\end{proof}

Given a normed space $X$ and $\B\subseteq B_{X^*}$, $\B$ satisfies \emph{property} (IV) if
\[ f(x) <1\quad  \text{for every } x\in S_X \text{ and } f\in\B'. \]
Similarly, $\B$ satisfies property (V) if
\[ \sup \{f(x)\colon f\in\B\setminus \D(x)\}<1,\quad  \text{whenever } x\in S_X. \]
By definition, $X$ is (IV)-polyhedral [respectively, (V)-polyhedral] if and only if the set $\ext(B_{X^*})$ satisfies property (IV) [respectively, property (V)].

\begin{lemma}[\cite{FonfLindVes}*{Remark~1.4(a)}, \cite{DEPOLY}*{Lemma~3.4}]\label{lemma: V poly for bdry} Let $X$ be a normed space. Then, the following conditions are equivalent:
\begin{enumerate}
    \item\label{item: extreme} $X$ is \textup{(V)}-polyhedral [respectively, \textup{(IV)}-polyhedral];
    \item\label{item: 1 norming} there exists a $1$-norming set $\B\subseteq B_{X^*}$ satisfying property \textup{(V)} [respectively, property \textup{(IV)}].
\end{enumerate}
\end{lemma}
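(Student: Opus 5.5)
The strategy is to prove (a)$\Rightarrow$(b) directly and to prove the converse by passing through $\ext B_{X^*}$, for both the (V) and the (IV) versions. Since $\ext B_{X^*}$ is itself $1$-norming (Krein--Milman, as recalled in Section~\ref{sec: prelim}), (a)$\Rightarrow$(b) is immediate: take $\B=\ext B_{X^*}$, which satisfies (V) [resp.\ (IV)] by the very definition of (V)-polyhedrality [resp.\ (IV)-polyhedrality].

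For (b)$\Rightarrow$(a), let $\B$ be $1$-norming, so $B_{X^*}=\cconv^{w^*}(\B)$. By Milman's converse of the Krein--Milman theorem, $\ext B_{X^*}\subseteq \overline{\B}^{w^*}=\B\cup\B'$.

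\emph{(IV) case.} Let $f\in(\ext B_{X^*})'$ and $x\in S_X$; we want $f(x)<1$. Every $w^*$-cluster point of $\ext B_{X^*}\subseteq \B\cup\B'$ lies in $\B'\cup(\B')'$, and $(\B')'\subseteq\B'$ because cluster-point sets are $w^*$-closed. Hence $f\in\B'$, and property (IV) of $\B$ gives $f(x)<1$.

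\emph{(V) case.} Fix $x\in S_X$ and put $s\coloneqq\sup\{g(x)\colon g\in\B\setminus\D(x)\}<1$. Let $f\in\ext B_{X^*}\setminus\D(x)$. Then $f$ is a $w^*$-limit of a net $(g_\alpha)$ in $\B$. Since $f(x)<1$, eventually $g_\alpha(x)<1$, i.e.\ $g_\alpha\in\B\setminus\D(x)$, so $g_\alpha(x)\le s$ and hence $f(x)\le s$. Therefore $\sup\{f(x)\colon f\in\ext B_{X^*}\setminus\D(x)\}\le s<1$, which is (V)-polyhedrality.

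The one genuinely nontrivial input is the inclusion $\ext B_{X^*}\subseteq\overline{\B}^{w^*}$, which requires Milman's theorem applied in the $w^*$-compact setting. The statement holds for this inclusion because $\overline{\B}^{w^*}$ is $w^*$-compact and $B_{X^*}=\cconv^{w^*}(\B)$. The rest is bookkeeping with nets; in particular, the (V) case hinges on the openness of $\{g\colon g(x)<1\}$ in the $w^*$-topology.
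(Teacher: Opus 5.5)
Your proposal is correct and follows essentially the same route as the paper: you get $\ext B_{X^*}\subseteq\overline{\B}^{w^*}$ from Milman's converse, deduce $(\ext B_{X^*})'\subseteq\B'$ for the (IV) case, and use $w^*$-openness of $\{g\colon g(x)<1\}$ for the (V) case. The only difference is cosmetic. For (V) the paper argues by contradiction: it takes $f_n\in\ext B_{X^*}$ with $f_n(x)\nearrow 1$ and picks $g_n\in\B$ in the open slabs $\{f_n(x)<g(x)<1\}$. Your direct net argument instead gives the slightly sharper bound $\sup\{f(x)\colon f\in\ext B_{X^*}\setminus\D(x)\}\leq\sup\{g(x)\colon g\in\B\setminus\D(x)\}$.
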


\begin{proof} By the Krein--Milman theorem, $\ext(B_{X^*})$ is a boundary, thus \ref{item: extreme} implies \ref{item: 1 norming}. We now prove the other implication, and we start with property (V). Let $\B\subseteq S_{X^*}$ be a 1-norming set satisfying $\mathrm{(V)}$, and suppose on the contrary that there exist $x\in S_X$ and $(f_n)_{n=1}^{\infty}\subseteq\ext(B_{X^*})$ such that $f_n(x)\nearrow 1$ as $n\to\infty$ (in particular, $(f_n(x))_{n=1}^{\infty}$ is a strictly increasing sequence). As $\cconv^{w^*}(\B)=B_{X^*}$, Milman's `converse' of the Krein--Milman theorem implies $\ext(B_{X^*})\subseteq \overline\B^{w^*}$. Thus, $(f_n)_{n=1}^{\infty}\subseteq \overline\B^{w^*}$. Consider the $w^*$-open set
\[ U_n\coloneqq \{f\in X^*\colon f_n(x)<f(x)<1\}. \]
Note that $f_{n+1}\in U_n$, so $U_n\cap \overline{\B}^{w^*}\neq \emptyset$. Hence $U_n\cap \B\neq \emptyset$, and there exists $g_n\in \B$ with $f_n(x)<g_n(x)<1$ (in particular, $g_n\notin \D(x)$). Then,
\[ \textstyle \sup\{g(x)\colon g\in\B\setminus\D(x)\}\geq\sup_n g_n(x)=1, \]
which contradicts that $\B$ has property (V).

The proof for property (IV) is simpler. In fact, as before, for each $1$-norming set $\B$ one has $\ext(B_{X^*})\subseteq \overline\B^{w^*}$, thus $(\ext B_{X^*})'\subseteq \B'$. Hence, if $\B$ has property (IV), the same is true for $\ext(B_{X^*})$.
\end{proof}

\begin{lemma}[\cite{DP-Rocky}]\label{lemma: VI iff D(y)} Let $X$ be a normed space, $x\in S_X$, and $V$ be an open neighbourhood of $x$. Then, the following are equivalent:
\begin{enumerate}
    \item\label{i: segment in sphere} $[x,y]\subseteq S_X$, for all $y\in V\cap S_X$;
    \item\label{i: segment after y} for all $y\in V\cap S_X$, $y\neq x$, there exists $p\in V\cap S_X$ such that $[x,p]\subseteq S_X$ and $y\in (x,p)$;
    \item\label{i: D(y) in D(x)} for all $y\in V\cap S_X$ one has $\D(y)\subseteq \D(x)$. 
\end{enumerate}
In particular, $X$ is \textup{(VI)}-polyhedral if and only if for all $x\in S_X$ there is a neighbourhood $V$ of $x$ such that $\D(y)\subseteq \D(x)$ for all $y\in V\cap S_X$.
\end{lemma}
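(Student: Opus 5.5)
I will prove the cycle \ref{i: segment in sphere}$\implies$\ref{i: D(y) in D(x)}$\implies$\ref{i: segment after y}$\implies$\ref{i: segment in sphere}. The final clause then follows from the definition of (VI) together with \ref{i: segment in sphere}$\iff$\ref{i: D(y) in D(x)}, after shrinking a given neighbourhood to an open one.

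For \ref{i: segment in sphere}$\implies$\ref{i: D(y) in D(x)}, fix $y\in V\cap S_X$ and $f\in\D(y)$. Since $V$ is open, for small $t>0$ the point $z_t\coloneqq y+t(y-x)$ lies in $V$, and so does $w_t\coloneqq z_t/\|z_t\|\in V\cap S_X$, because $\|z_t\|\to 1$. By \ref{i: segment in sphere}, $[x,w_t]\subseteq S_X$, and $y\in(x,w_t)$ by construction, since $y$ is a convex combination of $x$ and $z_t$, hence of $x$ and $w_t$ after rescaling along the segment inside the sphere. More carefully, I will use the following elementary fact: if $[x,w]\subseteq S_X$ and $y\in(x,w)$, then every $f\in\D(y)$ satisfies $f(x)=f(w)=1$. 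Indeed $f\leq 1$ on $B_X$, and $f$ is affine on the segment with value $1$ at an interior point. So I need $y\in(x,w)$ for a suitable $w\in V\cap S_X$ with $[x,w]\subseteq S_X$.

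To get this, I argue as follows. By \ref{i: segment in sphere}, $[x,w_t]\subseteq S_X$ for each small $t$, so the cone spanned by $x,y,w_t$ meets $S_X$ in the broken line. Since $[x,y]$ and $[x,w_t]$ both lie in $S_X$, and $y$ is on the ray from $x$ through $z_t$ direction $y-x$, we have $z_t$ on the line through $x,y$. If $\|z_t\|=1$ we are done with $w=z_t$. Otherwise convexity of the norm along the line $s\mapsto x+s(y-x)$ gives $\|z_t\|\geq 1$, since the norm equals $1$ at $s=0,1$. The segment $[x,w_t]\subseteq S_X$ then forces, via the functional $g\in\D(\tfrac{x+w_t}{2})$ with $g(x)=g(w_t)=1$, that $g(z_t)\ge \|z_t\|^{-1}$. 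This shows the function $s\mapsto\|x+s(y-x)\|$ is constant $1$ near $s=1$, so $\|z_t\|=1$ for small $t$.

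This last step is the main obstacle, and I would treat it with care. The cleanest route is probably to work in the $2$-dimensional span of $x,y$, where $B$ is a convex body. There the point $y$, lying in the relative interior of an edge through $x$ or being on a curve, can be continued within the sphere because nearby sphere points also see $x$ along the sphere. Then \ref{i: D(y) in D(x)} follows from the elementary fact above.

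For \ref{i: D(y) in D(x)}$\implies$\ref{i: segment after y}, take $y\in V\cap S_X$ and $p\coloneqq y+t(y-x)$ normalised, with $p\in V$. Pick $f\in\D(m)$ for $m$ the midpoint of $[x,p]$ scaled to the sphere. Then $f\in\D(x)$, so $f(x)=1$. I will also arrange $f(p)=1$ by applying \ref{i: D(y) in D(x)} to the points of $[x,p]$ after normalisation, all of which lie in $V$ if $t$ is small. Each such point $q$ has $\D(q)\subseteq\D(x)$, so $\|q\|=f_q(q)$ with $f_q(x)=1$. An affine-versus-convex comparison then shows the norm is $\equiv 1$ on $[x,p]$. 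Finally, \ref{i: segment after y}$\implies$\ref{i: segment in sphere} is immediate, since $[x,y]\subseteq[x,p]\subseteq S_X$.
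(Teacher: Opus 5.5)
Your overall plan, the cycle (a)$\implies$(c)$\implies$(b)$\implies$(a), is sound, and several ingredients are correct: the `elementary fact' about $f\in\D(y)$ with $y\in(x,w)$, the bound $\|z_t\|\geq 1$ from convexity of $s\mapsto\|x+s(y-x)\|$, the fact that $w_t\in V\cap S_X$ for small $t>0$, (b)$\implies$(a), and the final clause. The gap is that both nontrivial implications rest on the claim $\|z_t\|=1$ for small $t$, equivalently $w_t=z_t$ so that $y\in(x,w_t)$. You never prove this claim.

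\begin{itemize}
\item In (a)$\implies$(c), your opening assertion `$y\in(x,w_t)$ by construction' is false unless $\|z_t\|=1$. The inequality $g(z_t)\geq\|z_t\|^{-1}$ carries no information, since in fact $g(z_t)=\|z_t\|\,g(w_t)=\|z_t\|$. The claim that the norm is therefore constant near $s=1$ does not follow from it. The appeal to the picture in $\spann\{x,y\}$ is a description, not an argument.
\item In (c)$\implies$(b), you set $p=w_t$ and only aim at $[x,p]\subseteq S_X$. But (b) also requires $y\in(x,p)$, which is again exactly $\|z_t\|=1$, and you do not address it.
\item Also in (c)$\implies$(b), you assert that the normalised points of $[x,p]$ (or the normalised midpoint) lie in $V$. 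This is unjustified: $V$ is only assumed open, not convex. It could be the union of a small ball around $x$ and a small ball around $y$.
\end{itemize}

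The missing step is a single evaluation at $y$. Take $g\in B_{X^*}$ with $g(x)=g(w_t)=1$. Under (a), take $g\in\D\bigl(\tfrac{x+w_t}{2}\bigr)$, which is legitimate because $[x,w_t]\subseteq S_X$. Under (c), take any $g\in\D(w_t)$. Since $y=\frac{\|z_t\|\,w_t+t\,x}{1+t}$, you get
\[ 1=\|y\|\geq g(y)=\frac{\|z_t\|+t}{1+t}, \]
so $\|z_t\|\leq 1$. Combined with $\|z_t\|\geq 1$, this gives $\|z_t\|=1$ and $w_t=z_t$. Hence $y\in(x,w_t)$, and $[x,w_t]\subseteq S_X$ because $g\equiv 1$ on this segment of $B_X$. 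Both of your implications then go through with $p=w_t$, and no point other than $w_t$ needs to lie in $V$.

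With this repair, your argument differs from the paper's mainly in how the segment is extended beyond $y$. The paper proves (a)$\implies$(b) by passing to a $2$-dimensional section $Y$ containing $x,y$ and arguing that $y$ cannot be a vertex of $B_Y$. It then gets (b)$\implies$(c) from your elementary fact, and (c)$\implies$(a) by taking one $f\in\D(y)$, which lies in $\D(x)$ and is therefore identically $1$ on $[x,y]$. Your computation needs neither $2$-dimensional sections nor any polygonal structure of $B_Y$, which suits a lemma stated for arbitrary normed spaces.
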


The equivalence between \ref{i: segment in sphere} and \ref{i: D(y) in D(x)} is $(b_1)\iff (c_2)$ in \cite{DP-Rocky}*{Theorem~3}; we give a more direct argument. Condition \ref{i: segment after y} is an intermediate step that we also use in Lemma~\ref{lemma: few extreme points} and it heuristically means that, not only the segment $[x,y]$ is contained in the sphere, but it can also be extended after $y$ without leaving the sphere.

\begin{proof} For \ref{i: segment in sphere}$\implies$\ref{i: segment after y}, take any $y\in V\cap S_X$, $y\neq x$, and a $2$-dimensional subspace $Y$ of $X$ with $\{x,y\}\subseteq Y$. By assumption, $[x,y]\subseteq S_Y$. We show that $y$ is not a vertex of the polygon $B_Y$, which implies \ref{i: segment after y}. In fact, if $y$ were a vertex of $B_Y$, there would be some $p\in S_Y$ with $[p,y]\subseteq S_Y$ and such that the points $x,y,p$ are not aligned. As $V$ is open and $y\in V$, up to replacing $p$ with some point $p'\in [p,y)$ close enough to $y$, we can assume that $p\in V$. Then, the fact that $x,y,p$ are not aligned implies that $[x,p]\not\subseteq S_Y$. As $[x,p]\subseteq Y$, it follows that $[x,p]\not\subseteq S_X$, contradicting \ref{i: segment in sphere}.

Next, for \ref{i: segment after y}$\implies$\ref{i: D(y) in D(x)}, take any $y\in V\cap S_X$. If $y= x$, the inclusion is trivial, so we assume that $y\neq x$. Thus, there is $p\in V\cap S_X$ with $[x,p]\subseteq S_X$ and $y\in (x,p)$. If $f\in \D(y)$, then $f(x)= f(p)= 1$ as well, so $f\in \D(x)$, and \ref{i: D(y) in D(x)} holds.

Finally, for \ref{i: D(y) in D(x)}$\implies$\ref{i: segment in sphere}, take any $y\in V\cap S_X$, and any $z\in [x,y]$. If $f\in \D(y)$, then by assumption $f\in \D(x)$ as well. Thus, $1= f(z)\leq \|z\|\leq 1$, implying that $z\in S_X$. Hence, $[x,y]\subseteq S_X$, which is \ref{i: segment in sphere}.
\end{proof}

\subsection{Some geometric properties}

\begin{lemma}[\cite{InfPoly}*{Theorem~3.6}, \cite{DePolyII}*{Observation 6.9}]\label{lemma: few extreme points} Let $X$ be a polyhedral normed space.
\begin{enumerate}
    \item\label{extreme: Delta} If $X$ is infinite dimensional and has $(\Delta)$, $B_X$ has no extreme points.
    \item\label{extreme: (VI)} If $X$ is \textup{(VI)}-polyhedral, $\ext(B_X)$ is a discrete set.
\end{enumerate}
\end{lemma}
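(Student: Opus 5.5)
For \ref{extreme: Delta} I would argue by contradiction. Suppose $X$ is infinite dimensional, polyhedral, has ($\Delta$), and $x\in\ext B_X$. By ($\Delta$) and Fact~\ref{fact: basic D(x)}\ref{i: ext D(x)=}, $\ext\D(x)=\{f_1,\dots,f_n\}$ is finite. Consider the finite-codimensional subspace $Z\coloneqq\bigcap_{k=1}^n\ker f_k$. Since $X$ is infinite dimensional, $Z\neq\{0\}$, so I can fix $v\in Z$ with $v\neq 0$.

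Take the $2$-dimensional subspace $Y\coloneqq\spann\{x,v\}$. Since $x\in\ext B_X$, it is also a vertex of the polygon $B_Y$. Hence $\ext\D_Y(x)=\{g_1,g_2\}$ with $g_1\neq g_2$. By Fact~\ref{fact: basic D(x)}\ref{i: extend extreme}, each $g_i$ extends to some element of $\ext\D_X(x)=\{f_1,\dots,f_n\}$. But every $f_k$ vanishes on $v$ and equals $1$ at $x$, so $f_k\cut_Y$ is the single functional determined by $x\mapsto 1$, $v\mapsto 0$. Therefore $g_1=g_2$, which is a contradiction. This part seems routine.

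For \ref{extreme: (VI)}, fix $x\in\ext B_X$. By (VI)-polyhedrality, $x$ has an open neighbourhood $V$ with $[x,y]\subseteq S_X$ for every $y\in V\cap S_X$. I claim that $V\cap\ext B_X=\{x\}$, which gives discreteness.

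Let $y\in V\cap\ext B_X$ and suppose $y\neq x$. By Lemma~\ref{lemma: VI iff D(y)}, the implication \ref{i: segment in sphere}$\implies$\ref{i: segment after y}, there is $p\in V\cap S_X$ with $[x,p]\subseteq S_X$ and $y\in(x,p)$. Then $y$ is an interior point of a segment contained in $B_X$, contradicting $y\in\ext B_X$. Points of $\ext B_X$ lie in $S_X$, so no other extreme point lies in $V$, and thus $\ext B_X$ is discrete.

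The main step to check is the extension of the segment past $y$. This is exactly what Lemma~\ref{lemma: VI iff D(y)}\ref{i: segment after y} provides, via the polygon argument that $y$ cannot be a vertex of a $2$-dimensional section.
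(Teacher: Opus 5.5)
Your proof is correct and follows the paper's argument for both parts. In (a) the paper likewise picks a nonzero vector in the finite-codimensional intersection of kernels and gets a contradiction from the two distinct supporting functionals of the polygon $B_{\spann\{x,y\}}$ at the vertex $x$; in (b) it uses exactly the segment-extension condition of Lemma~\ref{lemma: VI iff D(y)}. The only cosmetic difference is in (a): you extend $g_1,g_2$ to elements of $\ext\D_X(x)$ via Fact~\ref{fact: basic D(x)}\ref{i: extend extreme}, whereas the paper uses Krein--Milman to see that every $f\in\D_X(x)$ vanishes on that intersection and then takes an arbitrary Hahn--Banach extension.
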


\begin{proof} To prove \ref{extreme: Delta}, suppose that $x\in \ext(B_X)$. By assumption, $\ext(\D(x))$ is a finite set, so $\D(x)= \conv(\ext \D(x))$, by the Krein--Milman theorem. Thus,
\[ \bigcap\{\ker(f)\colon f\in \ext(\D(x)) \}= \bigcap\{\ker(f)\colon f\in \D(x) \}, \] 
which is finite-codimensional. Take $y$ in this intersection so that $Y\coloneqq \spann\{x,y\}$ is $2$-dimensional. As $x\in \ext(B_Y)$, there are two distinct functionals $g,h\in \D_Y(x)$. Note that $g(y)\neq h(y)$, since $g(x)=h(x)=1$ and $\{x,y\}$ is a basis of $Y$. Thus, we assume without loss of generality that $g(y)\neq 0$. However, there is $f\in \D_X(x)$ such that $f\cut_Y= g$, so our choice of $y$ gives $f(y)=0$, a contradiction.

For \ref{extreme: (VI)}, suppose that $x\in \ext(B_X)$, and take an open neighbourhood $V$ of $x$ such that $[x,y]\subseteq S_X$ for all $y\in V\cap S_X$. Then, \ref{i: segment after y} of Lemma~\ref{lemma: VI iff D(y)} holds for $V$, hence no point of $V\cap S_X$ different from $x$ is an extreme point.
\end{proof}

\begin{fact}[\cite{FonfLindVes}*{Observation 1.7}, \cite{InfPoly}*{Proposition 5.2}]\label{fact: DeltaQP iff} A normed space $X$ is a \textup{(VI)}-polyhedral normed space with \textup{($\Delta$)} if and only if every point of $S_X$ is a $\Delta$-QP point for $B_X$.
\end{fact}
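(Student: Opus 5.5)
We prove the two implications separately, working with the reformulation of (VI) from Lemma~\ref{lemma: VI iff D(y)} and with Lemma~\ref{lemma: boundary VS extreme points}. Throughout, Remark~\ref{rmk: DeltaQP with 1} lets us normalise the functionals in the definition of a $\Delta$-QP point, since $0\in\inte B_X$.

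\emph{Sufficiency.} Suppose every $x\in S_X$ is a $\Delta$-QP point, with neighbourhood $U$ and functionals $f_1,\dots,f_n$, normalised so that $f_k(x)=1$. By the first paragraph of the proof of Lemma~\ref{lemma: DeltaQP iff locally finite faces}, each $f_k$ supports $B_X$ at $x$, so $f_k\in \D(x)$.

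\textbf{Step 1: (VI).} We claim there is a neighbourhood $V\subseteq U$ of $x$ with $\D(y)\subseteq\D(x)$ for all $y\in V\cap S_X$. Choose $V$ convex and symmetric about $x$, so that $2x-y\in U$ whenever $y\in V$. Fix $y\in V\cap S_X$ and $g\in\D(y)$. For small $t>0$ the point $z=y+t(y-x)$ lies in $U$. It satisfies $f_k(z)=1+(1+t)(f_k(y)-1)\le 1$, so $z\in B_X$ by \eqref{eq: Delta-QP}. Then $g(z)\le 1$, which gives $g(y-x)\le 0$. By convexity $g(x)\ge g(y)=1$, hence $g(x)=1$ and $g\in\D(x)$. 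The case $y=x$ is trivial, so (VI) follows by Lemma~\ref{lemma: VI iff D(y)}.

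\textbf{Step 2: ($\Delta$).} We claim $\D(x)=\conv\{f_1,\dots,f_n\}$. Let $g\in\D(x)$ and take $v$ with $f_k(v)\le 0$ for all $k$. For small $t$, $x+tv\in U$ and satisfies the defining inequalities, so $x+tv\in B_X$. Hence $g(x+tv)\le 1$, i.e.\ $g(v)\le 0$. By Farkas' lemma, applied in $X^*$ to the finite family (through the finite-dimensional quotient by $\bigcap_k\ker f_k$, whose annihilator-type argument forces $g\in\spann\{f_k\}$ first), $g=\sum_k\lambda_k f_k$ with $\lambda_k\ge0$. Evaluating at $x$ gives $\sum_k\lambda_k=1$. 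So $\ext\D(x)\subseteq\{f_1,\dots,f_n\}$ is finite.

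\emph{Necessity.} Assume (VI) and ($\Delta$), fix $x\in S_X$, and let $\ext\D(x)=\{f_1,\dots,f_n\}$. Take an open convex $V$ as in Lemma~\ref{lemma: VI iff D(y)}, so $\D(y)\subseteq\D(x)$ for $y\in V\cap S_X$. Let $U=\R^+ (V\cap S_X)$ intersected with a small ball around $x$, shrunk if needed to be convex and a neighbourhood of $x$. We want
\[ B_X\cap U=\{y\in U\colon f_k(y)\le 1,\ k=1,\dots,n\}. \]
The inclusion $\subseteq$ is clear. For the converse, let $y\in U$ with $\|y\|>1$ and set $y'=y/\|y\|\in V\cap S_X$. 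Pick $g\in\ext\D(y')$. By Fact~\ref{fact: basic D(x)}\ref{i: ext D(y)}, $g\in\ext\D(x)$, so $g=f_k$ for some $k$. Then $f_k(y)=\|y\|>1$, which proves the reverse inclusion.

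The main obstacle is making $U$ a genuine neighbourhood with the right radial structure. This holds because the map $y\mapsto y/\|y\|$ is continuous near $x$, so the preimage of the relatively open set $V\cap S_X$ is open. The second delicate point is the Farkas step in Step~2, which requires care in the infinite-dimensional setting.
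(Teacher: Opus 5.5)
Your proof is correct. Only the sufficiency direction differs genuinely from the paper's.

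For necessity you do what the paper does. You take $V$ from Lemma~\ref{lemma: VI iff D(y)}, use that $\R^+(V\cap S_X)$ is a neighbourhood of $x$, and combine $\D(y')\subseteq\D(x)$ with the finiteness of $\ext\D(x)$. The paper writes this as $\|y\|=\max_{f\in\ext\D(x)}f(y)$ via $\D(x)=\conv(\ext\D(x))$; your $\ext\D(y')\subseteq\ext\D(x)$ is the same idea.

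For sufficiency, the paper first passes through Lemma~\ref{lemma: DeltaQP iff locally finite faces} to a local true-face picture: near $x$, $S_X$ is covered by finitely many true faces through $x$, and no other true face meets the neighbourhood. Property (VI) is then immediate. Property ($\Delta$) follows because the functionals inducing true faces form a boundary $\B$ with $\D(x)\cap\B$ finite, via Fact~\ref{fact: Delta iff B cap D finite}. That fact in turn rests on the quoted Lemma~\ref{lemma: boundary VS extreme points} and Milman's converse of the Krein--Milman theorem.

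You instead work directly with the defining inequalities. Pushing $y$ outward along the ray from $x$ gives $\D(y)\subseteq\D(x)$ in one line, and Farkas' lemma gives the exact identity $\D(x)=\conv\{f_1,\dots,f_n\}$. Your argument is self-contained: no true faces, no boundary lemma, no appeal to polyhedrality. It also shows that every (normalised) $\Delta$-QP representation at $x$ contains all of $\ext\D(x)$. The paper's route is shorter given the machinery already in place, and it matches the true-face viewpoint used throughout Section~\ref{sec: tiling}.

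The Farkas step needs less care than you fear, but state it cleanly. Testing $\pm v$ for $v\in\bigcap_k\ker f_k$ gives $\bigcap_k\ker f_k\subseteq\ker g$. So $g$ and the $f_k$ all factor through $v\mapsto(f_1(v),\dots,f_n(v))\in\R^n$, and the finite-dimensional Farkas lemma applies verbatim.

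Minor points: take $V$ open, so that $y\in\inte U$ and Lemma~\ref{lemma: VI iff D(y)} applies. The symmetry of $V$ about $x$ is never used. Finally, $g(x)\geq g(y)$ follows from $g(y-x)\leq 0$, not from convexity.
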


\begin{proof} If $X$ is (VI)-polyhedral, by Lemma~\ref{lemma: VI iff D(y)} for each point $x\in S_X$ there is a neighbourhood $U$ of $x$ such that $\D(y)\subseteq \D(x)$ for all $y\in U\cap S_X$. Then, by Fact~\ref{fact: basic D(x)}\ref{i: ext D(x)=}, $B_0\coloneqq \ext B_{X^*}\cap \D(x)= \ext \D(x)$, which is finite by ($\Delta$). Thus, the Krein--Milman theorem implies $\D(x)= \conv (\ext \D(x))$. So, for $y\in U\cap S_X$,
\[ 1= \max_{f\in \D(y)} f(y)\leq \max_{f\in \D(x)} f(y)\leq \max_{f\in B_0} f(y)\leq 1. \]
Hence, for all $y\in \R^+ (U\cap S_X)$, we have that
\[ \|y\|= \max_{f\in B_0} f(y). \]
As $\R^+ (U\cap S_X)$ is a neighbourhood of $x$, this shows that $x$ is a $\Delta$-QP point.

Vice-versa, assume that condition \ref{i: finite true faces} in Lemma~\ref{lemma: Delta-QP pts iff VI+Delta} holds. Then, $X$ is clearly (VI)-polyhedral. Moreover, letting $\B$ be the set of all the functionals inducing the true faces of $B_X$, condition \ref{i: finite true faces} implies that $\B$ is a boundary and $\D(x)\cap \B$ is finite for all $x\in S_X$. By Fact~\ref{fact: Delta iff B cap D finite}, $X$ has ($\Delta$).
\end{proof}

\begin{fact}[\cite{fonf_someproperties}]\label{fact: ctble boundary and IV} Let $(X, \n)$ be a normed space with a countable boundary. Then, $X$ admits an equivalent norm $\nn$ such that $(X,\nn)$ is \textup{(IV)}-polyhedral and admits a countable boundary. Further, the norm $\nn$ can be chosen to approximate $\n$.
\end{fact}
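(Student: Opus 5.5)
The natural candidate is the classical Fonf-type smoothing-by-polyhedral-norm argument. Fix a countable boundary $\{f_n\}_{n=1}^\infty\subseteq B_{X^*}$ of $(X,\n)$, and a sequence $\e_n\searrow 0$ with $\e_n>0$. I would define
\[ \vertt{x}\coloneqq \sup_{n\in\N}(1+\e_n)\,|f_n(x)|. \]
Since $\{f_n\}$ is a boundary, $\|x\|=\max_n |f_n(x)|$ (the supremum is attained). Hence
\[ \|x\|\leq \vertt{x}\leq (1+\e_1)\|x\|, \]
so $\nn$ is an equivalent norm. Choosing $\e_1$ small makes it approximate $\n$.

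The claim is that the set $\B\coloneqq\{\pm(1+\e_n)f_n\}_{n}$ is $1$-norming for $\nn$, is a boundary, and satisfies property (IV). Then Lemma~\ref{lemma: V poly for bdry} gives that $(X,\nn)$ is (IV)-polyhedral, with the countable boundary $\B$.

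First, $\B\subseteq B_{(X,\nn)^*}$ trivially, and it is $1$-norming by definition.

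Next, the supremum is attained. Fix $x$ with $\vertt{x}=1$. We have $|f_n(x)|\leq\|x\|$, and $\|x\|\leq 1$, so
\[ (1+\e_n)|f_n(x)|\leq (1+\e_n)\|x\|. \]
Also $\|x\|\geq 1/(1+\e_1)>0$, so $(1+\e_n)\|x\|$ tends to $\|x\|<1$ unless $\|x\|=1$. In the edge case $\|x\|=1$, pick $m$ with $|f_m(x)|=1$. Then $\vertt{x}\geq 1+\e_m>1$, a contradiction. So $\|x\|<1$, and only finitely many $n$ can have $(1+\e_n)|f_n(x)|$ close to $1$. This shows the supremum is a maximum, so $\B$ is a boundary.

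The main step is (IV) for $\B$. Take a $w^*$-cluster point $g$ of $\B$. Then $g$ is a cluster point of $\pm(1+\e_{n_\alpha})f_{n_\alpha}$ along a net with $n_\alpha\to\infty$, since clustering at a fixed index gives an isolated point. Because $\e_n\to 0$, we get $g=\lim \pm f_{n_\alpha}$. Hence for $x\in S_{(X,\nn)}$,
\[ |g(x)|\leq \|x\|<1, \]
by the strict inequality established above. This is exactly (IV).

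The only delicate point is that strict inequality $\|x\|<\vertt{x}$, which relies on the original boundary attaining the norm. I expect no further obstacle. The countability of $\B$ is what allows the weights $\e_n\to0$ to be assigned injectively.
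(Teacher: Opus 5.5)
Your proof is correct and follows essentially the same route as the paper: the same renorming $\vertt{x}=\sup_n(1+\e_n)|f_n(x)|$, the same key estimate (your $\|x\|\leq \vertt{x}/(1+\e_k)<\vertt{x}$ with $\|x\|=|f_k(x)|$ is what the paper writes as $|g_n(x)|\leq\frac{1+\e_n}{1+\e_k}|g_k(x)|$ for $n>k$), which shows that $\{\pm(1+\e_n)f_n\}$ is a boundary with (IV), and then Lemma~\ref{lemma: V poly for bdry}. The cluster-point step should be phrased more carefully than the ``isolated point'' remark: a $w^*$-neighbourhood of $g$ can avoid any finite subset of $\B\setminus\{g\}$, so every net in $\B\setminus\{g\}$ converging to $g$ eventually has indices beyond any fixed $N$.
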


For a Banach space $X$, the result is \cite{fonf_someproperties}*{Theorem~3}; see \cite{InfPoly}*{Theorem~3.3} for a proof. If $X$ is a Banach space, the assertion on the countable boundary is a consequence of the first part, by the structural theorem and the fact that Banach spaces with a countable boundary are separable. On the other hand, the argument in \cite{InfPoly}*{Theorem 3.3} explicitly constructs a countable boundary for the (IV)-polyhedral norm; we briefly recall their proof.

\begin{proof} Suppose that $\{\pm f_n\}_{n=1}^\infty$ is a countable boundary for $X$ and fix a strictly decreasing sequence $(\e_n)_{n=1}^\infty$ that converges to $0$. Define functionals $g_n\coloneqq (1+\e_n) f_n$ and
\[ \vertt{x}\coloneqq \sup_{n\in \N} |g_n(x)|. \]
Take any $x\in X$ and find $k\in \N$ with $\|x\|= |f_k(x)|$; thus, $|f_n(x)|\leq |f_k(x)|$ for all $n\in \N$. Hence, when $n> k$ we get
\[ |g_n(x)|=\frac{1+ \e_n}{1+ \e_k}(1+ \e_k) |f_n(x)|\leq \frac{1+ \e_n}{1+ \e_k}(1+ \e_k) |f_k(x)|= \frac{1+ \e_n}{1+ \e_k} |g_k(x)|. \]
This shows at once that $\{\pm g_n\}_{n=1}^\infty$ is a boundary for $(X,\nn)$ and that it has (IV). By Lemma~\ref{lemma: V poly for bdry}, $(X,\nn)$ is (IV)-polyhedral. The approximation claim is immediate, because by construction $\n\leq \nn\leq (1+\e_1)\n$.
\end{proof}

A particular case when the previous result applies is that of normed spaces with countable dimension. In fact, we have the following result.

\begin{lemma}[\cite{devillefonfhajek}]\label{lemma: ctble dim => IV} Every normed space $X$ of countable dimension admits a \textup{(IV)}-polyhedral equivalent norm (with a countable boundary). Further, the set of \textup{(IV)}-polyhedral norms is dense in the set of all norms.
\end{lemma}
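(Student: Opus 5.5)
The plan is to construct a countable $1$-norming set with property (IV) and then apply Lemma~\ref{lemma: V poly for bdry} and Fact~\ref{fact: ctble boundary and IV}. By Markushevich's result recalled in Section~\ref{sec: prelim}, write $X=\spann\{e_n\}_{n=1}^\infty$ for a biorthogonal system $\{e_n;e^*_n\}_{n=1}^\infty$. Set $X_n\coloneqq\spann\{e_1,\dots,e_n\}$ and let $P_n\colon X\to X_n$ be the (generally unbounded) finite-rank projection $x\mapsto\sum_{k\leq n}e^*_k(x)e_k$. Since every $e^*_k$ is continuous, $P_n$ is in fact bounded on $X$. Every $x\in X$ lies in some $X_n$, so $P_mx=x$ for all $m\geq n$.

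First, produce an equivalent norm with a countable boundary. For each $n$, the unit sphere of $(X_n,\n)$ is compact, so we can pick a countable set $\{\varphi_{n,j}\}_j\subseteq S_{X_n^*}$ that is dense in $S_{X_n^*}$. Consider the functionals $\varphi_{n,j}\circ P_n\in X^*$. These do not have norm $\leq 1$, but we only need them to be bounded, and then we rescale. Define
\[ \vertt{x}\coloneqq \max\Bigl\{\|x\|',\ \sup_{n,j}\ \delta_n\,|\varphi_{n,j}(P_nx)|\Bigr\}, \]
with $\delta_n>0$ small enough that $\delta_n\|P_n\|\leq 2^{-n}$. This is not quite what we want, since the sup is not attained in general. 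So instead I use a construction closer to that of Theorem~\ref{thm: V no true faces}. For each $n$ choose a finite-dimensional polyhedral approximation: pick a finite set $F_n\subseteq X^*$ of functionals of the form $\psi\circ P_n$ such that $\max_{f\in F_n}|f(x)|$ is within a factor $(1+\e_n)$ of $\|x\|$ on $X_n$, and such that $\sup_{f\in F_n}\|f\|\leq C$ uniformly in $n$. I then scale by weights $(1+\e_n)^{-1}$, with $\e_n\searrow 0$, to force the norm on $X_n$ to be computed by $F_1\cup\dots\cup F_m$ for some finite $m=m(n)$. Applying a rescaling trick as in the proof of Fact~\ref{fact: ctble boundary and IV} then gives property (IV).

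The main obstacle is the uniform bound on $\|\psi\circ P_n\|$, because $\|P_n\|$ may grow. To handle it, I would not use $P_n$ directly but instead Hahn--Banach extensions. Fix a dense sequence $(z_k)$ in $X$, which exists since $X$ is separable because it has countable dimension. For each $k$ pick $f_k\in S_{X^*}$ with $f_k(z_k)=\|z_k\|$. Then $\{\pm f_k\}$ is $1$-norming: for $x\in S_X$ and $z_k\to x$ we get $f_k(x)\to 1$. It is not necessarily a boundary, though.

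To upgrade this $1$-norming set to a boundary on each finite-dimensional $X_n$, I would argue inductively. A polytope approximating $B_{X_n}$ has finitely many facets. I would extend the facet functionals by Hahn--Banach, at norm $1+\e_n$ relative to the original norm, and add them with weights decreasing to the original norm, exactly as in the construction of $\cal C_n$ and the equality \eqref{eq: values in C}. This yields a countable family $\cal C$ such that, for $x\in X_n$, $\{f(x)\colon f\in\cal C\}$ equals its values over a finite set union a tail whose sup is strictly below the norm. Hence $\cal C$ is a countable boundary satisfying (IV). Lemma~\ref{lemma: V poly for bdry} then gives (IV)-polyhedrality, and approximating an arbitrary starting norm by choosing $\e_n$ small gives density. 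The delicate verification is that later extensions never exceed the new norm on earlier $X_n$. This is where the strict decrease of the weights is used.
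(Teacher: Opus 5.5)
Once one quantitative point is made explicit, your final paragraph is a correct proof, and it takes a genuinely different route from the paper's.

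\textbf{The paper's route.} It argues in three separate steps:
\begin{enumerate}
\item It approximates $B_X$ by a polytope using the Deville--Fonf--H\'ajek theorem on polytope approximation of bounded convex bodies in spaces of countable dimension, and symmetrises via $P\cap -P$.
\item It notes that a polyhedral norm on $X=\bigcup_n X_n$ has a countable boundary, namely the Hahn--Banach extensions of the finite boundaries of the $X_n$.
\item It applies Fonf's rescaling from Fact~\ref{fact: ctble boundary and IV}.
\end{enumerate}

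\textbf{Your route.} You do all three steps at once. Take symmetric polytopes $R_n$ with $(1+\e_n)^{-1}B_{X_n}\subseteq R_n\subseteq B_{X_n}$. Let $\Phi_n\subseteq X^*$ consist of Hahn--Banach extensions of their facet functionals, each of norm at most $1+\e_n$. Put $\mathcal{C}=\bigcup_n w_n\Phi_n$ with $w_n\searrow 1$.

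\textbf{The point you must make explicit.} Strict decrease of the weights is not enough by itself. The $\e_m$ must be small compared with the gaps between the weights, namely
\[ s_n\coloneqq \sup_{m>n} w_m(1+\e_m) < w_n \qquad (n\in\N). \]
For instance, take $w_n=1+\delta 2^{-n}$ and choose $\e_m$ with $w_m(1+\e_m)\leq 1+\tfrac32\delta 2^{-m}$. With this in place:
\begin{enumerate}
\item For $x\in X_n$ one has $\max_{\phi\in\Phi_n}w_n\phi(x)\geq w_n\|x\|$, while every functional from a stage $m>n$ takes a value at most $s_n\|x\|$ at $x$.
\item Hence $\vertt{x}$ is attained on the finite set $\bigcup_{m\leq n}w_m\Phi_m$. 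So $\mathcal{C}$ is a countable boundary and $\nn$ is polyhedral on each $X_n$.
\item Every $w^*$-cluster point $f$ of $\mathcal{C}$ is a cluster point of every tail, so $f(x)\leq s_n\|x\|<\vertt{x}$. This is property (IV) for $\mathcal{C}$.
\item Lemma~\ref{lemma: V poly for bdry} then concludes, and $\|\cdot\|\leq\vertt{\cdot}\leq(1+\delta)\|\cdot\|$ gives density.
\end{enumerate}

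\textbf{Comparison.} Your weights do the job of Fonf's factors $1+\e_n$, but applied to whole finite stages. This makes your argument self-contained and elementary, with no appeal to the general approximation theorem of Deville--Fonf--H\'ajek. The paper's modular route is shorter given the citations, and it isolates two facts (countable boundaries of polyhedral norms in countable dimension, and Fonf's trick) that it reuses elsewhere.

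\textbf{Discard the earlier drafts.} In your second paragraph the weights $(1+\e_n)^{-1}$ increase to $1$. Then, at $x\in X_n$, the contributions of the stages $m>n$ creep up towards $\|x\|$, so nothing forces the supremum to be attained at a finite stage. That is exactly the failure the decreasing weights are meant to prevent. Also, the Markushevich basis, the projections $P_n$ (and the worry about $\|P_n\|$), and the $1$-norming set $\{\pm f_k\}$ play no role in the final argument. Any increasing sequence of finite-dimensional subspaces exhausting $X$ suffices.
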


\begin{proof} To begin with, the norm of $X$ can be approximated by a polyhedral norm\footnote{This also follows from \cite{dantashajekrusso}*{Theorem A}.}. In fact, it is proved in \cite{devillefonfhajek}*{Theorem 2.2} that in normed spaces with countable dimension, every bounded convex body can be approximated by polytopes. Thus, $B_X$ can be approximated by a polytope $P$. By symmetry, $-P$ also approximates $B_X$, so $P\cap-P$ is a symmetric polytope that approximates $B_X$. Thus, the polyhedral norm $\n$ whose unit ball is $P\cap-P$ approximates the norm of $X$. Finally, $(X,\n)$ admits a countable boundary by \cite{devillefonfhajek}*{Proposition~2.1}, and Fact~\ref{fact: ctble boundary and IV} applies.

The proof of the fact that polyhedral norms on spaces of countable dimension have a countable boundary is so short that we also recall it here. Write $X= \spann\{e_n\}_{n= 1}^\infty$, and let $X_n\coloneqq \spann\{e_1,\dots, e_n\}$. Then, $X_n$ is a finite-dimensional polyhedral space, so it has a finite boundary $\B_n$. By the Hahn--Banach theorem, we can assume that $\B_n\subseteq B_{X^*}$ (rather than $B_{X_n^*}$); thus $\bigcup_{n= 1}^\infty \B_n$ is a countable boundary for $X$.
\end{proof}

\subsection{Classification of polyhedral normed spaces}
We begin by recalling the classification of polyhedrality notions that we considered in Section~\ref{sec: prelim} and we also give a more detailed reference to the literature.

\begin{theorem} The following relationships among the different notions of polyhedrality hold:
\begin{enumerate}
    \item \label{isometric implications DP} $\textup{(IV)}\implies\textup{(V)}\implies\textup{(VI)}\implies\textup{(K)}$;
    \item \label{isometric implications FV} $\textup{(IV)}\iff\textup{(V)}+(\Delta)$.
\end{enumerate}
\end{theorem}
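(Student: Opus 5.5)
The plan is to prove the two items separately, relying on Krein--Milman, $w^*$-compactness of $B_{X^*}$, and the earlier lemmas of this appendix. Two cautions shape it. The proof of Lemma~\ref{lemma: few extreme points}\ref{extreme: (VI)} uses that $2$-dimensional balls are polygons, so it cannot be used for (VI)$\implies$(K) without circularity. The two-dimensional analysis below is therefore the step I expect to require the most care.

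\emph{Item~\ref{isometric implications DP}.} For (IV)$\implies$(V), suppose $\sup\{f(x)\colon f\in\ext B_{X^*}\setminus\D(x)\}=1$ for some $x\in S_X$. Then there are $f_n\in\ext B_{X^*}$ with $f_n(x)<1$ and $f_n(x)\nearrow 1$. These values are strictly increasing, so the $f_n$ are pairwise distinct. By $w^*$-compactness the set $\{f_n\}$ has a $w^*$-cluster point $f\in(\ext B_{X^*})'$, and $f(x)=1$, contradicting (IV).

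For (V)$\implies$(VI), fix $x\in S_X$ and let $\delta\coloneqq 1-\sup\{f(x)\colon f\in\ext B_{X^*}\setminus\D(x)\}>0$. Take $y\in S_X$ with $\|y-x\|<\delta/2$ and $f\in\ext\D(y)\subseteq\ext B_{X^*}$ (Fact~\ref{fact: basic D(x)}). If $f\notin\D(x)$, then $f(y)\leq f(x)+\delta/2\leq 1-\delta/2<1$, which is absurd. Hence $\ext\D(y)\subseteq\D(x)$. Since $\D(x)$ is $w^*$-closed and convex, Krein--Milman gives $\D(y)\subseteq\D(x)$, and Lemma~\ref{lemma: VI iff D(y)} yields (VI).

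For (VI)$\implies$(K), first note that (VI) passes to subspaces, because the segment condition restricts. By Klee's theorem \cite{Klee_2dim}*{Theorem~4.7}, it then suffices to show that a $2$-dimensional (VI) space $Y$ has a polygonal ball. Take $x\in S_Y$ and its neighbourhood $V$, which we may take to be a small ball. Then $S_Y\cap V$ is an arc through $x$ in which every point $y$ is joined to $x$ by a segment $[x,y]\subseteq S_Y$. So $S_Y\cap V$ is the union of two segments issuing from $x$, and it contains no extreme point other than possibly $x$. Compactness of $S_Y$ then gives finitely many extreme points, so $B_Y$ is a polygon.

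\emph{Item~\ref{isometric implications FV}.} The direction (IV)$\implies$(V) was proved above. For (IV)$\implies$($\Delta$), suppose $\ext\D(x)=\D(x)\cap\ext B_{X^*}$ is infinite. It has a $w^*$-cluster point $f$, which lies in $\D(x)$ because $\D(x)$ is $w^*$-closed. Thus $f\in(\ext B_{X^*})'$ and $f(x)=1$, contradicting (IV).

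Conversely, assume (V) and ($\Delta$), and let $f\in(\ext B_{X^*})'$ with $f(x)=1$ for some $x\in S_X$. Every $w^*$-neighbourhood $\{g\colon g(x)>1-\delta\}$ of $f$, where $\delta$ is given by (V) at $x$, contains infinitely many extreme points. By (V), all of them lie in $\D(x)$, hence in $\ext\D(x)$. This contradicts the finiteness of $\ext\D(x)$ given by ($\Delta$): more precisely, intersecting with smaller neighbourhoods of $f$ shows that $f$ is a cluster point of a finite set, which is impossible.
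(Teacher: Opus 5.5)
Your proposal is correct. It necessarily takes a different route from the paper, because the paper proves nothing here: it cites \cite{DP-Rocky}*{Theorem~1} for item~(a) and \cite{InfPoly}*{Observation~3.5} for item~(b), and only asserts that completeness is not used. Your direct arguments make that assertion checkable. They use only $w^*$-compactness of $B_{X^*}$, Krein--Milman, Fact~\ref{fact: basic D(x)}, Lemma~\ref{lemma: VI iff D(y)}, and planar geometry. In the converse of~(b), your first sentence already yields the contradiction: the $w^*$-open set $\{g\colon g(x)>1-\delta\}$ would contain infinitely many points of the finite set $\ext\D(x)$, so the extra step with smaller neighbourhoods is unnecessary.

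Two comments on (VI)$\implies$(K).

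First, the planar step needs one more sentence. Rather than claiming that $V\cap S_Y$ is an arc, take an open arc $A\subseteq V\cap S_Y\cap B(x,1)$ around $x$, which exists because $S_Y$ is a Jordan curve. For $y\in A\setminus\{x\}$, the segment $[x,y]\subseteq S_Y$ is one of the two arcs of $S_Y$ joining $x$ and $y$. It is not the arc through $-x$, since $x,y$ are linearly independent and so $-x\notin[x,y]$. Hence $[x,y]$ is the sub-arc of $A$, and the points of $A$ on each side of $x$ are nested along a ray from $x$. This is exactly your ``two segments'' claim.

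Second, Klee's deep theorem can be avoided. Once your planar argument shows that $2$-dimensional (VI)-spaces have polygonal balls, the proof of Lemma~\ref{lemma: VI iff D(y)}\ref{i: segment in sphere}$\implies$\ref{i: segment after y} is no longer circular. That proof never uses extremality of $x$, so it shows the following: in a finite-dimensional (VI)-space $Z$, each $x\in S_Z$ has a neighbourhood containing no extreme point of $B_Z$ other than $x$. Compactness of $S_Z$ then gives finitely many extreme points, so $B_Z$ is a polytope. With this change your route becomes fully elementary, whereas as written it is self-contained except for the appeal to Klee.
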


Item~\ref{isometric implications DP} was proved in \cite{DP-Rocky}*{Theorem 1}, where more properties are also considered. In the statement of \cite{DP-Rocky}*{Theorem 1}, the implications of \ref{isometric implications DP} are (2)$\implies$(4)$\implies$(5)$\implies(7)$. Instead, \ref{isometric implications FV} is stated in \cite{InfPoly}*{Observation~3.5}; the statement given there is for Banach spaces only, but it is easy to check that the argument doesn't require completeness.

Finally, let us comment on some other polyhedrality notions that we didn't investigate in our paper. Consider the following properties of a normed space $X$:
\begin{enumerate}
    \item[(I)] $(\ext B_{X^*})'\subseteq \{0\}$;
    \item[(II)] $(\ext B_{X^*})'\subseteq r B_X$, for some $r<1$;
    \item[(III)] $(\ext B_{X^*})'\subseteq \inte B_X$.
\end{enumerate}

By definition, it is clear that (I)$\implies$(II)$\implies$(III)$\implies$(IV). If $\hat X$ denotes the completion of a normed space $X$, the duals $\hat X^*$ and $X^*$ are canonically isometric; further the $w^*$-topologies induced on them by $\hat X$ and $X$ coincide on bounded sets. As the above properties are only defined in terms of the $w^*$-topology on the dual ball, it follows that for $\textup{j}\in\{ \textup{I, II, III}\}$, a normed space is (j)-polyhedral if and only if its completion is. On the other hand, our results in Section~\ref{sec: counterexamples} give several examples of (IV)-polyhedral normed spaces whose completion is not even (K)-polyhedral. Just to name one, the space $\ell_{1,0}$ admits a (IV)-polyhedral norm by Lemma~\ref{lemma: ctble dim => IV}, but its completion $\ell_1$ doesn't admit any polyhedral norm. Thus, for $\textup{j}\in\{ \textup{I, II, III}\}$ the theory of (j)-polyhedral normed spaces is just obtained by applying the results for complete spaces to the completion. We limit ourselves to a sample result here, which should be compared to Remark~\ref{rmk: bdry of IV}.

\begin{fact}\label{fact: bdry III} If $\hat X$ is polyhedral, $X$ admits a minimal boundary, which has cardinality $\textup{dens}(X)$. In particular, this is the case if $X$ is \textup{(III)}-polyhedral.    
\end{fact}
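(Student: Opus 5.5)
The plan is to transfer everything from the completion $\hat X$ to $X$ through the canonical isometric identification $X^*=\hat X^*$, under which $\D_X(x)=\D_{\hat X}(x)$ for every $x\in S_X$.

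First, $\hat X$ is a polyhedral Banach space, so Fonf's structural theorem applies to it. Its minimal boundary $\B_0=\wexp(B_{\hat X^*})$ consists of the functionals inducing the true faces of $B_{\hat X}$, these faces cover $S_{\hat X}$, and $|\B_0|=\textup{dens}(\hat X)=\textup{dens}(X)$.

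Second, I would show that $\B_0$, viewed inside $X^*$, is a boundary for $X$ and that each $f\in\B_0$ induces a true face of $B_X$. This is exactly the argument of Remark~\ref{rmk: true faces from hat X}. Every $x\in S_X$ lies in some true face of $B_{\hat X}$, so some $f\in\B_0$ satisfies $f(x)=1$. Moreover, $S_X$ is dense in $S_{\hat X}$. Hence the relative interior of $\Gamma_f=B_{\hat X}\cap f^{-1}(1)$ meets $S_X$, and its trace on $X$ has non-empty relative interior in $f^{-1}(1)\cap X$.

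Third, I would prove minimality. Since $X$ is a subspace of the polyhedral space $\hat X$, it is (K)-polyhedral, so Proposition~\ref{prop: poly VS smooth} applies. A point $x$ in the relative interior of the true face of $B_X$ induced by $f$ then satisfies $\D_X(x)=\{f\}$. Thus every $f\in\B_0$ is $w^*$-exposed in $B_{X^*}$, and condition \ref{bdry: singleton} of Fact~\ref{fact: bdry} holds. That fact then gives that $\B_0=\wexp(B_{X^*})$ is the minimal boundary of $X$. Because the functionals are literally the same under the identification $X^*=\hat X^*$, its cardinality is still $\textup{dens}(X)$.

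Finally, for the clause about (III), I would use the observation preceding the statement. Property (III) depends only on the $w^*$-topology on the dual ball, so $X$ is (III)-polyhedral if and only if $\hat X$ is. Since (III)$\implies$(IV)$\implies$(K), the completion $\hat X$ is then polyhedral, and the first part applies.

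The only point needing care is the density step in the second paragraph: an open subset of $S_{\hat X}$ must meet $S_X$ in a set that is relatively open in the trace of the hyperplane on $X$. This is routine, so I do not expect a genuine obstacle.
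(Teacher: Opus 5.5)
Your proposal is correct and follows essentially the same route as the paper. The paper also reduces everything to Remark~\ref{rmk: true faces from hat X}, identifies the minimal boundary of $\hat X$ with that of $X$, obtains the cardinality $\textup{dens}(\hat X)=\textup{dens}(X)$ from Fonf's theorem, and handles the (III) clause by noting that (III) passes to the completion; your explicit minimality check via Proposition~\ref{prop: poly VS smooth} and Fact~\ref{fact: bdry} simply spells out the step the paper compresses into ``in other words''.
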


\begin{proof} This is just a reformulation of Remark~\ref{rmk: true faces from hat X}. In fact, as we noted there, $S_X$ is covered by the true faces obtained intersecting the true faces of $S_{\hat X}$ with $S_X$. In other words, the minimal boundary of $\hat X$ is also the minimal boundary for $X$. Further, this minimal boundary has cardinality $\textup{dens}(\hat X)= \textup{dens}(X)$, by \cite{fonfstruttnew}*{Theorem 1.4}.
\end{proof}

\medskip
\noindent\textbf{Acknowledgements.} A substantial part of this research was carried out during the second author's successive visits to Universit\`{a} Cattolica del Sacro Cuore, Politecnico di Milano, and the University of Innsbruck, whose hospitality and support are gratefully acknowledged.

\medskip
\noindent\textbf{Funding information.} The research of C.A.~De Bernardi has been partially supported by the GNAMPA (INdAM -- Istituto Nazionale di Alta Matematica) and by the MICINN project PID2020-112491GB-I00 (Spain). The research of H. Del R\'{i}o was partially supported by Grant PRE2022-103590, funded by MICIU/AEI/ 10.13039/ 501100011033 and by ESF+, by the grant PID2025-167660NB-I00 funded by MICIU/AEI/10.13039/501100011033 and ERDF/EU, by the "María de Maeztu" Excellence Unit IMAG, funded by MICIU/AEI/10.13039/501100011033 under reference CEX2020-001105-M and by Junta de Andaluc\'ia, grant FQM-0185.  The research of T.~Russo and J.~Somaglia has been partially supported by the GNAMPA (INdAM -- Istituto Nazionale di Alta Matematica). The research of T.~Russo was funded in part by the Austrian Science Fund (FWF) 10.55776/PAT1653825. For open access purposes, the authors have applied a CC BY public copyright license to any author accepted manuscript version arising from this submission.


\end{document}